\documentclass{amsart}
\RequirePackage{float}
\RequirePackage{graphicx}
\RequirePackage{amsmath, amsthm, amssymb, amsfonts}
\RequirePackage[colorlinks=true]{hyperref}
\hypersetup{urlcolor=blue, citecolor=blue}
\RequirePackage{subcaption}
\RequirePackage{indentfirst}


\newenvironment{Eq}{\begin{equation}\begin{aligned}}{\end{aligned}\end{equation}\ignorespacesafterend}
\newenvironment{Eq*}{\begin{equation*}\begin{aligned}}{\end{aligned}\end{equation*}\ignorespacesafterend}
\numberwithin{equation}{section}
\newtheorem{theorem}{Theorem}[section]

\newtheorem{lemma}[theorem]{Lemma}
\newtheorem{claim}[theorem]{Claim}
\theoremstyle{remark}
\newtheorem{remark}{Remark}[section] 
\theoremstyle{definition}
\newtheorem{definition}{Definition}

\DeclareMathOperator{\supp}{supp}

\DeclareMathOperator{\fd}{d}	\renewcommand{\d}{\fd}

\newcommand{\SR}{{\mathbb{R}}}
\newcommand{\SN}{{\mathbb{N}}}
\newcommand{\SZ}{{\mathbb{Z}}}

\renewcommand{\phi}{\varphi}

\newcommand{\Roma}[1]{\uppercase\expandafter{\romannumeral#1}}

\newcommand{\kl}[1]{\mathopen{}\left#1}
\newcommand{\kr}[1]{\right#1}
\renewcommand{\P}[2]{\ifthenelse{\equal{#1}{}}{\partial_{#2}}{\frac{\partial{#1}}{\partial {#2}}}}

\newcommand{\Th}[1]{Theorem \ref{#1}}
\newcommand{\Le}[1]{Lemma \ref{#1}}
\newcommand{\Se}[1]{Section \ref{#1}}
\newcommand{\Cl}[1]{Claim \ref{#1}}

\newcommand{\De}[1]{Definition \ref{#1}}
\newcommand{\Pt}[1]{Part \ref{#1}}

\newcounter{part0}[subsection]
\renewcommand{\part}[1][]{\noindent\refstepcounter{part0}{\bfseries Part \number\value{part0}:} #1.\par}
\newcounter{part1}[part0]
\newcommand{\subpart}[1][]{\noindent\stepcounter{part1}{\bfseries Part \number\value{part0}.\number\value{part1}:} #1.\par}
\newcounter{part2}[part1]

\title[Semilinear wave equations with the inverse-square potential]
{Long-time existence for semilinear wave equations with the inverse-square potential}

\author{Wei Dai}
\address{School of Mathematical Sciences\\ Zhejiang University\\ Hangzhou 310027,P.R.China}
\curraddr{Beijing International Center for Mathematical Research, Peking University, Beijing, China}
\email{daiw16@zju.edu.cn} 
\author{Daoyuan Fang}
\address{School of Mathematical Sciences\\ Zhejiang University\\ Hangzhou 310027,P.R.China}

\email{dyf@zju.edu.cn}

\author{Chengbo Wang}
\address{School of Mathematical Sciences\\ Zhejiang University\\ Hangzhou 310027,P.R.China}
\email{wangcbo@zju.edu.cn}
\urladdr{http://www.math.zju.edu.cn/wang}

\date{\today}

\begin{document}

\begin{abstract}
In this paper, we study the semilinear wave equations with the inverse-square potential. 
By transferring the original equation to a ``fractional dimensional" wave equation and analyzing the properties of its fundamental solution, we establish a long-time existence result, for sufficiently small,
spherically symmetric initial data.
Together with the previously known blow-up result, we determine the critical exponent which divides the global existence and finite time blow-up. Moreover, the sharp lower bounds of the lifespan are obtained, except for certain borderline case.
In addition, our technology allows us to handle 
an extreme case for the potential, which has hardly been discussed in literature.
\end{abstract}

\keywords{inverse-square potential; Strauss conjecture; blow up; global solution; lifespan}

\subjclass[2010]{35L71, 35B33, 35B44, 35B45, 35L05, 35L15}

\maketitle

\section{Introduction}

In this paper, 
we are interested in a kind of semilinear wave equations with the inverse-square potential
and small, spherically symmetric initial data, which has the form
\begin{Eq}\label{Eq:U_o}
\begin{cases}
\partial_t^2 U -\Delta U+Vr^{-2}U=|U|^p,  \quad r=|x|,~(t,x)\in\SR_+\times\SR^n;\\
U(0,x)=\varepsilon U_0(r),\quad U_t(0,x)=\varepsilon U_1(r);
\end{cases}
\end{Eq}
where $p>1$, 
$n\geq 2$, 
$0<\varepsilon\ll 1$ 
and $V\geq -(n-2)^2/4$ is a constant. 
We will study the long-time existence and global solvability of \eqref{Eq:U_o}. 
Specifically,  
setting $T_\varepsilon$ to be the lifespan of the solution to \eqref{Eq:U_o},
we want to know its relation with $n$, $V$, $p$ and $\varepsilon$.

When $V=0$, 
this problem reduces to the well known \emph{Strauss} conjecture, 
which has been extensively studied in a long history.
See, e.g., \cite{MR1481816}, \cite{MR1804518}, \cite{MR3247303}, \cite{MR3169791} and the references therein for more information. 
Let $p_S(n)$ be the positive root of $h_S(p;n)=0$,
where 
\begin{Eq*}
h_S(p;n):=(n-1)p^2-(n+1)p-2.
\end{Eq*}
From the early researches, 
under some natural requirements of $(U_0,U_1)$, 
it is known that
\begin{Eq*}
\kl\{\begin{aligned}
T_\varepsilon\approx &\varepsilon^{\frac{2p(p-1)}{h_S(p)}},&&\max(1,\frac{2}{n-1})<p<p_S;\\
\ln T_\varepsilon\approx& \varepsilon^{-p(p-1)},&&p=p_S;\\
T_\varepsilon=&\infty, && p>p_S.
\end{aligned}\kr.
\end{Eq*}
 Here and in what follows, 
we denote $x\lesssim y$ and $y\gtrsim x$ if $x\leq Cy$ for some $C>0$, 
independent of $\varepsilon$, 
which may change from line to line. 
We also denote $x\approx y$ if $x\lesssim y\lesssim x$.

When there exists a potential, i.e.,
 $V\neq 0$, 
the problem becomes much more complicated.
This is partly because that the inverse-square potential is in the same scaling as the wave operator,  
which means that it provides a comparable effect to the evolution of the solution.
Meanwhile, the extra singularity at the origin also needs to be taken care of.

The elliptic operator $-\Delta+V|x|^{-2}$ has been studied in several different equations related to physics and geometry, 
such as in heat equations (see, e.g., \cite{MR1760280}), in quantum mechanics (see, e.g., \cite{MR0397192}), in Schr\"odinger equations and wave equations.
Among others, the \emph{Strichartz} estimates for wave equations with the inverse square potential have been well-developed in many works.
Such result was firstly developed in \cite{MR1952384} for the wave equations with radial data.
Shortly afterwards, the radial requirement was removed by \cite{MR2003358}.
A decade later, the \emph{Strichartz} estimates with angular regularity were developed in \cite{MR3139408}.
Despite these results, we expect that these kind of estimates still have room to improve and generalize.

Turn back to the equation \eqref{Eq:U_o}. 
Note that the initial data of \eqref{Eq:U_o} are spherically symmetric,
which suggest that the solution $U$ is also spherically symmetric. 
Let $$A:=2+\sqrt{(n-2)^2+4V},~u(t,r):=r^{\frac{n-A}{2}}U(t,x).$$
A formal calculation shows that $u$ satisfies the equation
\begin{Eq}\label{Eq:u_o}
\begin{cases}
\partial_t^2 u -\Delta_Au=r^{\frac{(A-n)p+n-A}{2}}|u|^p,\quad (t,r)\in \SR_+^2,\\
u(0,x)=\varepsilon r^{\frac{n-A}{2}}U_0(r),\quad u_t(0,x)=\varepsilon r^{\frac{n-A}{2}}U_1(r),
\end{cases}
\end{Eq}
where $\Delta_A:=\partial_r^2+(A-1)r^{-1}\partial_r$.
When $A\in \SZ_+$, the operator $\Delta_A$ agrees with the $A$-dimensional Laplace operator (for radial functions),
from which we consider the parameter $A$ as the spatial ``dimension" for the equation after the transformation.

The blow-up result of \eqref{Eq:U_o} has been systematically considered in the previous paper \cite{MR4130094}  by the first author and his collaborators.
Here we define
\begin{Eq*}
h_F(p;n):=np-(n+2)
\end{Eq*}
with $p_F(n)$ be the root of $h_F(p;n)=0$, and use abbreviations
\begin{Eq*}
&p_d=p_d(A):=\frac{2}{A-1},\quad p_F=p_F((n+A-2)/2),\quad p_S=p_S(n),\\
&h_S=h_S(p;n),\quad h_F=h_F(p;(n+A-2)/2),
\end{Eq*} 
if these do not lead to ambiguity.
Then, under some requirements of initial data, there exists a constant $C=C(p;n,A)$ 
such that when $(3-A)(A+n-2)<8$, where $p_d<p_F<p_S$, we have
\begin{Eq*}
T_\varepsilon\leq 
\begin{cases}
C \varepsilon^{\frac{p-1}{h_F}},&p\leq p_d;\\
C\varepsilon^{\frac{2p(p-1)}{h_S}},&p_d<p<p_S;\\
\exp\kl(C \varepsilon^{-p(p-1)}\kr),&p=p_S.
\end{cases}
\end{Eq*}
When $(3-A)(A+n+2)=8$, where $p_d=p_F=p_S$, we have
\begin{Eq*}
T_\varepsilon\leq 
\begin{cases}
C \varepsilon^{\frac{p-1}{h_F}},&p<p_F;\\
\exp\kl(C \varepsilon^{-(p-1)}\kr),&p=p_F.
\end{cases}
\end{Eq*}
When $(3-A)(A+n+2)>8$, where $p_d>p_F>p_S$, we have
\begin{Eq*}
T_\varepsilon\leq 
\begin{cases}
C \varepsilon^{\frac{p-1}{h_F}},&p<p_F;\\
\exp\kl(C \varepsilon^{-(p-1)}\kr),&p=p_F.
\end{cases}
\end{Eq*}
This result suggests that two effects will impact the lifespan. 
For simplicity we call one \emph{Strauss} effect 
and the other \emph{Fujita} effect, since $p_S$ is the \emph{Strauss} exponent and $p_F$ is the \emph{Fujita} exponent. On the other hand, we remark that $p_F((n+A-2)/2)=p_G((n+A)/2)$, where $p_G(n)=\frac{n+1}{n-1}$ is the \emph{Glassey} exponent. 
The \emph{Glassey} exponent appears in the wave  equations with derivative nonlinearity $|\partial_t u|^p$, which suggests that there may exist some relation between the \emph{Glassey} conjecture  (see, e.g., \cite{W15}) and our problem.

For the existence part, there are also a few studies of \eqref{Eq:U_o}.
Using \emph{Strichartz} estimates, the global existence result was shown in  \cite{MR1952384, MR2003358} if
\begin{Eq*}
p\geq \frac{n+3}{n-1},\quad \frac{A-2}{2}>\frac{n-2}{2}-\frac{2}{p-1}+\max\kl\{\frac{1}{2p},\frac{1}{(n+1)(p-1)}\kr\}.
\end{Eq*}
Later, the result was further extended in \cite{MR3139408}, where  the global result in the radial case was obtained for $1+\frac{4n}{(n+1)(n-1)}<p<\frac{n+3}{n-1}$,
\begin{Eq*}
&V>\max\kl\{\frac{1}{(n-1)^2}-\frac{(n-2)^2}{4},\frac{n}{q_0}\kl(\frac{n}{q_0}-n+2\kr),\kl(\frac{n}{r_0}-n\kr)\kl(\frac{n}{r_0}-2\kr)\kr\},\\
&q_0=\frac{(p-1)(n+1)}{2},\qquad r_0=\frac{(n+1)(p-1)}{2p}.
\end{Eq*}
However, 
compared with the result of the problem without potential, in general,
it seems that the sharp result for \eqref{Eq:U_o}
could not be obtained by the \emph{Strichartz} estimates without weight.
On the other hand, there is also a gap between these results and the blow-up result we mentioned before.

Now,  
we are in a juncture to state our main results in this paper.
Firstly, we give the definition of the solution, 
and see \Se{Se:2} for further discussions. 
\begin{definition}\label{De:U_w}
We call $U$ is a weak solution of \eqref{Eq:U_o} in $[0,T]\times \SR^n$ 
if $U$ satisfies
\begin{Eq}\label{Eq:U_i}
\int_0^T\int_{\SR^n} |U|^p\Phi \d x\d t
=&\int_0^T\int_{\SR^n} U \kl(\partial_t^2-\Delta+\frac{V}{r^2}\kr)\Phi \d x\d t\\
&-\varepsilon\int_{\SR^n} (U_1\Phi(0,x)-U_0\partial_t\Phi(0,x))\d x,
\end{Eq}
for any $\Phi(t,x)\in \kl\{r^{\frac{A-n}{2}}\phi(t,x):\phi\in C_0^\infty((-\infty,T)\times \SR^n)\kr\}$. 
\end{definition}

For convenience we introduce the notations
\begin{Eq*}
p_m:=\frac{n+1}{n-1},\qquad 
p_M:=\begin{cases}\frac{n+1}{n-A}&n>A\\\infty&n\leq A\end{cases},\qquad 
p_t:=\frac{n+A}{n-1},\qquad
p_{conf}:=\frac{n+3}{n-1}.
\end{Eq*}
Then, we give the existence results for $A\in[2,3]$.
\begin{theorem}\label{Th:M_1}
Set $2\leq n$, 
$2\leq A\leq 3$ and $p_m<p<p_M$. 
Assume that the initial data satisfy
\begin{Eq}\label{Eq:ir_1}
\|r^\frac{n-A+2}{2}U_0'(r)\|_{L_r^\infty}+\|r^\frac{n-A}{2}U_0(r)\|_{L_r^\infty}+\|r^\frac{n-A+2}{2}U_1(r)\|_{L_r^\infty}<\infty,
\end{Eq} 
and supported in $[0,1)$, where $L_r^p$ stands for $L^p((0,\infty),\d r)$.
Then, there exists an $\varepsilon_0>0$ and a constant $c=c(p;n,A)$,
such that for any $0<\varepsilon<\varepsilon_0$, there is a weak solution $U$ of \eqref{Eq:U_o} in $[0,T_*)\times \SR^n$ which satisfies  
\begin{Eq*}
r^{\frac{n-A}{2}}U\in L_{loc;t,x}^\infty ([0,T_*)\times \SR^n).
\end{Eq*}
Where, when $(3-A)(A+n+2)<8$, we have $p_d<p_F<p_S$, then
\begin{Eq}\label{Eq:Main_7}
T_*=\begin{cases}
c\varepsilon^\frac{p-1}{h_F},&p<p_d;\\
c\varepsilon^\frac{p-1}{h_F} |\ln\varepsilon|^{\frac{1}{h_F}},&p=p_d;\\
c\varepsilon^\frac{2p(p-1)}{h_S},&p_d<p<p_S;\\
\exp\kl(c\varepsilon^{p(1-p)}\kr),&p=p_S;\\
\infty,&p>p_S.
\end{cases}
\end{Eq}
When $(3-A)(A+n+2)=8$, we have $p_d=p_F=p_S$, then
\begin{Eq}\label{Eq:Main_8}
T_*=\begin{cases}
c\varepsilon^\frac{p-1}{h_F},&p<p_d;\\
\exp\kl(c\varepsilon^{\frac{1-p}{2}}\kr),&p=p_d;\\
\infty,&p>p_d.
\end{cases}
\end{Eq}
When $(3-A)(A+n+2)>8$, we have $p_d>p_F>p_S$, then
\begin{Eq}\label{Eq:Main_9}
T_*=\begin{cases}
c\varepsilon^\frac{p-1}{h_F},&p<p_F;\\
\exp\kl(c\varepsilon^{1-p}\kr),&p=p_F;\\
\infty,&p>p_F.
\end{cases}
\end{Eq}
\end{theorem}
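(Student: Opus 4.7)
The plan is to solve the reduced radial equation \eqref{Eq:u_o} for $u=r^{(n-A)/2}U$ by a Picard iteration in a weighted $L^\infty$ space, exploiting sharp pointwise estimates for the ``$A$-dimensional'' radial wave operator $\partial_t^2-\Delta_A$, and then to recover $U$ from $u$. The first ingredient is a set of pointwise decay bounds for the free solution $u_L$ of $(\partial_t^2-\Delta_A)u_L=0$ with compactly supported radial data, together with a matching weighted $L^\infty\to L^\infty$ bound for the Duhamel operator $L^{-1}$. Because $A\in[2,3]$ is real rather than integer, I would obtain these via a Gegenbauer/Riesz-type integral representation, extracting John-type bounds of the shape $(1+t+r)^{(A-1)/2}(1+|t-r|)^{(A-3)/2}|u_L|\lesssim\varepsilon$.

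Next, I would set up an iteration space $X_T=\{u:\|u\|_{X_T}:=\sup_{t\leq T,\,r>0}w(t,r)|u(t,r)|<\infty\}$ with $w(t,r)=(1+t+r)^\alpha(1+|t-r|)^\beta$, where $(\alpha,\beta)$ is tuned to match the linear decay and to absorb the weight $r^{((A-n)p+n-A)/2}$ appearing in \eqref{Eq:u_o}. The map
\begin{Eq*}
N(u)=u_L+L^{-1}\kl(r^{((A-n)p+n-A)/2}|u|^p\kr)
\end{Eq*}
is then shown to be a contraction on a small ball $\|u\|_{X_T}\leq 2M\varepsilon$ provided $\varepsilon^{p-1}\Phi(T)\leq c$ for an explicit function $\Phi(T)$. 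Feeding $u\in X_T$ into Duhamel, splitting the $(s,\rho)$ integration into regions near vs.\ away from the light cone and, separately, near vs.\ away from $\rho=0$, and inserting the Step~1 bounds produces $\Phi(T)$ as the product of a Strauss-type factor (a power of $T$ with exponent governed by $h_S$, becoming logarithmic at $p=p_S$) and a Fujita-type factor (a power of $T$ with exponent governed by $h_F$, becoming logarithmic at $p\in\{p_d,p_F\}$); the former comes from $(t-r)$-decay of the linear solution, the latter from integrating the $r$-singular weight near $\rho=0$.

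Inverting $\varepsilon^{p-1}\Phi(T)\leq c$ in each of the three parameter ranges determined by the sign of $(3-A)(A+n+2)-8$ then reproduces the lifespan lists \eqref{Eq:Main_7}--\eqref{Eq:Main_9}: whichever of $p_S$ and $p_F$ is larger sets the blow-up threshold, while the smaller only appears in the sub-critical formula below it. Global existence is obtained as soon as $\Phi(T)$ stays bounded uniformly in $T$, i.e.\ for $p>\max(p_S,p_F)$.

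The main obstacle is twofold. First, I must establish the sharp weighted $L^\infty$ bound for $L^{-1}$ acting on $r^{((A-n)p+n-A)/2}|u|^p$ when $A$ is non-integer; this is the one step where the assumption $A\in[2,3]$ enters in an essential way, as it controls both the signs of the John-type exponents and the integrability of the $r$-weight at the origin. Second, the borderline cases $p\in\{p_d,p_F,p_S\}$ create logarithmic gains that produce the exponential-in-$\varepsilon$ lifespan bounds $\exp(c\varepsilon^{-\gamma})$; tracking these logs and balancing them against the polynomial Strauss and Fujita factors is the technically delicate part of closing the iteration.
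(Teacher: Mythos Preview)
Your overall framework---reduce to $u=r^{(n-A)/2}U$, derive pointwise bounds on the fundamental solution of $\partial_t^2-\Delta_A$, and run a Picard iteration in weighted $L^\infty$---matches the paper. But there is a genuine gap in the Strauss regime $p_d<p\le p_S$: a single weight $w=(1+t+r)^\alpha(1+|t-r|)^\beta$ together with a closing condition of the form $\varepsilon^{p-1}\Phi(T)\le c$ cannot produce the sharp lifespan $T_*\sim\varepsilon^{2p(p-1)/h_S}$ (or $\exp(c\varepsilon^{-p(p-1)})$ at $p=p_S$). With the natural choice $\beta=\tfrac{(n-1)p-n-1}{2}$ the Duhamel bound gives $\Phi(T)\sim T^{-h_S/2}$, and inverting $\varepsilon^{p-1}T^{-h_S/2}\le c$ yields only $T\lesssim\varepsilon^{2(p-1)/h_S}$, off by a factor $p$ in the $\varepsilon$-exponent. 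The extra $p$ in $\varepsilon^{p(p-1)}$ is not a matter of tuning $(\alpha,\beta)$; it encodes that the effective ``data'' feeding the nonlinear iteration should be of size $\varepsilon^p$, not $\varepsilon$.

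The paper achieves this via the two-zone, two-weight device of Kurokawa--Takamura. It carries three weights $\omega_k=\langle t+r\rangle^{(A-1)/2}\beta_k(t-r)$ with $\beta_1=\langle t-r\rangle^{(A-1)/2}$, $\beta_2=\langle t-r\rangle^{((n-1)p-n-1)/2}$, and $\beta_3=\beta_1/\ln\langle t-r\rangle$, and chooses among them according to the value of $p$ (a nine-part case split). For $p_d<p\le p_S$ it further splits the spacetime at $\langle t-r\rangle\sim(b\varepsilon)^{2(p-1)/((n-1)p-n-A)}$: on the inner zone $\Omega_{*1}$ one closes with $\omega_1$ at size $2C_0\varepsilon\Psi$; on the outer zone $\Omega_{*2}$ one switches to $\omega_2$, and the contribution of the backward cone lying in $\Omega_{*1}$ now enters as $(\varepsilon\Psi)^p$, so the outer iteration reads $\|\omega_2 u_{j+1}\|\le C_1(\varepsilon\Psi)^p+C_1T^{-h_S/2}\|\omega_2 u_j\|^p$ and closes under $\varepsilon^{p(p-1)}T^{-h_S/2}\le a$. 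This zone-splitting is the missing mechanism in your proposal. Two minor corrections: the free decay is $\langle t+r\rangle^{(1-A)/2}\langle t-r\rangle^{(1-A)/2}$, not with exponent $(A-3)/2$ on the second factor; and $\Phi(T)$ is not a \emph{product} of a Strauss and a Fujita factor---the two effects live in disjoint $p$-ranges separated by $p=p_d$ and require different weights.
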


Next, we give the existence results for $A\in[3,\infty)$. 

\begin{theorem}\label{Th:M_2}
Set $2\leq n$, $A\geq 3$ and $1<p<p_{conf}$
and define $T_*$ by
\begin{Eq}\label{Eq:Main_6}
T_*=\begin{cases}
c\varepsilon^\frac{2p(p-1)}{h_S},&1<p<p_S;\\
\exp\kl(c\varepsilon^{p(1-p)}\kr),&p=p_S;\\
\infty,&p>p_S,
\end{cases}
\end{Eq}
which is the same as \eqref{Eq:Main_7} since that $p_d\leq 1$ when $A\geq3$. 

Assume that $1<p\leq p_m$ and 
the initial data $(U_0,U_1)$ satisfy
\begin{Eq}\label{Eq:ir_21}
\|r^\frac{n-1}{2}U_0(r)\|_{L_r^p}+\|r^\frac{n+1}{2}U_1(r)\|_{L_r^p}<\infty,
\end{Eq} 
and supported in $[0,1)$.
Then there exists an $\varepsilon_0>0$ and a constant $c$,
such that  for any $\varepsilon<\varepsilon_0$, 
\eqref{Eq:U_o} has a weak solution in $[0,T_*]\times \SR^n$ verifying
\begin{Eq*}
\|(1+t)^{\frac{(n-1)p-n-1}{2p}}r^{\frac{n+1}{2p}}U\|_{L_t^\infty L_r^p ([0,T_*]\times \SR_+)}<\infty,
\end{Eq*}
with $T_*$ defined in \eqref{Eq:Main_6}.

Assume that $p_m\leq p< p_S$, the initial data satisfy \eqref{Eq:ir_21} and 
\begin{Eq}\label{Eq:ir_22}
\|r^{\frac{n-1}{2}+\frac{1}{p}}U_0(r)\|_{L_r^\infty}+\|r^{\frac{n+1}{2}+\frac{1}{p}}U_1(r)\|_{L_r^\infty}<\infty,
\end{Eq}
with no compact support requirement.
Then there exists an $\varepsilon_0>0$ and a constant $c$,
such that  for any $\varepsilon<\varepsilon_0$, 
\eqref{Eq:U_o} has a weak solution in $[0,T_*]\times \SR^n$ verifying
\begin{Eq*}
\|t^{\frac{(n-1)p-n-1}{2p}}r^{\frac{n+1}{2p}}U\|_{L_t^\infty L_r^p ([0,T_*]\times \SR_+)}<\infty,
\end{Eq*}
with $T_*$ defined in \eqref{Eq:Main_6}.

Assume that $p=p_S$, 
and the initial data satisfy \eqref{Eq:ir_21} and \eqref{Eq:ir_22} for $p=p_S$ as well as some $p>p_S$.
Then there exists an $\varepsilon_0>0$ and a constant $c$,
such that  for any $\varepsilon<\varepsilon_0$, 
\eqref{Eq:U_o} has a weak solution in $[0,T_*]\times \SR^n$ verifying
\begin{Eq*}
\|r^{\frac{n+1}{2p_S}}U\|_{L_t^{p_S^2}L_r^{p_S}([0,1]\times \SR_+)}+
\|t^{\frac{1}{p_S^2}}r^{\frac{n+1}{2p_S}}U\|_{L_t^\infty L_r^{p_S} ([1,T_*]\times \SR_+)}<\infty,
\end{Eq*}
with $T_*$ defined in \eqref{Eq:Main_6}. 

Assume that $p>p_S$ and the initial data satisfy
\begin{Eq*}
\|r^\frac{n-1}{2}U_0(r)\|_{L_r^q}+\|r^\frac{n+1}{2}U_1(r)\|_{L_r^q}<\infty,\qquad q:=\frac{2(p-1)}{(n+3)-(n-1)p}.
\end{Eq*}
Then there exists an $\varepsilon_0>0$,
such that for any $\varepsilon<\varepsilon_0$, 
\eqref{Eq:U_o} has a weak solution in $\SR_+\times \SR^n$ verifying
\begin{Eq*}
\|r^{\frac{n+1}{2p}}U\|_{L_t^{pq} L_r^p(\SR_+\times \SR_+)}+\|r^{\frac{n-1}{2}}U\|_{L_t^{\infty} L_r^q(\SR_+\times \SR_+)}<\infty.
\end{Eq*}
\end{theorem}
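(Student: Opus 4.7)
My plan is to carry out a single Banach fixed-point scheme, applied separately in four different weighted function spaces, all of them built around the fractional-dimensional wave operator $\Box_A := \partial_t^2 - \Delta_A$ introduced in \eqref{Eq:u_o}. Under the substitution $u(t,r) = r^{(n-A)/2} U(t,x)$, equation \eqref{Eq:U_o} becomes $\Box_A u = r^{\alpha(p)}|u|^p$ with $\alpha(p) = \frac{(A-n)(p-1)}{2}$, and because $A \geq 3$ the operator $\Box_A$ enjoys the familiar dispersive decay $t^{-(A-1)/2}$ for its free evolution. Each of the four solution norms of the theorem is a clean weighted $L^{p}$ or $L^\infty$ norm on $u$, since $r^{(n+1)/(2p)} U = r^{(n+1)/(2p) + (A-n)/2}\, u$.

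For each subrange of $p$ I will establish three building blocks and combine them in the classical way. First, a homogeneous estimate $\|u_L\|_X \lesssim \varepsilon D$, where $u_L$ solves $\Box_A u_L = 0$ with data determined by $(U_0,U_1)$ and $D$ is the data norm stated in the theorem; for $p > p_m$ this is a radial weighted Strichartz estimate for $\Box_A$ of Sogge type, while for $1 < p \leq p_m$ it reduces, thanks to the compact-support hypothesis and finite speed of propagation for $\Box_A$, to a pointwise bound on the cone $\{r \leq 1+t\}$. Second, an inhomogeneous weighted estimate $\|L_A F\|_X \lesssim \|F\|_Y$ for the Duhamel operator $L_A$ of $\Box_A$, in a dual weighted norm $Y$. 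Third, a nonlinear estimate $\|r^{\alpha(p)}|u|^p\|_Y \leq \Psi(T)\,\|u\|_X^p$, with $\Psi(T)$ a power of $T$ away from $p_S$ and a logarithm at $p = p_S$. Closing the contraction then requires $\Psi(T_*)\,\varepsilon^{p-1} \lesssim 1$, which, after plugging in the exponents of the stated norms, reproduces exactly the three values of $T_*$ in \eqref{Eq:Main_6}. At the critical case $p = p_S$ the logarithm is absorbed by splitting the time interval into $[0,1]$, handled by the conformal-type space-time norm $L_t^{p_S^2} L_r^{p_S}$, and $[1, T_*]$, handled by the $t^{1/p_S^2}$-weighted $L_t^\infty L_r^{p_S}$ norm, which converts the logarithmic loss into the stated lifespan $\exp(c\varepsilon^{p_S(1-p_S)})$; for $p > p_S$ the positive Strauss gap $h_S(p;n) > 0$ makes $\Psi(T)$ uniformly bounded in $T$, so the contraction closes globally in the conformal Strichartz pair $(L^{pq}_t L^p_r,\ L^\infty_t L^q_r)$ with $q = 2(p-1)/((n+3)-(n-1)p)$.

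The main obstacle, in my view, is the inhomogeneous weighted estimate for $\Box_A$ when $A$ is non-integer and $p$ sits at the endpoints $p_m$ or $p_S$. I would attack it by writing the radial fundamental solution of $\Box_A$ explicitly through fractional integration in $r$ applied to the one-dimensional d'Alembert kernel, reducing the boundedness of $L_A$ on the weighted $L^p_{t,r}$ space to a one-dimensional weighted Hardy-type inequality, and then interpolating against the dispersive $L^1 \to L^\infty$ bound that yields $|u_L(t,r)| \lesssim t^{-(A-1)/2}$. A secondary difficulty is the matching of the singular weight $r^{\alpha(p)}$ against the norm $X$ at these endpoints: near $r = 0$ one must use the extra weight $r^{(n+1)/(2p)}$ encoded in $X$ to beat $r^{\alpha(p)}$, and this balance is exactly what forces the hypothesis $1 < p < p_{conf}$ in the theorem. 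Once these weighted estimates are in hand uniformly in $A \in [3,\infty)$, the Picard iteration $u^{(k+1)} = u_L + L_A(r^{\alpha(p)}|u^{(k)}|^p)$ converges in $X$ on $[0, T_*]$ by the standard Banach argument, and the resulting $u$ yields, via $U = r^{(A-n)/2} u$, a weak solution in the sense of \De{De:U_w} with the prescribed integrability bounds and lifespan $T_*$.
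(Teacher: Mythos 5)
Your plan follows essentially the same route as the paper: transform to the fractional-dimensional operator $\partial_t^2-\Delta_A$, represent its radial fundamental solution explicitly (the paper's $I_A(\mu)$ is exactly the fractional-integration kernel you describe), reduce the weighted homogeneous and Duhamel estimates to one-dimensional weighted inequalities of Hardy/Hardy--Littlewood/Hilbert-transform type in the spirit of Lindblad--Sogge, and close a Picard iteration in the theorem's weighted norms, with the time-splitting device at $p=p_S$ and the pair $(L_t^{pq}L_r^p,\ L_t^\infty L_r^q)$ for $p>p_S$. The only cosmetic differences are that the paper never interpolates against a dispersive $L^1\to L^\infty$ bound (it works directly with the kernel decomposition, since unweighted Strichartz estimates are insufficient here) and that the restriction $p<p_{conf}$ enters simply as $q=\frac{2(p-1)}{(n+3)-(n-1)p}>0$ rather than through a weight balance at $r=0$.
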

\begin{remark}
Here we use a graph with $n\in[4,8]$ as an example
to describe the results we got. 
\begin{figure}[H]
\centering
\includegraphics[width=0.99\textwidth]{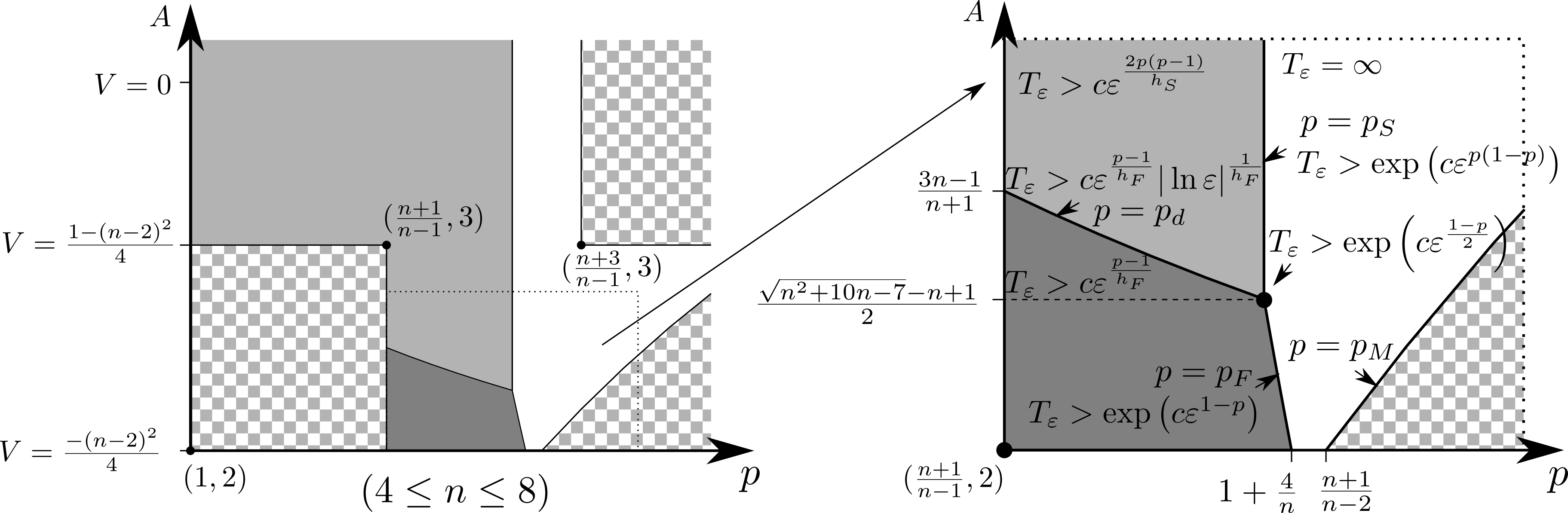}
\end{figure}
The white area stands for the region that the solution is global, 
the light gray area ($p>p_d$) stands for the region that the \emph{Strauss} effect plays role,
the dark gray area ($p<p_d$) stands for the region that \emph{Fujita} effect plays role,
and the chessboard area stands for the region that we can not deal with due to the technical difficulty.
When $n\in[2,3]$, 
we find $\frac{3n-1}{n+1}\leq 2$, 
which means that the dark gray area does not exist for $p\geq p_m=\frac{n+1}{n-1}$.
When $n=2$ we have $p_M=\infty$ for all $A\geq 2$. 
This means that the lower right chessboard area does not exist.
When $n\in[9,\infty)$, we find $1+\frac{4}{n}>\frac{n+1}{n-2}$, 
which means that the dark gray area will be slightly blocked by the lower right chessboard area.
Here we list these situations as the figures below.
\begin{figure}[H]
\centering
\includegraphics[width=0.99\textwidth]{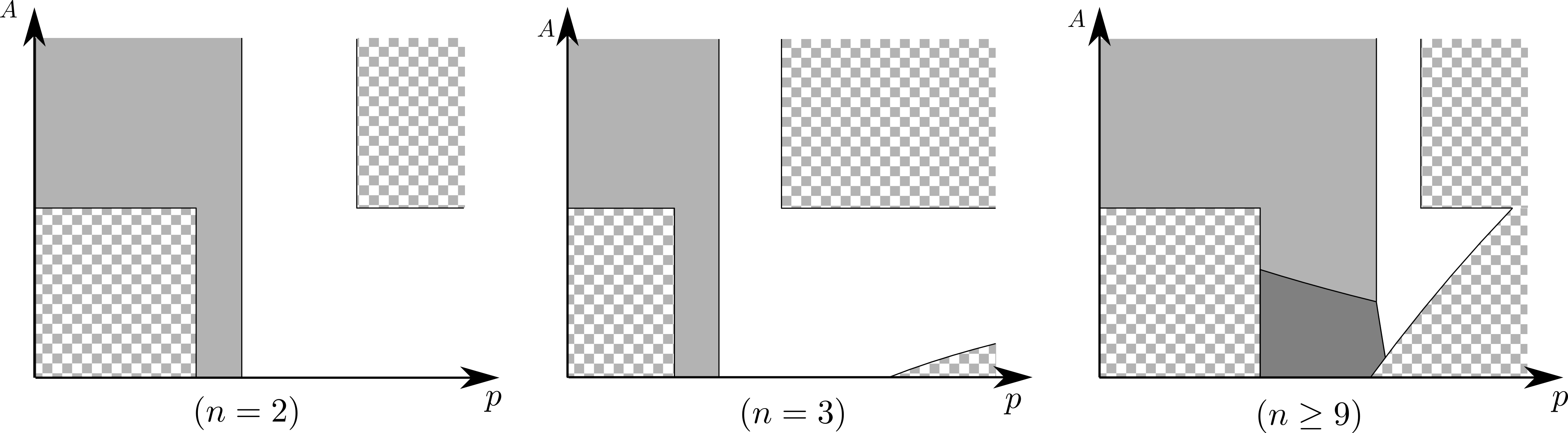}
\end{figure}
\end{remark}

\begin{remark}
The nonlinear term $|U|^p$ in \eqref{Eq:U_o} can be replaced by any $F_p(U)$ which satisfies
\begin{Eq*}
|F_p(U)|\lesssim |U|^p,\qquad |F_p(U_1)-F_p(U_2)|\lesssim |U_1-U_2|\max(|U_1|,|U_2|)^{p-1},
\end{Eq*}
and typical examples include $F_p(U)=\pm|U|^p$ and $F_p(U)=\pm|U|^{p-1}U$.
The only difference is that the constants in the result and proof need to be changed.
\end{remark}

In lower dimension, the weighted $L^\infty$ norm estimate, 
which firstly appeared in \cite{MR535704},
is very useful to prove the long-time existence result.
In \cite{MR2025737}, the authors showed long-time existence results for a two-dimensional wave system, 
where they use a trick that they take different weights in different zones.
In this paper, we further develop such method, adapted for the wave equations with potential,
and finally show the long-time existence result for $A\in[2,3]$.
On the one hand, 
our result is sharp in general,  in the sense that
our lower bound of the lifespan has the same order as the upper bound estimates as appeared in the blow-up results, except for the borderline case $p=p_d$.
On the other hand, we notice that $A=2$, which means $V=-(n-2)^2/4$, is an extreme case to the operator $-\Delta+Vr^{-2}$.
In this case, the operator is still non-negative but not positive any more, which
makes the implementation of the classical energy method more difficult.
However, our approach could handle this extreme case as well as the usual case that $V>-(n-2)^2/4$, without any additional difficulties.

In higher dimension ($n\geq 3$), 
it is well known that the weighted \emph{Strichartz} estimates 
is a helpful tool for the Strauss conjecture, particularly for the high dimensional case (see, e.g., \cite{MR1408499}, 
\cite{MR1481816} and \cite{MR1804518}).
In this paper, we adapt the approach of \cite{MR1408499} to the `fractional dimension' $A\geq 3$,
and give the long-time existence result for $A\in[3,\infty)$, which gives the sharp lower bound of the lifespan.

After comparing all the results we knew, we find that
the determination of the exact lifespan can be considered by a competition between 
the \emph{Strauss} effect and the \emph{Fujita} effect.
When $p>p_d$, the \emph{Strauss} effect is stronger, 
where the final result is only determined by the \emph{Strauss} exponent.
When $p<p_d$, the \emph{Fujita} effect is stronger, 
the final result is only determined by the \emph{Fujita} exponent.

Compared with the results for the problem without potential, 
it seems that the requirement $p>p_m$ in \Th{Th:M_1} is only a technical restriction.
Also, compared with the result of $2$-dimensional \emph{Strauss} conjecture with $p=2$ (though $2\leq p_m(2)=3$), 
we expect that the both the lower bound and the upper bound of lifespan for $p=p_d$ can be further improved.

On the other hand, it will be interesting to investigate
the problem with non-radial data, as well as the problem with more general potential functions.
It is known that when $n=3, 4$ and the potential function is of short range, the similar long-time existence results (including non-radial case) as in  
\Th{Th:M_2} are available in \cite{MW17}.
When the potential function has asymptotic behavior $Vr^{-2}$ with $V>0$, the subcritical blow-up result ($p<\max(p_S, p_F)$) was recently obtained in \cite{lai2021lifespan}.
The existence theory for the corresponding problem remains largely open.

Another interesting problem is whether or not the Cauchy problem \eqref{Eq:U_o} admits global solutions for initial data with lowest possible regularity.
For example, when $V=0$ and $2\leq n\leq 4$, such a result is available for $p\in(p_S,p_{conf}]$
with small, spherically symmetric data in the scale-invariant $(\dot H^{s_c}\times \dot H^{s_c-1})$ space ($s_c=n/2-2/(p-1)$). 
See \cite{MR2455195,MR2333654,MR2769870} for more discussion.
Here we should remark that the global result in \Th{Th:M_2} reaches the lowest regularity requirement (in the sense of scale invariance, though not in $\dot H^s$ space), but \Th{Th:M_1} still has room for improvement. 

The rest of the paper is organized as follows. In \Se{Se:2} we give a detailed discussion of \eqref{Eq:u_o} and its solution. 
In \Se{Se:3}, we restrict $A\in[2,3]$ and show the long-time existence of the solution by weighted $L^\infty$ norm estimate. 
In \Se{Se:4}, we move to situation $A\in[3,\infty)$ and establish the long-time existence result through weighted \emph{Strichartz} type estimate.

\section{Some preparations}\label{Se:2}

In this section, 
we transfer \eqref{Eq:U_o} into the equivalent equation \eqref{Eq:u_o}, 
and explain the rationality of \De{De:U_w}. 
After that, we show the formula of the solution and analyze the properties of this solution.

\subsection{The definition of weak solution}
As we said before, after introducing $u(t,r):=r^{\frac{n-A}{2}}U(t,x)$, 
a formal calculation shows that $u$ satisfied the equation \eqref{Eq:u_o}. 
We pause here and consider its linear form equation 
\begin{Eq}\label{Eq:u_l}
\begin{cases}
\partial_t^2 u -\Delta_Au=F(t,r),\quad r\in \SR_+\\
u(0,r)=f(r),\quad u_t(0,r)=g(r),
\end{cases}
\end{Eq}
with $f$, $g$, $F$ good enough. 
When $A\in \SZ_+$, 
equation \eqref{Eq:u_l} can be considered as an $A$-dimensional spherically symmetric wave equation, 
where $u(t,r)$ is a classical solution in $[0,T]$ 
if $u(t,r)$ satisfies \eqref{Eq:u_l} and $u(t,|x|)\in C^2([0,T]\times \SR^A)$. 
Thus, 
for general situation, 
we should say $u$ is a classical solution of \eqref{Eq:u_l} in $[0,T]$ 
if $u\in C^2([0,T]\times \SR_+)$, $\partial_ru(t,0)=0$ and $u$ satisfies \eqref{Eq:u_l} point wise.

Here we give a quick proof to show that such classical solution is unique. 
When $f,g,F=0$, multiplying $r^{A-1}u_t$ to both sides of \eqref{Eq:u_l} and integrating them in $\Omega:=\{(t,r):0<t<T, 0<r<R+T-t\}$, we see
\begin{Eq*}
0=\frac{1}{2}\kl.\int_0^Rr^{A-1}\kl(u_t^2+u_r^2\kr)\d r\kr|_{t=T}+\frac{1}{2\sqrt{2}}\int\limits_{t+r=T+R\atop 0<t<T}r^{A-1}(u_t-u_r)^2\d \sigma_{t,r}.
\end{Eq*}
This gives the uniqueness. 
After the discussion of the classical solution, 
we naturally say $u$ is the weak solution of \eqref{Eq:u_l} if $u$ satisfies
\begin{Eq}\label{Eq:u_i}
\int_0^T\int_0^\infty F\phi r^{A-1}\d r\d t
=&\int_0^T\int_0^\infty u \kl(\partial_t^2-\Delta_A\kr)\phi r^{A-1}\d r\d t\\
&-\int_0^\infty (g\phi(0,r)-f\partial_t\phi(0,r))r^{A-1}\d r,
\end{Eq}
for any $\phi(t,r)\in C_0^\infty\kl((-\infty,T)\times \SR\kr)$ 
with $\partial_r^{1+2k} \phi(t,0)=0$ for any $k\in \SN_0$. 
Also, set $u=r^{\frac{n-A}{2}}U$, $\phi=r^\frac{n-A}{2}\Phi$,
$f=\varepsilon r^{\frac{n-A}{2}} U_0$, $g=\varepsilon r^{\frac{n-A}{2}} U_1$ 
and $F=r^{\frac{(A-n)p+n-A}{2}}|u|^p$,
it is obvious that $u$ satisfies \eqref{Eq:u_i} is equivalent to that 
$U$ satisfies \eqref{Eq:U_i} with 
$\Phi(t,x)\in \kl\{r^{\frac{A-n}{2}}\phi(t,x):\phi\in C_0^\infty((-\infty,T)\times \SR^n)\kr\}$.
That's the reason we use \De{De:U_w} as the definition of weak solution of \eqref{Eq:U_o}.


\subsection{The formula of classical solution}
In this section we are going to give the formula of solution to \eqref{Eq:u_l}. 
We denote by $u_g$, $u_f$ and $u_F$ the solution of \eqref{Eq:u_l} with only $g\neq 0$, $f\neq 0$ and $F\neq 0$,  respectively. 
\begin{lemma}\label{Le:u_e}
Assume that $f,g, F$ are smooth enough. Then, the classical solution of \eqref{Eq:u_l} is $u=u_f+u_g+u_F$ with 
\begin{Eq*}
u_g=&r^{\frac{1-A}{2}} \int_{0}^{t+r}\rho ^{\frac{A-1}{2}}g(\rho)  I_A(\mu) \d \rho,\quad
\mu=\frac{r^2+\rho^2-t^2}{2r\rho},\\
u_f=&r^{\frac{1-A}{2}} \partial_t\int_{0}^{t+r}\rho ^{\frac{A-1}{2}}g(\rho)  I_A(\mu) \d \rho,\quad
\mu=\frac{r^2+\rho^2-t^2}{2r\rho},\\
u_F=&r^{\frac{1-A}{2}} \int_0^t\int_{0}^{t-s+r}\rho ^{\frac{A-1}{2}}F(s,\rho)  I_A(\mu) \d \rho\d s,\quad
\mu=\frac{r^2+\rho^2-(t-s)^2}{2r\rho},\\
I_A(\mu):=&\frac{2^{\frac{1-A}{2}}}{\Gamma\kl(\frac{A-1}{2}\kr)} \int_{-1}^{1} \mathcal{X}_+^{\frac{1-A}{2}}\kl(\lambda-\mu\kr)\sqrt{1-\lambda^2}^{A-3}\d \lambda.
\end{Eq*}
\end{lemma}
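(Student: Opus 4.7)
My plan is first to note that the three formulas are linked by standard operations, so it suffices to establish the one for $u_g$. Indeed, if $v(t,r;h)$ denotes the homogeneous solution with $v(0,\cdot)=0$ and $v_t(0,\cdot)=h$, then $\partial_t v(t,r;f)$ solves the homogeneous equation with data $(f,0)$ (since $\partial_t$ commutes with $\partial_t^2-\Delta_A$), which yields the formula for $u_f$; and Duhamel's principle gives $u_F(t,r)=\int_0^t v(t-s,r;F(s,\cdot))\,ds$, reproducing the formula for $u_F$. So I would focus entirely on $u_g$.

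\textbf{The formula for $u_g$.} For positive integer $A$, $\Delta_A$ is the radial part of the $A$-dimensional Laplacian and the claim reduces to the classical Poisson/Kirchhoff formula for radial data: when $A$ is odd, $\mathcal{X}_+^{(1-A)/2}$ is a derivative of a delta, and the integral collapses to the standard sphere-mean formula; when $A$ is even, it is an integrable power, and the integral becomes the usual method-of-descent formula. To cover general real $A\geq 2$, I would use the Hankel transform of order $\nu=(A-2)/2$, which diagonalizes $-\Delta_A$ for any real $A>0$. In the transformed variable the problem reads $\partial_t^2\tilde u_g+\xi^2\tilde u_g=0$, $\tilde u_g(0,\xi)=0$, $\partial_t\tilde u_g(0,\xi)=\tilde g(\xi)$, with solution $\tilde u_g(t,\xi)=\tilde g(\xi)\sin(t\xi)/\xi$. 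Inverting and exchanging the order of integration, one arrives at
\[
u_g(t,r)=\int_0^\infty \Bigl[(r\rho)^{-\nu}\int_0^\infty J_\nu(r\xi)J_\nu(\rho\xi)\sin(t\xi)\,d\xi\Bigr]\,g(\rho)\,\rho^{A-1}\,d\rho.
\]
Using the integral representation $J_\nu(z)=\frac{(z/2)^\nu}{\sqrt{\pi}\,\Gamma(\nu+1/2)}\int_{-1}^1 e^{iz\lambda}(1-\lambda^2)^{\nu-1/2}\,d\lambda$ and evaluating the resulting oscillatory integral against $\sin(t\xi)$ via analytic continuation of the classical Weber--Schafheitlin/Sonine formulas would produce exactly $r^{(1-A)/2}\rho^{(A-1)/2}I_A(\mu)$ with $\mu=(r^2+\rho^2-t^2)/(2r\rho)$ and the stated normalization.

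\textbf{Main obstacle.} The principal difficulty is the rigorous interpretation of $I_A(\mu)$ when $(1-A)/2<0$, so that $\mathcal{X}_+^{(1-A)/2}$ is only a Riesz distribution, defined by analytic continuation in its exponent. One must justify differentiation under the integral in $t$ and $r$, the exchange of integration in the Hankel step, and the boundary behavior at $\lambda=1$ (where $(1-\lambda^2)^{(A-3)/2}$ is singular when $2\leq A<3$) and at $\lambda=\mu$ (where $\mathcal{X}_+^{(1-A)/2}$ is supported). A related delicate point is the initial-data check: as $t\downarrow 0$ one has $\mu=(r^2+\rho^2)/(2r\rho)\geq 1$ with equality only at $\rho=r$, so $I_A(\mu)$ must concentrate at $\rho=r$ and extract $g(r)$ with precisely the right constant. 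Both obstacles ultimately rest on the holomorphy of $\mathcal{X}_+^\alpha$ in $\alpha$, so once the integer cases are handled directly, the general result should follow from an analytic-continuation argument in $A$ applied to both sides of the identity.
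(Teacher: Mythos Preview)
Your reductions to $u_g$ via $\partial_t$ and Duhamel match the paper exactly. The core argument, however, is genuinely different. You propose to \emph{derive} the kernel by diagonalizing $-\Delta_A$ with the Hankel transform of order $\nu=(A-2)/2$, solving the transformed ODE, and then identifying $\int_0^\infty J_\nu(r\xi)J_\nu(\rho\xi)\sin(t\xi)\,d\xi$ with $(r\rho)^{\nu}r^{(1-A)/2}\rho^{(A-1)/2}I_A(\mu)$ through Weber--Schafheitlin/Sonine and analytic continuation in $A$. The paper instead takes the formula as given and \emph{verifies} it directly: a change of variables $(\rho,\lambda)\mapsto(\tilde\rho,\tilde\lambda)$ with $\tilde\rho=\sqrt{r^2+\rho^2-2r\rho\lambda}$ converts $u_g$ into a manifestly smooth integral over the box $(\sigma,\lambda)\in[0,1]\times[-1,1]$ involving $g(\sqrt{r^2+t^2\sigma^2-2rt\sigma\lambda})$, from which $C^2$-regularity, the boundary condition $\partial_r u(t,0)=0$, and the PDE itself are checked by explicit differentiation and one integration by parts; the initial conditions are read off from the original expression using the properties of $I_A$ (Lemma~\ref{Le:I_p}), and uniqueness closes the argument.

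Your route is viable but heavier: you must justify the Hankel inversion for radial data on $\mathbb{R}_+$ with weight $r^{A-1}$ for non-integer $A$, make the Weber--Schafheitlin evaluation rigorous in the distributional range, and then run the analytic continuation carefully. The paper's verification sidesteps all Bessel machinery and the distributional subtleties you flag, because after the change of variables the integrand is an ordinary function with only integrable endpoint singularities; the price is that the formula appears \emph{ex nihilo} rather than being derived. Your approach would explain where the formula comes from; theirs is shorter and entirely self-contained.
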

\begin{remark}
Here $\mathcal{X}_+^\alpha$ is a distribution, which has the expression
\begin{Eq*}
\mathcal{X}_+^\alpha(x)=\begin{cases}
0,&x<0,~\alpha>-1,\\
\frac{x^\alpha}{\Gamma(\alpha+1)},& x>0,~\alpha>-1,\\
\frac{\d~}{\d x}\mathcal{X}_+^{\alpha+1}(x), &\alpha\leq -1,\\
\end{cases}
\end{Eq*}
with $\Gamma$ the Gamma function and $\frac{\d~}{\d x}$ the weak derivative.
\end{remark}
\begin{remark}
Consider $\mu=\frac{r^2+\rho^2-t^2}{2r\rho}$. When $r> t$, we have
\begin{Eq*}
\mu|_{0<\rho<r-t}~>~\mu|_{\rho=r-t}=1~>~\mu|_{r-t<\rho<t+r}(>0)~<~\mu|_{\rho=t+r}=1~<~\mu|_{t+r<\rho},
\end{Eq*}
and when $r< t$, we have
\begin{Eq*}
\mu|_{0<\rho<t-r}~<~\mu|_{\rho=t-r}=-1~<~\mu|_{t-r<\rho<t+r}~<~\mu|_{\rho=t+r}=1~<~\mu|_{t+r<\rho}.
\end{Eq*}
By \Le{Le:I_p} below, the $\int_0^{t+r}$ in the formula of $u_g$ and $u_f$ in \Le{Le:u_e} can be replaced by $\int_{\max\{0,r-t\}}^{t+r}$, and $\int_0^{t-s+r}$ in $u_F$ can be replaced by $\int_{\max\{0,r+s-t\}}^{t-s+r}$ .
\end{remark}
To show \Le{Le:u_e}, we need to explore some properties of $I_A(\mu)$.
\begin{lemma}\label{Le:I_p}
For $A>1$ and $I_A(\mu)$ defined in \Le{Le:u_e}, we have
\begin{Eq}
I_A(1-)=\frac{1}{2},\qquad I_A(\mu)=0~for~\mu>1.\label{Eq:I_p_1}
\end{Eq}
Moreover, 
for $A\not\in 1+2\SZ_+$ with some constants $C_0$ and $C_{1}$ depending on $A$, 
we have
\begin{alignat}{2}
|\partial_{\mu}^mI_A(\mu)|\lesssim&(1-\mu)^{\frac{1-A}{2}-m}, &\qquad& \mu\leq -2,m=0,1;\label{Eq:I_p_2}\\
I_A(\mu)=&C_0\ln|1+\mu|+O(1),&\qquad& -2<\mu<1;\label{Eq:I_p_3}\\
\partial_\mu I_A(\mu)= &C_1(1+\mu)^{-1}+O\kl(|\ln|1+\mu||+1\kr),&\qquad& -2<\mu<1.\label{Eq:I_p_4}
\end{alignat}
On the other hand, for $A\in 1+2\SZ_+$, 
we have
\begin{alignat}{2}
I_A(\mu)=&0,&\qquad &\mu<-1;\label{Eq:I_p_5}\\
|\partial_\mu^{m}I_A(\mu)|\lesssim& 1,&\qquad &-1<\mu<1,~m=0,1.\label{Eq:I_p_6}
\end{alignat}
\end{lemma}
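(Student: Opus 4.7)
\medskip

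\noindent\textbf{Proof proposal.} The plan is to split into two regimes according to the nature of the distribution $\mathcal{X}_+^{(1-A)/2}$. When $A = 2k+1 \in 1+2\SZ_+$, we have $\mathcal{X}_+^{(1-A)/2} = \mathcal{X}_+^{-k} = \delta^{(k-1)}$ by the recursive definition, and the factor $(1-\lambda^2)^{(A-3)/2} = (1-\lambda^2)^{k-1}$ is a polynomial. Pairing by duality gives
\begin{equation*}
I_A(\mu) = \tfrac{2^{-k}}{\Gamma(k)}\,(-1)^{k-1}\,\tfrac{\d^{k-1}}{\d\lambda^{k-1}}(1-\lambda^2)^{k-1}\Big|_{\lambda=\mu}
\end{equation*}
for $\mu\in(-1,1)$, and $I_A(\mu)=0$ for $|\mu|>1$ since the support $\{\mu\}$ of $\delta^{(k-1)}(\lambda-\mu)$ is then disjoint from $[-1,1]$. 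This is a polynomial of degree $k-1$, yielding \eqref{Eq:I_p_5}, \eqref{Eq:I_p_6}, and (by a short Leibniz computation that kills every term carrying a surviving $(1-\lambda)$ factor at $\lambda=1$) the limit $I_A(1-)=1/2$ in \eqref{Eq:I_p_1}.

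In the complementary case $A\notin 1+2\SZ_+$, I would first treat the sub-regime $1<A<3$, where $(1-A)/2\in(-1,0)$ and $\mathcal{X}_+^{(1-A)/2}$ is the locally integrable function $x^{(1-A)/2}_+/\Gamma((3-A)/2)$. Then
\begin{equation*}
I_A(\mu)=\tilde c_A\int_{\max(\mu,-1)}^{1}(\lambda-\mu)^{(1-A)/2}(1-\lambda)^{(A-3)/2}(1+\lambda)^{(A-3)/2}\,\d\lambda,\qquad \tilde c_A=\tfrac{2^{(1-A)/2}}{\Gamma(\tfrac{A-1}{2})\Gamma(\tfrac{3-A}{2})}.
\end{equation*}
For $\mu>1$ the range is empty, giving the second half of \eqref{Eq:I_p_1}. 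For $\mu\to1^-$, scale $\lambda=1-(1-\mu)t$: the exponent bookkeeping forces cancellation of $(1-\mu)$, the smooth factor $(1+\lambda)^{(A-3)/2}\to 2^{(A-3)/2}$, and the remaining Beta integral $B(\tfrac{A-1}{2},\tfrac{3-A}{2})=\Gamma(\tfrac{A-1}{2})\Gamma(\tfrac{3-A}{2})$ combines with $\tilde c_A\cdot 2^{(A-3)/2}$ to give exactly $1/2$, confirming \eqref{Eq:I_p_1}. For \eqref{Eq:I_p_2} (i.e.\ $\mu\leq-2$), the factor $\lambda-\mu\asymp 1-\mu$ uniformly on $[-1,1]$, so it pulls out and leaves a finite $\mu$-independent integral; differentiating in $\mu$ produces one extra power $(\lambda-\mu)^{-1}\lesssim(1-\mu)^{-1}$. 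For the near-singular behavior \eqref{Eq:I_p_3}–\eqref{Eq:I_p_4}, split the integral into a piece near $\lambda=-1$ and a smooth remainder; on the singular piece substitute $\lambda=-1+(1+\mu)s$, so that the integrand becomes $(s-1)^{(1-A)/2}s^{(A-3)/2}\times(\text{smooth})$ whose exponents add to $-1$, hence the integration over $s\in[1,C/(1+\mu)]$ produces exactly the logarithm $C_0\ln|1+\mu|+O(1)$. One $\mu$-derivative pulls out an additional $(\lambda-\mu)^{-1}$, which the same substitution upgrades to $C_1(1+\mu)^{-1}$ plus logarithmic lower order.

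To extend this to $A\geq 3$ with $A\notin 1+2\SZ_+$, I would pick the unique $N\in\SN$ with $(1-A)/2+N\in(-1,0]$, use the distributional identity $\mathcal{X}_+^{(1-A)/2}=\partial_\lambda^N\mathcal{X}_+^{(1-A)/2+N}$, and integrate by parts $N$ times against $(1-\lambda^2)^{(A-3)/2}$. Since $(A-3)/2>0$ in this range, the test function vanishes at $\lambda=\pm1$ to positive order, and an inductive check shows that the boundary contributions at $\lambda=\pm 1$ vanish for $\mu$ outside the $\pm 1$ critical values; what remains is an integral of the same shape as in the $1<A<3$ analysis with exponents shifted, so the four estimates \eqref{Eq:I_p_1}–\eqref{Eq:I_p_4} carry over with the stated singular orders (which are exponent-independent modulo $N$). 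The value $I_A(1-)=1/2$ either follows from the same local scaling near $\lambda=1$, or more cleanly by analytic continuation in $A$ of the $1<A<3$ result, since the left- and right-hand sides depend meromorphically on $A$ with common poles only on $1+2\SZ_+$.

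\textbf{The main obstacle} will be the careful bookkeeping of boundary terms in the $N$-fold integration by parts for $A>3$: the test function $(1-\lambda^2)^{(A-3)/2}$ is only $C^{\lfloor (A-3)/2\rfloor}$ at the endpoints, so one must verify at each stage of integration by parts that the remaining boundary terms either involve a factor already vanishing at the endpoint or a distributional factor $\mathcal{X}_+^{\alpha}(\pm1-\mu)$ which is smooth for $\mu$ away from $\pm 1$. A Hadamard finite-part / analytic-continuation-in-$A$ argument is the clean way to package this, and I expect to rely on it to avoid distributional subtleties in the direct integration-by-parts computation.
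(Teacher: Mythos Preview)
Your approach is essentially the paper's: it too writes $A=1+2k+2\theta$ (with $\theta\in(0,1)$), integrates by parts $k$ times to reduce $\mathcal{X}_+^{(1-A)/2}$ to the locally integrable $\mathcal{X}_+^{-\theta}$, rescales near $\mu=1$ to recover the Beta integral giving $I_A(1-)=1/2$, and extracts the logarithm near $\mu=-1$ --- the paper via an explicit additive splitting into four pieces rather than your single substitution $\lambda=-1+(1+\mu)s$, but the mechanism (exponents summing to $-1$) is identical. One point your sketch leaves implicit is the range $-2<\mu<-1$, where the lower integration limit is $-1$ rather than $\mu$; the paper handles this separately by estimating $I_A(\mu)-I_A(-2-\mu)$, but your rescaling extends just as well with $(s-1)^{(1-A)/2}$ replaced by $(1+s)^{(1-A)/2}$ and $s$ starting at $0$.
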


\subsection{Proof of \Le{Le:u_e}}\label{Pf_u_e}
Here we only show the proof of $u=u_g$ with $f=F=0$, the other formulas can be demonstrated by a direct calculation and \emph{Duhamel}'s principle. Without loss of generality we only deal with the case $A\not\in \SZ_+$.

\setcounter{part0}{0}
\part[Alternative expressions of $u_g$]
Before the proof, we give an alternative expressions of $u_g$ constructed in \Le{Le:u_e}. 
We first introduce a change of the variables
\begin{Eq*}
&(\rho,\lambda)=\kl(\sqrt{r^2+\tilde \rho^2-2r\tilde \rho\tilde\lambda},~\frac{r-\tilde \rho\tilde\lambda}{\sqrt{r^2+\tilde \rho^2-2r\tilde \rho\tilde\lambda}}\kr)\\
\Leftrightarrow&(\tilde\rho,\tilde\lambda)=\kl(\sqrt{r^2+ \rho^2-2r \rho\lambda},~\frac{r- \rho\lambda}{\sqrt{r^2+ \rho^2-2r \rho\lambda}}\kr).
\end{Eq*}
A direct calculation shows that the map $(\rho,\lambda)\mapsto(\tilde\rho,\tilde\lambda)$ satisfies the relation
\begin{Eq*}
\kl|\frac{\d(\rho,\lambda)}{\d(\tilde\rho,\tilde\lambda)}\kr|=\frac{\tilde\rho^2}{r^2+\tilde \rho^2-2r\tilde \rho\tilde\lambda}=\frac{\tilde\rho^2}{\rho^2},\qquad \rho^2(1-\lambda^2)=\tilde\rho^2(1-\tilde\lambda^2),
\end{Eq*}
and is a bijection from $(0,\infty)\times(-1,1)$ to itself.
For $A=1+2k+2\theta$ with $k\in \SN_0$ and $\theta\in(0,1)$, we substitute $I_A(\mu)$ into $u_g$.
Noticing $\mathcal{X}_+^{\alpha}$ is a homogeneous distribution of degree $\alpha$, we find
\begin{Eq*}
u_g=&\frac{1}{\Gamma\kl(\frac{A-1}{2}\kr)} \int_{0}^{\infty}\int_{-1}^{1} g(\rho)\mathcal{X}_+^{\frac{1-A}{2}}\kl(t^2-r^2-\rho^2+2r\rho\lambda\kr)\rho^{A-1}\sqrt{1-\lambda^2}^{A-3}\d \lambda\d\rho.\\
=&\frac{1}{\Gamma\kl(k+\theta\kr)} \int_{0}^{\infty}\int_{-1}^{1} g\kl(\sqrt{r^2+\tilde\rho^2-2r\tilde\rho\tilde\lambda}\kr)\\
&\phantom{\frac{1}{\Gamma\kl(k+\theta\kr)} \int_{0}^{\infty}\int_{-1}^{1}}
\times\mathcal{X}_+^{-k-\theta}\kl(t^2-\tilde\rho^2\kr)\tilde\rho^{A-1}\sqrt{1-\tilde\lambda^2}^{A-3}\d \tilde\lambda\d\tilde\rho\\
=&\frac{1}{\Gamma\kl(k+\theta\kr)} \kl(\frac{\partial_t}{2t}\kr)^k\int_{0}^{\infty}\int_{-1}^{1} g\kl(\sqrt{r^2+\tilde\rho^2-2r\tilde\rho\tilde\lambda}\kr)\\
&\phantom{\frac{1}{\Gamma\kl(k+\theta\kr)} \kl(\frac{\partial_t}{2t}\kr)^k\int_{0}^{\infty}\int_{-1}^{1}}
\times\mathcal{X}_+^{-\theta}\kl(t^2-\tilde\rho^2\kr)\tilde\rho^{A-1}\sqrt{1-\tilde\lambda^2}^{A-3}\d \tilde\lambda\d\tilde\rho.
\end{Eq*}
Set $\tilde\rho=t\sigma$ and $\tilde\lambda=\lambda$. Considering the definition of $\mathcal{X}_+^{-\theta}$ we finally reach
\begin{Eq}\label{Eq:u_g_c}
u_g=&\frac{1}{\Gamma\kl(k+\theta\kr)\Gamma\kl(1-\theta\kr)} \kl(\frac{\partial_t}{2t}\kr)^k\Bigg(t^{1+2k}\int_{0}^{1}\int_{-1}^{1} \frac{g\kl(\sqrt{r^2+t^2\sigma^2-2rt\sigma\lambda}\kr)}{(1-\sigma^2)^\theta}\\
&\phantom{\frac{1}{\Gamma\kl(k+\theta\kr)\Gamma\kl(1-\theta\kr)} \kl(\frac{\partial_t}{2t}\kr)^k\Bigg(t^{1+2k}\int_{0}^{1}\int_{-1}^{1}}\times\sigma^{A-1}\sqrt{1-\lambda^2}^{A-3}\d \lambda\d\sigma\Bigg).
\end{Eq}

\part[Differentiability, boundary requirement and initial requirement]
Now we begin the proof.
Firstly, using the expression we just obtained,
we can easily check that $u\in C^2(\SR_+^2)$ while $g\in C_0^\infty((0,\infty))$. 
We can also calculate that
\begin{Eq*}
\partial_ru=&\frac{1}{\Gamma\kl(k+\theta\kr)\Gamma(1-\theta)} \kl(\frac{\partial_t}{2t}\kr)^kt^{1+2k}\int_{0}^{1}\int_{-1}^{1}\frac{r-t\sigma\lambda}{\sqrt{r^2+t^2\sigma^2-2rt\sigma\lambda}} \frac{g'\kl(\sqrt{r^2+t^2\sigma^2-2rt\sigma\lambda}\kr)}{(1-\sigma^2)^\theta}\\
&\phantom{\frac{1}{\Gamma\kl(k+\theta\kr)\Gamma(1-\theta)} \kl(\frac{\partial_t}{2t}\kr)^kt^{1+2k}\int_{0}^{1}\int_{-1}^{1}}
\sigma^{A-1}\sqrt{1-\lambda^2}^{A-3}\d \lambda\d\sigma.
\end{Eq*}
Let $r=0$.
Since the integrand is an odd function of $\lambda$, 
such $u$ satisfies the boundary requirement.

To check the initial conditions we temporarily use the original expression in \Le{Le:u_e}.
Using \Le{Le:I_p} we know $I_A(\mu)=0$ when $\mu>1$, which happens when $\rho<r-t$ with $r>t$.
Then for any $r>t>0$, we have
\begin{Eq*}
u(t,r)=&r^{\frac{1-A}{2}} \int_{r-t}^{t+r}\rho^{\frac{A-1}{2}}g(\rho)  I_A\kl(\mu\kr) \d \rho\\
u_t(t,r)=&r^{\frac{1-A}{2}}\kl((t+r)^{\frac{A-1}{2}}g(t+r)+(r-t)^{\frac{A-1}{2}}g(r-t)\kr)I_A(1-)\\
&+r^{\frac{1-A}{2}} \int_{r-t}^{t+r}\rho^{\frac{A-1}{2}}g(\rho)  \frac{-t}{r\rho}I_A'\kl(\mu\kr) \d \rho.
\end{Eq*}
Let $t\rightarrow 0$. 
Using \Le{Le:I_p} again 
we find $u(0,r)=0$ and $u_t(0,r)=g(r)$. 

\part[Differential equation requirement]
Finally, 
we need to check that $u$ satisfies \eqref{Eq:u_l}. 
By a calculation trick 
\begin{Eq*}
\partial_t^2 \kl(\frac{\partial_t}{t}\kr)^{k}t^{1+2k}=\kl(\frac{\partial_t}{t}\kr)^{k+1}t^{2k+2}\partial_t,
\end{Eq*}
(see e.g. \cite[Lemma 2 in Section 2.4]{MR2597943}) we calculate that 
\begin{Eq*}
\partial_t^2 u=&\frac{2}{\Gamma\kl(k+\theta\kr)\Gamma\kl(1-\theta\kr)} \kl(\frac{\partial_t}{2t}\kr)^{k+1}\kl(t^{2k+2}w_1\kr),\\
w_1:=&\int_{0}^{1}\int_{-1}^{1}\frac{t\sigma^2-r\sigma\lambda}{\sqrt{r^2+t^2\sigma^2-2rt\sigma\lambda}} \frac{g'\kl(\sqrt{r^2+t^2\sigma^2-2rt\sigma\lambda}\kr)}{(1-\sigma^2)^\theta}\sigma^{A-1}\sqrt{1-\lambda^2}^{A-3}\d \lambda\d\sigma.
\end{Eq*}
On the other hand, a similar process as that deduced \eqref{Eq:u_g_c} also shows 
\begin{Eq*}
u=&\frac{1}{\Gamma\kl(k+\theta\kr)\Gamma\kl(2-\theta\kr)} \kl(\frac{\partial_t}{2t}\kr)^{k+1}\kl(t^{2k+2} \tilde w_2\kr),\\
\tilde w_2:=&t\int_{0}^{1}\int_{-1}^{1} \frac{g\kl(\sqrt{r^2+t^2\sigma^2-2rt\sigma\lambda}\kr)}{(1-\sigma^2)^{\theta-1}}\sigma^{A-1}\sqrt{1-\lambda^2}^{A-3}\d \lambda\d\sigma.
\end{Eq*}
Then, we see
\begin{Eq*}
\partial_r\tilde w_2=&t\int_{0}^{1}\int_{-1}^{1}\frac{r-t\sigma\lambda}{\sqrt{r^2+t^2\sigma^2-2rt\sigma\lambda}}\frac{g'\kl(\sqrt{r^2+t^2\sigma^2-2rt\sigma\lambda}\kr)}{(1-\sigma^2)^{\theta-1}}\sigma^{A-1}\sqrt{1-\lambda^2}^{A-3}\d \lambda\d\sigma\\
=&-\int_{0}^{1}\int_{-1}^{1}\frac{\partial_\lambda g\kl(\sqrt{r^2+t^2\sigma^2-2rt\sigma\lambda}\kr)}{(1-\sigma^2)^{\theta-1}}\sigma^{A-2}\sqrt{1-\lambda^2}^{A-1}\d \lambda\d\sigma\\
&-\int_{0}^{1}\int_{-1}^{1}\lambda \frac{\partial_\sigma g\kl(\sqrt{r^2+t^2\sigma^2-2rt\sigma\lambda}\kr)}{(1-\sigma^2)^{\theta-1}}\sigma^{A-1}\sqrt{1-\lambda^2}^{A-3}\d \lambda\d\sigma.
\end{Eq*}
Using integration by parts, we get
\begin{Eq*}
\partial_r\tilde w_2=&-2(1-\theta)\int_{0}^{1}\int_{-1}^{1}\lambda \frac{g\kl(\sqrt{r^2+t^2\sigma^2-2rt\sigma\lambda}\kr)}{(1-\sigma^2)^{\theta}}\sigma^{A}\sqrt{1-\lambda^2}^{A-3}\d \lambda\d\sigma.
\end{Eq*}
Thus we have
\begin{Eq*}
\partial_r u=&\frac{2}{\Gamma\kl(k+\theta\kr)\Gamma\kl(1-\theta\kr)} \kl(\frac{\partial_t}{2t}\kr)^{k+1}\kl(t^{2k+2}w_2\kr)\\
w_2:=&-\int_{0}^{1}\int_{-1}^{1}\lambda \frac{g\kl(\sqrt{r^2+t^2\sigma^2-2rt\sigma\lambda}\kr)}{(1-\sigma^2)^{\theta}}\sigma^{A}\sqrt{1-\lambda^2}^{A-3}\d \lambda\d\sigma.
\end{Eq*}
Taking the derivative again, we also have
\begin{Eq*}
\partial_r^2 u=&\frac{2}{\Gamma\kl(k+\theta\kr)\Gamma\kl(1-\theta\kr)} \kl(\frac{\partial_t}{2t}\kr)^{k+1}\kl(t^{2k+2}w_3\kr)\\
w_3:=&\int_{0}^{1}\int_{-1}^{1}\frac{t\sigma^2\lambda^2-r\sigma\lambda}{\sqrt{r^2+t^2\sigma^2-2rt\sigma\lambda}} \frac{g'\kl(\sqrt{r^2+t^2\sigma^2-2rt\sigma\lambda}\kr)}{(1-\sigma^2)^{\theta}}\sigma^{A-1}\sqrt{1-\lambda^2}^{A-3}\d \lambda\d\sigma.
\end{Eq*}
Gluing $\partial_t^2u$, $\partial_r^2 u$ and $\partial_r u$ together, we finally calculate
\begin{Eq*}
&(\partial_t^2-\partial_r^2-(A-1)r^{-1}\partial_r)u\\
=&\frac{2r^{-1}}{\Gamma\kl(k+\theta\kr)\Gamma\kl(1-\theta\kr)} \kl(\frac{\partial_t}{2t}\kr)^{k+1}\kl(t^{2k+2}\kl(rw_1-rw_3-(A-1)w_2\kr)\kr)
\end{Eq*}
where
\begin{Eq*}
rw_1-rw_3=&-\int_{0}^{1}\int_{-1}^{1}\frac{\partial_\lambda g\kl(\sqrt{r^2+t^2\sigma^2-2rt\sigma\lambda}\kr)}{(1-\sigma^2)^\theta}\sigma^{A}\sqrt{1-\lambda^2}^{A-1}\d \lambda\d\sigma\\
=&(A-1)w_2.
\end{Eq*}
This finishes the proof.

\subsection{Proof of \Le{Le:I_p}}
We begin with the second half of \eqref{Eq:I_p_1},
it is trivial since $\mathcal{X}_+^{\frac{1-A}{2}}\kl(\lambda-\mu\kr)=0$ for $\lambda\in[-1,1]$ and $\mu>1$. 
As for other results, we need to divide $A$ into two cases. 

\setcounter{part0}{0}
\part[$A$ is not odd]
We begin with the case that $A$ is not odd, i.e. $A=1+2k+2\theta$ with $k\in \SN_0$ and $0<\theta<1$. 
By definition we see that when $\mu\leq -2$ and $\lambda\in[-1,1]$, we have
\begin{Eq*}
\partial_\mu^m \mathcal{X}_+^{\frac{1-A}{2}}\kl(\lambda-\mu\kr)\approx (1-\mu)^{\frac{1-A}{2}-m},
\end{Eq*}
which gives \eqref{Eq:I_p_2}. When $-1<\mu<1$, $I_A$ has the formula
\begin{Eq*}
I_A(\mu)=&\frac{2^{-k-\theta}}{\Gamma\kl(k+\theta\kr)} \int_{-1}^{1} \partial_\lambda^k\mathcal{X}_+^{-\theta}\kl(\lambda-\mu\kr)(1-\lambda^2)^{k+\theta-1}\d \lambda\\
=&\frac{2^{-k-\theta}}{\Gamma\kl(k+\theta\kr)}\int_{-1}^{1} \mathcal{X}_+^{-\theta}\kl(\lambda-\mu\kr)(-\partial_\lambda)^k(1-\lambda^2)^{k+\theta-1}\d \lambda\\
=&\frac{2^{-k-\theta}}{\Gamma\kl(k+\theta\kr)\Gamma(1-\theta)}\int_{\mu}^{1} (\lambda-\mu)^{-\theta}\sum_{j=0}^{\lfloor k/2\rfloor }C_{j,k,\theta}\lambda^{k-2j}(1-\lambda^2)^{j+\theta-1}\d \lambda
\end{Eq*}
with some constants $C_{j,k,\theta}$. Here $\lfloor a\rfloor$ stands for the integer part of $a$. 

\subpart[$\mu$ close to $1-$]
Firstly we let $\mu$ close to $1-$. Introducing $\lambda=(1-\mu)\sigma+\mu$, we have
\begin{Eq*}
&\int_{\mu}^{1} (\lambda-\mu)^{-\theta}\lambda^{k-2j}(1-\lambda^2)^{j+\theta-1}\d \lambda\\
=&(1-\mu)^j\int_0^1 \sigma^{-\theta}(1-\sigma)^{j+\theta-1}\kl(\sigma+\mu-\mu\sigma\kr)^{k-2j}\kl(\sigma+\mu-\mu\sigma+1\kr)^{j+\theta-1}\d\sigma.
\end{Eq*} 
Let $\mu\rightarrow 1-$. Using dominated convergence theorem, we find the limit is nonzero only if $j=0$, where
\begin{Eq*}
&\lim_{\mu\rightarrow 1-}\int_0^1 \sigma^{-\theta}(1-\sigma)^{\theta-1}\kl(\sigma+\mu-\mu\sigma\kr)^{k}\kl(\sigma+\mu-\mu\sigma+1\kr)^{\theta-1}\d\sigma\\
=&2^{\theta-1}\int_0^1 \sigma^{-\theta}(1-\sigma)^{\theta-1}\d\sigma.
\end{Eq*}
Now, we calculate
\begin{Eq*}
C_{0,k,\theta}=\begin{cases}
1,&k=0;\\
2^k(k+\theta-1)(k+\theta-2)\cdots \theta,&k>0,
\end{cases}
\end{Eq*}
then
\begin{Eq*}
\lim_{\mu\rightarrow 1-}I_A(\mu)=&\frac{1}{2\Gamma\kl(\theta\kr)\Gamma(1-\theta)}\int_{0}^{1} \sigma^{-\theta}(1-\sigma)^{\theta-1}\d \sigma=\frac{1}{2}.
\end{Eq*}
This finishes the first half of \eqref{Eq:I_p_1} for non odd $A$.

For derivative, we calculate
\begin{Eq*}
&\partial_\mu\kl((1-\mu)^j\int_0^1 \sigma^{-\theta}(1-\sigma)^{j+\theta-1}\kl(\sigma+\mu-\mu\sigma\kr)^{k-2j}\kl(\sigma+\mu-\mu\sigma+1\kr)^{j+\theta-1}\d\sigma\kr)\\
=&j(1-\mu)^{j-1}\int_0^1 \sigma^{-\theta}(1-\sigma)^{j+\theta-1}\kl(\sigma+\mu-\mu\sigma\kr)^{k-2j}\kl(\sigma+\mu-\mu\sigma+1\kr)^{j+\theta-1}\d\sigma\\
&+(k-2j)(1-\mu)^j\int_0^1 \sigma^{-\theta}(1-\sigma)^{j+\theta}\kl(\sigma+\mu-\mu\sigma\kr)^{k-2j-1}\kl(\sigma+\mu-\mu\sigma+1\kr)^{j+\theta-1}\d\sigma\\
&+(j+\theta-1)\int_0^1 \sigma^{-\theta}(1-\sigma)^{j+\theta}\kl(\sigma+\mu-\mu\sigma\kr)^{k-2j}\kl(\sigma+\mu-\mu\sigma+1\kr)^{j+\theta-2}\d\sigma,
\end{Eq*}
with no singularity in all these integrals. This means $\partial_\mu I_A(\mu)=O(1)$ for $\mu$ close to $1-$, which corroborates with \eqref{Eq:I_p_4}.

\subpart[$\mu$ close to $-1+$]
Then we let $\mu$ close to $-1+$, without loss of generality we assume $\mu<-1/2$, then
\begin{Eq*}
&\int_{\mu}^{1} (\lambda-\mu)^{-\theta}\lambda^{k-2j}(1-\lambda^2)^{j+\theta-1}\d \lambda\\
=&\int_{\mu}^{0} (\lambda-\mu)^{-\theta}\lambda^{k-2j}(1-\lambda^2)^{j+\theta-1}\d \lambda+\int_{0}^{1} (\lambda-\mu)^{-\theta}\lambda^{k-2j}(1-\lambda^2)^{j+\theta-1}\d \lambda\\
=&\int_{\mu}^{0} (\lambda-\mu)^{-\theta}(1+\lambda)^{j+\theta-1}h(\lambda)\d \lambda+O(1),
\end{Eq*}
where $h(\lambda):=\lambda^{k-2j}(1-\lambda)^{j+\theta-1}$ satisfying $h(\lambda)\in C^\infty([-1,0])$. For the first integral, we split it to
\begin{Eq*}
&\int_{\mu}^{0} (\lambda-\mu)^{-\theta}(1+\lambda)^{j+\theta-1}(h(\lambda)-h(-1))\d \lambda
+h(-1)\int_{\mu}^{0} (1+\lambda)^{j-1}\d \lambda\\
&+h(-1)\kl(\int_{\mu}^{1+2\mu}+\int_{1+2\mu}^0\kr) \kl((\lambda-\mu)^{-\theta}-(1+\lambda)^{-\theta}\kr)(1+\lambda)^{j+\theta-1}\d \lambda\\
\equiv& J_1+J_2+J_3+J_4.
\end{Eq*}
Using the mean value theorem, it is easy to find that 
\begin{Eq*}
|J_1|\lesssim& \int_{\mu}^{0} (\lambda-\mu)^{-\theta}(1+\lambda)^{j+\theta}\d \lambda \lesssim 1,\\
J_2=&C_j\ln(1+\mu)+O(1),\\
|J_3|\lesssim& (1+\mu)^{j+\theta-1}\int_{\mu}^{1+2\mu} (\lambda-\mu)^{-\theta}-(1+\lambda)^{-\theta}\d \lambda\lesssim (1+\mu)^j\lesssim 1,\\
|J_4|\lesssim&(1+\mu)\int_{1+2\mu}^{0}(1+\lambda)^{j-2}\d \lambda\lesssim 1.
\end{Eq*}
Adding together, we find
\begin{Eq*}
\int_{\mu}^{1} (\lambda-\mu)^{-\theta}\lambda^{k-2j}(1-\lambda^2)^{j+\theta-1}\d \lambda
=& C_j\ln(1+\mu)+O(1),
\end{Eq*}
which gives \eqref{Eq:I_p_3} for $-1<\mu$.

As for the derivative, we introduce the change of variable $\lambda=\sigma(1+\mu)-1$, then
\begin{Eq*}
&\int_{\mu}^{1} (\lambda-\mu)^{-\theta}\lambda^{k-2j}(1-\lambda^2)^{j+\theta-1}\d \lambda\\
=&(1+\mu)^{j}\int_{1}^{(1+\mu)^{-1}} (\sigma-1)^{-\theta}\sigma^{j+\theta-1}h(\sigma(1+\mu)-1)\d \sigma\\
&+\int_{0}^{1} (\lambda-\mu)^{-\theta}\lambda^{k-2j}(1-\lambda^2)^{j+\theta-1}\d \lambda.
\end{Eq*}
Taking derivative and splitting it similarly as above, we also find
\begin{Eq*}
\partial_\mu\int_{\mu}^{1} (\lambda-\mu)^{-\theta}\lambda^{k-2j}(1-\lambda^2)^{j+\theta-1}\d \lambda=C_j'(1+\mu)^{-1}+O\kl(|\ln(1+\mu)|+1\kr), 
\end{Eq*}
which gives \eqref{Eq:I_p_4} for $-1<\mu$.

\subpart[$\mu$ close to $-1-$]
To get another part of \eqref{Eq:I_p_3}, we only need to control $I_A(\mu)-I_A(-2-\mu)$ for $-3/2< \mu<-1$. Here, for $-2\leq \mu<-1$, $I_A$ has the formula
\begin{Eq*}
I_A(\mu)=&\frac{2^{-k-\theta}}{\Gamma\kl(k+\theta\kr)\Gamma(1-\theta)}\int_{-1}^{1} (\lambda-\mu)^{-\theta}\sum_{j=0}^{\lfloor k/2 \rfloor}C_{j,k,\theta}\lambda^{k-2j}(1-\lambda^2)^{j+\theta-1}\d \lambda.
\end{Eq*}
Thus, to show \eqref{Eq:I_p_3}, we only need to estimate
\begin{Eq*}
&\int_{-1}^{1} (\lambda-\mu)^{-\theta}\lambda^{k-2j}(1-\lambda^2)^{j+\theta-1}\d \lambda-\int_{-\mu-2}^{1} (\lambda+\mu+2)^{-\theta}\lambda^{k-2j}(1-\lambda^2)^{j+\theta-1}\d \lambda\\
=&\int_{-1}^{-\mu-2} (\lambda-\mu)^{-\theta}(1+\lambda)^{j+\theta-1}h(\lambda)\d \lambda\\
&+\kl(\int_{-\mu-2}^{-2\mu-3}+\int_{-2\mu-3}^{0}\kr) \kl((\lambda-\mu)^{-\theta}-(\lambda+\mu+2)^{-\theta}\kr)(1+\lambda)^{j+\theta-1}h(\lambda)\d \lambda\\
&+\int_{0}^{1} \kl((\lambda-\mu)^{-\theta}-(\lambda+\mu+2)^{-\theta}\kr)\lambda^{k-2j}(1-\lambda^2)^{j+\theta-1}\d \lambda\\
\equiv& J_1+J_2+J_3+J_4.
\end{Eq*}
Here we have
\begin{Eq*}
|J_1|\lesssim &(-1-\mu)^{-\theta}\int_{-1}^{-\mu-2} (1+\lambda)^{j+\theta-1}|h(\lambda)|\d \lambda\lesssim 1,\\
|J_2|\lesssim &(-1-\mu)^{j+\theta-1}\int_{-\mu-2}^{-2\mu-3}(\lambda-\mu)^{-\theta}-(\lambda+\mu+2)^{-\theta}\d\lambda\lesssim(-1-\mu)^j\lesssim 1,\\
|J_3|\lesssim &(-1-\mu)\int_{-2\mu-3}^{0}(1+\lambda)^{j-2}\d\lambda\lesssim 1,\\
|J_4|\lesssim &\int_{0}^{1}(1-\lambda)^{j+\theta-1}\d\lambda\lesssim 1.\\
\end{Eq*}
In summary, we finish the proof of \eqref{Eq:I_p_3}. 

As for the derivative, we introduce the change of variable $\lambda=\mu-\sigma(1-\mu)$ for $J_2$.
A similar approach as above we find $|\partial_\mu(J_1+J_2+J_3+J_4)|\lesssim |\ln(1+\mu)|+1$.
This finishes the proof of \eqref{Eq:I_p_4}.

\part[$A$ is odd]
Next, we consider the case $A=1+2k$ with $k\in \SZ_+$. In this case we have
\begin{Eq*}
\supp \mathcal{X}_+^{\frac{1-A}{2}}\kl(x\kr)=\supp \delta^{(k-1)}\kl(x\kr)=\{0\},
\end{Eq*}
which gives \eqref{Eq:I_p_5}. On the other hand, when $-1<\mu<1$, we have
\begin{Eq*}
I_A(\mu)=&\frac{2^{-k}}{\Gamma\kl(k\kr)} \int_{-1}^{1} \partial_\lambda^{k-1}\delta\kl(\lambda-\mu\kr)(1-\lambda^2)^{k-1}\d \lambda\\
=&\frac{2^{-k}}{\Gamma\kl(k\kr)}\int_{-1}^{1} \delta\kl(\lambda-\mu\kr)(-\partial_\lambda)^{k-1}(1-\lambda^2)^{k-1}\d \lambda\\
=&\frac{2^{-k}}{\Gamma\kl(k\kr)}\sum_{j=0}^{\lfloor (k-1)/2 \rfloor}C_{j,k}'\mu^{k-1-2j}(1-\mu^2)^{j}.
\end{Eq*}
This means there is no singularity both for $I_A(\mu)$ and its derivate, which lead to \eqref{Eq:I_p_6}. 
Here we also find $C_{0,k}'=2^{k-1}(k-1)!$, 
which implies the first half of \eqref{Eq:I_p_1} for odd $A$. Now we finish the proof of \Le{Le:I_p}.

\subsection{Additional discussion of weak solutions}
In light of the fact that the framework we take is slightly different from the usual one, 
we will discuss a bit more of the weak solution. 
We will show that when 
\begin{Eq}\label{Eq:u_i_r}
r^{A-1}f\in L_{loc;r}^1,~r^{A-1}g\in L_{loc;r}^1,~ r^{A-1}F\in L_{loc;t,r}^1,~r^{A-1}u\in L_{loc;t,r}^1
\end{Eq}
with $u$ calculated by \Le{Le:u_e}, then \eqref{Eq:u_i} holds.

To show this result, 
we divide $u$ to $u_g$, $u_f$ and $u_F$.
We begin with $u_g$ part. Noticing $I_A(\mu)=0$ while $\mu>1$, in this case we have
\begin{Eq*}
&\int_0^T\int_0^\infty u\cdot \kl(\partial_t^2-\Delta_A\kr)\phi r^{A-1}\d r\d t\\
=&\int_0^T\int_0^\infty  \int_{0}^\infty r^{\frac{A-1}{2}}\rho ^{\frac{A-1}{2}}g(\rho)  I_A\kl(\frac{r^2+\rho^2-t^2}{2r\rho}\kr) \kl(\partial_t^2-\Delta_A\kr)\phi(t,r)  \d \rho\d r\d t.
\end{Eq*}
Set $t=T-s$, swap $r$ and $\rho$ then exchange the order of integration. It goes to
\begin{Eq*}
&\int_0^\infty \int_0^T \int_0^\infty r^{\frac{A-1}{2}}\rho ^{\frac{A-1}{2}}g(r)  I_A\kl(\frac{r^2+\rho^2-(T-s)^2}{2r\rho}\kr) \kl(\partial_s^2-\Delta_A\kr)\phi(T-s,\rho)  \d \rho\d s\d r.
\end{Eq*}
Here $\phi(T-s,\rho)$ has zero initial data at $s=0$ and regular enough, by expression of $u_F$ deduced in \Le{Le:u_e}, we know
\begin{Eq*}
&r^{\frac{1-A}{2}}\int_0^T \int_0^\infty \rho ^{\frac{A-1}{2}} I_A\kl(\frac{r^2+\rho^2-(T-s)^2}{2r\rho}\kr) \kl(\partial_s^2-\Delta_A\kr)\phi(T-s,\rho)  \d \rho\d s\\
=&\kl.\phi(T-s,r)\kr|_{s=T}=\phi(0,r).
\end{Eq*}
This gives \eqref{Eq:u_i} with $f=F=0$. The proof of $u_f$ and $u_F$ parts is similar, we leave them to the interested reader.

\section{Long-time existence for $A\in[2,3]$}\label{Se:3}
In this section, we will consider the case $A\in[2,3]$, and show the proof of \Th{Th:M_1}. Without loss of generality we assume $n\neq A$, otherwise $V=0$ then \eqref{Eq:U_o} reduced to the equation of \emph{Strauss} conjecture.

By the discussion in the last section, we begin to study the equation \eqref{Eq:u_o} and \eqref{Eq:u_l}.
\subsection{Estimate for homogeneous solution}
In this subsection, we will give an estimate of the homogeneous solution to \eqref{Eq:u_l}.
\begin{lemma}\label{Le:u0_e}
Let $A\in[2,3]$, $n\geq 2$ and assume $\supp(f,g)\subset [0,1)$. We have
\begin{Eq*}
|u_f+u_g|\lesssim& \kl<t+r\kr>^{\frac{1-A}{2}}\kl<t-r\kr>^{\frac{1-A}{2}}\kl(\|\rho g(\rho)\|_{L_\rho^\infty}+\|f(\rho)\|_{L_\rho^\infty}+\|\rho f'(\rho)\|_{L_\rho^\infty}\kr).
\end{Eq*}
Here and throughout the paper, $\kl<a\kr>$ stands for $\sqrt{|a|^2+4}$.
\end{lemma}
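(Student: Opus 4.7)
My plan is to estimate $u_f$ and $u_g$ directly from the integral formulas in \Le{Le:u_e}, using the pointwise bounds on $I_A$ from \Le{Le:I_p} together with the compact support of the data. I would partition $(t,r)\in\SR_+^2$ into four regimes: (I) $r-t\geq 1$, where causality combined with $I_A(\mu)=0$ for $\mu>1$ forces the integrands to vanish, so $u_f=u_g=0$; (II) $t+r\leq 4$, where $\langle t\pm r\rangle\lesssim 1$ and the claim reduces to a direct fixed-norm bound; (III) $t+r>4$ with $|t-r|<1$, the near-cone region; and (IV) $t+r>4$ with $t-r\geq 1$, the deep interior.

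The main estimate for $u_g$ runs as follows. In regime (III) we have $r\approx t\approx(t+r)/2$, so $r^{(1-A)/2}\lesssim\langle t+r\rangle^{(1-A)/2}$; the $\rho$-window of integration has length at most $1$, the logarithmic singularity of $I_A$ near $\mu=-1$ is integrable in $\rho$ by \eqref{Eq:I_p_3}, and using $|g(\rho)|\leq\|\rho g\|_\infty/\rho$ together with $\rho^{(A-3)/2}\in L^1(0,1)$ (which holds for $A\geq 2$) gives the claim (since $\langle t-r\rangle\lesssim 1$). In regime (IV), for non-odd $A$ we apply $|I_A(\mu)|\lesssim(1-\mu)^{(1-A)/2}$ from \eqref{Eq:I_p_2}; the identity $1-\mu=(t^2-(r-\rho)^2)/(2r\rho)$ together with $\rho\leq 1<t-r$ yields $t^2-(r-\rho)^2\geq(t^2-r^2)/2$, so the weights combine as
\[
r^{\frac{1-A}{2}}\rho^{\frac{A-1}{2}}(1-\mu)^{\frac{1-A}{2}}\lesssim\rho^{A-1}(t^2-r^2)^{\frac{1-A}{2}},
\]
and integrating in $\rho$ against $|g|\leq\|\rho g\|_\infty/\rho$ gives the bound since $(t^2-r^2)^{(1-A)/2}\approx\langle t+r\rangle^{(1-A)/2}\langle t-r\rangle^{(1-A)/2}$. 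For odd $A$ (only $A=3$ in our range), $I_A\equiv 0$ on $(-\infty,-1)$ by \eqref{Eq:I_p_5}, so $u_g=0$ in (IV). The bounded regime (II) is handled by an analogous direct calculation, using the alternative formula \eqref{Eq:u_g_c} when $r$ is small to avoid the singular factor $r^{(1-A)/2}$.

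For $u_f$ the main obstacle is that applying $\partial_t$ inside the formula from \Le{Le:u_e} generates a factor $I_A'(\mu)$ whose bound \eqref{Eq:I_p_4} contains a non-integrable $(1+\mu)^{-1}$ singularity near the characteristic $\rho=t-r$. To bypass this I would work instead with the equivalent representation \eqref{Eq:u_g_c} for the velocity-data solution $u_g^{[f]}$ (i.e., the formula with $g$ replaced by $f$), whose integrand is smooth in $t$, and differentiate inside to obtain
\[
u_f=C_\theta\int_0^1\!\!\int_{-1}^1\frac{f(R)+t\,\partial_tR\cdot f'(R)}{(1-\sigma^2)^\theta}\,\sigma^{A-1}\sqrt{1-\lambda^2}^{A-3}\,d\lambda\,d\sigma,
\]
with $R=\sqrt{r^2+t^2\sigma^2-2rt\sigma\lambda}$. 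The pointwise bound $|t\,\partial_tR|=\sigma|t(t\sigma-r\lambda)|/R\leq t\sigma$ follows from $|t\sigma-r\lambda|\leq R$, since $R^2-(t\sigma-r\lambda)^2=r^2(1-\lambda^2)\geq 0$. Paired with $|Rf'(R)|\leq\|\rho f'\|_\infty$ and the support constraint $R<1$ (which in particular yields $t\sigma\leq R+r\lesssim 1+r$), this brings the $f'$-term to the same structural form as the $f$-term. Reversing the change of variables used in Part 1 of the proof of \Le{Le:u_e} then yields an integral in $\rho$ of the same type as for $u_g$, and the region-by-region estimates above apply with $\|\rho g\|_\infty$ replaced by $\|f\|_\infty+\|\rho f'\|_\infty$.
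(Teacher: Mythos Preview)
Your overall strategy matches the paper's: split into regions according to the size of $t\pm r$, use the pointwise bounds on $I_A$ from \Le{Le:I_p} for $u_g$, and for $u_f$ pass through the alternative representation \eqref{Eq:u_g_c} so that differentiating in $t$ produces the harmless factor $(t\sigma^2-r\sigma\lambda)/R$, which is bounded by $\sigma\leq 1$ and therefore reduces $H_1$ to $u_{g=|f'|}$. That part is fine and is exactly what the paper does.

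There is, however, a genuine gap in your regime (IV). You invoke $|I_A(\mu)|\lesssim(1-\mu)^{(1-A)/2}$ ``from \eqref{Eq:I_p_2}'', but \eqref{Eq:I_p_2} is stated only for $\mu\leq -2$. In regime (IV) with $t-r$ close to $1$ and $\rho$ close to $1$, one has $\mu\in(-2,-1)$, and there $I_A$ carries the logarithmic singularity of \eqref{Eq:I_p_3}; since $(1-\mu)^{(1-A)/2}\approx 1$ on that range, your claimed bound fails near $\mu=-1^{-}$. The repair is immediate: for all $\mu<-1$ one has
\[
|I_A(\mu)|\lesssim(-1-\mu)^{\frac{1-A}{2}}
\]
(combining \eqref{Eq:I_p_2} with $|\ln|1+\mu||\lesssim|1+\mu|^{(1-A)/2}$ from \eqref{Eq:I_p_3}), and with this weight the factor $(t-r-\rho)^{(1-A)/2}$ appears, which is integrable in $\rho$ over $(0,1)$ and still produces $\langle t-r\rangle^{(1-A)/2}$ after integration. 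The paper avoids the issue altogether by taking the near-cone region to be $-1\leq t-r\leq 2$ rather than $|t-r|<1$, which forces $t-r-\rho\geq 1$ (hence $\mu\leq -2$) in the remaining interior region; either fix works. A minor related point: in regime~(II) you propose to use \eqref{Eq:u_g_c} to avoid the $r^{(1-A)/2}$ prefactor when $r$ is small, but then $1/R$ appears through $|g(R)|\leq\|\rho g\|_\infty/R$ and needs care. The paper instead stays with the $\rho$-integral and observes that the $r^{(1-A)/2}$ is exactly cancelled by the kernel estimates in both $\Omega_0$ and $\Omega_1$; that route is cleaner.
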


\begin{proof}[Proof of \Le{Le:u0_e}]
Here we define 
\begin{Eq*}
\Omega_0:=\{\rho:0<\rho<t-r\},\qquad \Omega_1:=\{\rho:|t-r|<\rho<\min(1,t+r)\},
\end{Eq*}
with $\Omega_0=\emptyset$ when $t<r$.

\setcounter{part0}{0}
\part[Estimate of $u_g$ with $A\in[2,3)$]
Firstly we consider $u_g$ with $A\in[2,3)$. For $u_g$, by \Le{Le:u_e} and \eqref{Eq:I_p_1} we have
\begin{Eq*}
u_g(t,r)=&r^{\frac{1-A}{2}}\kl(\int_{\Omega_0}+\int_{\Omega_1}\kr)I_A(\mu)\rho^{\frac{A-1}{2}}g(\rho)\d\rho
\equiv J_{0}+J_{1}.
\end{Eq*}
Also, for $A\in[2,3)$ we have 
\begin{Eq*}
|\ln|1+\mu||\lesssim |1+\mu|^{\frac{A-3}{2}}\lesssim |1+\mu|^{\frac{1-A}{2}},\qquad -2<\mu<1.
\end{Eq*}

\subpart[$t+r\leq 4$]
In this part, we have $\kl<t+r\kr>\approx \kl<t-r\kr>\approx 1$ and $r\lesssim 1$.
In the region of $\Omega_0$ where $\mu<-1$, 
by \eqref{Eq:I_p_2} and \eqref{Eq:I_p_3} we see
\begin{Eq*}
|I_A(\mu)|\lesssim& (-1-\mu)^{\frac{1-A}{2}}
=\kl(\frac{2r\rho}{(t+r+\rho)(t-r-\rho)}\kr)^{\frac{A-1}{2}}
\lesssim r^{\frac{A-1}{2}}(t-r-\rho)^{\frac{1-A}{2}}.
\end{Eq*}
Then we have
\begin{Eq*}
|J_{0}|\lesssim& \int_{0}^{t-r}(t-r-\rho)^{\frac{1-A}{2}} \rho^{\frac{A-3}{2}}\rho|g(\rho)|\d\rho\\
\lesssim&\kl\|\rho g(\rho)\kr\|_{L^\infty}
\lesssim\kl<t+r\kr>^{\frac{1-A}{2}}\kl<t-r\kr>^{\frac{1-A}{2}}\kl\|\rho g(\rho)\kr\|_{L^\infty}.
\end{Eq*}
In the region of $\Omega_1$ where $\mu>-1$, by \eqref{Eq:I_p_3} we see
\begin{Eq*}
|I_A(\mu)|\lesssim (1+\mu)^{\frac{A-3}{2}}
=\kl(\frac{2r\rho}{(t+r+\rho)(r+\rho-t)}\kr)^{\frac{3-A}{2}}
\lesssim \rho^{\frac{3-A}{2}}(r+\rho-t)^{\frac{A-3}{2}}.
\end{Eq*}
Thus we get
\begin{Eq*}
|J_{1}|\lesssim& r^{\frac{1-A}{2}}\int_{|t-r|}^{t+r}(r+\rho-t)^{\frac{A-3}{2}} \rho|g(\rho)|\d\rho\\
\lesssim&\kl\|\rho g(\rho)\kr\|_{L^\infty}
\lesssim\kl<t+r\kr>^{\frac{1-A}{2}}\kl<t-r\kr>^{\frac{1-A}{2}}\kl\|\rho^{\frac{A-1}{2}}g(\rho)\kr\|_{L^\infty}.
\end{Eq*}

\subpart[$t+r\geq 4$, $-1\leq t-r\leq 2$]
In this part, we have $\kl<t-r\kr>\approx 1$ and $r\approx \kl<t+r\kr>$.
In the region of $\Omega_0$ where $\mu<-1$, we have 
\begin{Eq*}
|I_A(\mu)|\lesssim& (-1-\mu)^{\frac{1-A}{2}}
=\kl(\frac{2r\rho}{(t+r+\rho)(t-r-\rho)}\kr)^{\frac{A-1}{2}}
\lesssim \rho^{\frac{A-1}{2}}(t-r-\rho)^{\frac{1-A}{2}}.
\end{Eq*}
Then we get
\begin{Eq*}
|J_{0}|\lesssim& \kl<t+r\kr>^{\frac{1-A}{2}}\int_{0}^{t-r}(t-r-\rho)^{\frac{1-A}{2}} \rho^{A-2}\rho|g(\rho)|\d\rho\\
\lesssim&\kl<t+r\kr>^{\frac{1-A}{2}}(t-r)^{\frac{A-1}{2}}\kl\|\rho g(\rho)\kr\|_{L^\infty}
\lesssim\kl<t+r\kr>^{\frac{1-A}{2}}\kl<t-r\kr>^{\frac{1-A}{2}}\kl\|\rho g(\rho)\kr\|_{L^\infty}.
\end{Eq*}
In the region of $\Omega_1$ where $\mu>-1$, we also have 
\begin{Eq*}
|I_A(\mu)|\lesssim (1+\mu)^{\frac{A-3}{2}}
=\kl(\frac{2r\rho}{(t+r+\rho)(r+\rho-t)}\kr)^{\frac{3-A}{2}}
\lesssim \rho^{\frac{3-A}{2}}(r+\rho-t)^{\frac{A-3}{2}}.
\end{Eq*}
Then we see
\begin{Eq*}
|J_{1}|\lesssim& \kl<t+r\kr>^{\frac{1-A}{2}}\int_{|t-r|}^{2}(r+\rho-t)^{\frac{A-3}{2}} \rho|g(\rho)|\d\rho\\
\lesssim&\kl<t+r\kr>^{\frac{1-A}{2}}(r+2-t)^{\frac{A-1}{2}}\kl\|\rho g(\rho)\kr\|_{L^\infty}
\lesssim\kl<t+r\kr>^{\frac{1-A}{2}}\kl<t-r\kr>^{\frac{1-A}{2}}\kl\|\rho g(\rho)\kr\|_{L^\infty}.
\end{Eq*}

\subpart[$t+r\geq 4$, $t-r\geq 2$]
In this part, we have $t+r\gtrsim\kl<t+r\kr>$ and  $t-r-1\gtrsim\kl<t-r\kr>$. Here $\Omega_1=\emptyset$ so we only need to consider $\Omega_0$ where $\mu<-1$. Again we see  
\begin{Eq*}
|I_A(\mu)|\lesssim& (-1-\mu)^{\frac{1-A}{2}}
=\kl(\frac{2r\rho}{(t+r+\rho)(t-r-\rho)}\kr)^{\frac{A-1}{2}}\\
\lesssim& r^{\frac{A-1}{2}}\rho^{\frac{A-1}{2}}\kl<t+r\kr>^{\frac{1-A}{2}}\kl<t-r\kr>^{\frac{1-A}{2}}.
\end{Eq*}
Then we have
\begin{Eq*}
|u_g|\lesssim& \kl<t+r\kr>^{\frac{1-A}{2}}\kl<t-r\kr>^{\frac{1-A}{2}}\int_{0}^{1} \rho^{A-1}|g(\rho)|\d\rho\\
\lesssim&\kl<t+r\kr>^{\frac{1-A}{2}}\kl<t-r\kr>^{\frac{1-A}{2}}\kl\|\rho g(\rho)\kr\|_{L^\infty}.
\end{Eq*}
In summary, we finish the estimate of $u_g$ when $A<3$.

\part[Estimate of $u_f$ with $A\in[2,3)$]
Next we consider $u_f$. For simplicity let $u_{g=\phi}$ stand for $u_g$ with $g=\phi$.
By \Le{Le:u_e} and the expression of $u_g$ \eqref{Eq:u_g_c}, we know
\begin{Eq*}
u_f=&\partial_t \kl(u_{g=f}\kr)\\
=&\frac{1}{\Gamma\kl(\frac{A-1}{2}\kr)\Gamma\kl(\frac{3-A}{2}\kr)} \partial_t\kl(t\int_{0}^{1}\int_{-1}^{1} \frac{f\kl(\sqrt{r^2+t^2\sigma^2-2rt\sigma\lambda}\kr)}{(1-\sigma^2)^{\frac{A-1}{2}}}\sigma^{A-1}\sqrt{1-\lambda^2}^{A-3}\d \lambda\d\sigma\kr)\\
\lesssim &t\int_{0}^{1}\int_{-1}^{1} \frac{\kl|t\sigma^2-r\sigma\lambda\kr|}{\sqrt{r^2+t^2\sigma^2-2rt\sigma\lambda}}\frac{\kl|f'\kl(\sqrt{r^2+t^2\sigma^2-2rt\sigma\lambda}\kr)\kr|}{(1-\sigma^2)^{\frac{A-1}{2}}}\sigma^{A-1}\sqrt{1-\lambda^2}^{A-3}\d \lambda\d\sigma\\
&+\int_{0}^{1}\int_{-1}^{1} \frac{\kl|f\kl(\sqrt{r^2+t^2\sigma^2-2rt\sigma\lambda}\kr)\kr|}{(1-\sigma^2)^{\frac{A-1}{2}}}\sigma^{A-1}\sqrt{1-\lambda^2}^{A-3}\d \lambda\d\sigma\\
\equiv&H_1+H_2.
\end{Eq*}
Since $A\in[2,3)$, we can easily find that $H_2\lesssim \|f\|_{L^\infty}$. Meanwhile, we find that
\begin{Eq*}
H_2\approx&t^{-1}u_{g=|f|}\lesssim t^{-1} \kl<t+r\kr>^{\frac{1-A}{2}}\kl<t-r\kr>^{\frac{1-A}{2}}\|f(\rho)\|_{L_\rho^\infty}.
\end{Eq*}
Adding together, we finish the estimate of $H_2$. Next we consider $H_1$. Noticing that when $\lambda\in[-1,1]$, we have
\begin{Eq*}
\kl(\frac{t\sigma^2-r\sigma\lambda}{\sqrt{r^2+t^2\sigma^2-2rt\sigma\lambda}}\kr)^2=&\frac{r^2\sigma^2\lambda^2+t^2\sigma^4-2rt\sigma^3\lambda}{r^2+t^2\sigma^2-2rt\sigma\lambda}\leq\sigma^2\leq 1.
\end{Eq*}
Then we calculate
\begin{Eq*}
H_1\lesssim u_{g=|f'|}\lesssim \kl<t+r\kr>^{\frac{1-A}{2}}\kl<t-r\kr>^{\frac{1-A}{2}}\kl\|\rho f'(\rho)\kr\|_{L_\rho^\infty}.
\end{Eq*}
Adding together, we finish the estimate of $u_f$ when $A<3$. 

\part[Estimate of $u_g$ and $u_f$ for $A=3$]
When $A=3$, the estimate for $u_g$ is similar, where we use \eqref{Eq:I_p_5} and \eqref{Eq:I_p_6} instead of \eqref{Eq:I_p_2} and \eqref{Eq:I_p_3}. As for $u_f$, we easily calculate 
\begin{Eq*}
u_f=\frac{(t+r)f(t+r)-(t-r)f(|t-r|)}{2r}
\end{Eq*}
with $\supp u_f\subset \{(t,r):|t-r|\leq 1\}$. When $r< t$, by mean value theorem we see 
\begin{Eq*}
|u_f|=\kl|\frac{(t+r)f(t+r)-(t-r)f(t-r)}{2r}\kr|\leq \|\partial_\rho(\rho f(\rho))\|_{L_\rho^\infty}.
\end{Eq*}
When $t<r$, we see 
\begin{Eq*}
|u_f|\leq|f(t+r)|+|f(r-t)|\lesssim \|f(\rho)\|_{L_\rho^\infty}.
\end{Eq*}
In summary, we obtain the desired estimate of $u_g$ and $u_f$ for $A=3$, and finish the proof of \Le{Le:u0_e}.
\end{proof}

\subsection{Estimate for non-homogeneous solution} 
In this subsection, we will omit the initial data and give the estimate of solution to the nonlinear equation \eqref{Eq:u_o}. For simplicity, we shift the time variable and consider the equation
\begin{Eq}\label{Eq:v_o}
\begin{cases}
\kl(\partial_t^2-\partial_r^2-(A-1)r^{-1}\partial_r\kr)v(t,r)=r^{\frac{(A-n)p+n-A}{2}}G(t,r)\\
v(4,r)=0,\qquad v_t(4,r)=0.
\end{cases}
\end{Eq}

\begin{lemma}\label{Le:v_e}
Define $\Omega:=\{(t,r)\in [4,\infty)\times \SR_+:  t>r+2\}$ and $\Lambda(t,r):=\{(s,\rho)\in\Omega:s+\rho<t+r,s-\rho<t-r\}$. We will show that, if $v$ solves the equation \eqref{Eq:v_o} with $\supp v\subset \Omega$, then for any $(t,r)\in\Omega$ and $k=1,2,3$ we have
\begin{Eq}\label{Eq:v_e}
\kl<t+r\kr>^{\frac{A-1}{2}} |v|\lesssim N_k(t-r)\|\omega_k^p G\|_{L_{s,\rho}^\infty(\Lambda)}.
\end{Eq}
Here
\begin{Eq*}
\omega_k(t,r):=&\kl<t+r\kr>^{\frac{A-1}{2}}\beta_k(t-r),\qquad k=1,2,3;\\
\beta_k(t-r):=&\begin{cases}
\kl<t-r\kr>^{\frac{A-1}{2}},&k=1,~for~ p_m<p<p_M,\\
\kl<t-r\kr>^{\frac{(n-1)p-n-1}{2}},& k=2,~for~ p_m<p<p_t,\\
\kl<t-r\kr>^{\frac{A-1}{2}}(\ln\kl<t-r\kr>)^{-1},&k=3,~for~p=p_t>p_d;
\end{cases}
\end{Eq*}
\begin{Eq*} 
N_1(t-r):=&\begin{cases}
\kl<t-r\kr>^{\frac{1-A}{2}},&p>\max(p_d,p_t)~or~p=p_d>p_t~or~ p_F<p<p_d,\\
\kl<t-r\kr>^{\frac{1-A}{2}}\ln\kl<t-r\kr>,&p=p_t>p_d~or~p=p_F<p_d,\\
\kl<t-r\kr>^\frac{(1-n)p+n+1}{2},&p_t>p>p_d,\\
\kl<t-r\kr>^\frac{(-n-A+2)p+n+3}{2},&p<\min(p_d,p_F),\\
\kl<t-r\kr>^{\frac{(-n-A+2)p+n+3}{2}}\ln\kl<t-r\kr>,& p=p_d< p_t,\\
\kl<t-r\kr>^{\frac{1-A}{2}} (\ln \kl<t-r\kr>)^2,& p=p_d= p_t;
\end{cases}\\
N_2(t-r):=&\begin{cases}
\kl<t-r\kr>^{\frac{(1-n)p+n+1}{2}}&p_S<p<p_t,\\
\kl<t-r\kr>^{\frac{(1-n)p+n+1}{2}}\ln \kl<t-r\kr>&p=p_S<p_t,\\
\kl<t-r\kr>^{\frac{(1-n)p^2+2p+n+3}{2}}&p_m<p<\min(p_S,p_t);
\end{cases}\\
N_3(t-r):=&
\kl<t-r\kr>^{\frac{1-A}{2}}\ln\kl<t-r\kr>,\qquad p=p_t>p_d.
\end{Eq*}
\end{lemma}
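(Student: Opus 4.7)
The plan is to start from the explicit representation of the forced solution in Lemma \ref{Le:u_e}. Since $v$ solves \eqref{Eq:v_o} with zero data at $t=4$ and forcing $r^{\frac{(A-n)p+n-A}{2}}G$ supported in $\Omega$, shifting $t\mapsto t-4$ and applying the formula for $u_F$ gives
\begin{Eq*}
v(t,r) = r^{\frac{1-A}{2}} \int\!\!\int_{\Lambda(t,r)} \rho^{\frac{A-1}{2}+\frac{(A-n)p+n-A}{2}} G(s,\rho)\, I_A(\mu)\, d\rho\, ds,
\end{Eq*}
where $\mu=\frac{r^2+\rho^2-(t-s)^2}{2r\rho}$ and the support condition on the forcing, together with the support of $I_A$ noted in the remark after Lemma \ref{Le:u_e}, restricts the domain to $\Lambda(t,r)$. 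The first step is to insert the trivial bound $|G(s,\rho)|\le \|\omega_k^p G\|_{L^\infty(\Lambda)}\,\omega_k(s,\rho)^{-p}$, reducing the proof to a purely weighted integral inequality where $G$ has been replaced by $\omega_k^{-p}$.

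The second step is to switch to characteristic coordinates $\alpha=s+\rho$, $\beta=s-\rho$ (so the region $\Lambda(t,r)$ becomes a rectangle $\{\alpha_0\le \alpha\le t+r,\ \beta_0\le \beta\le t-r\}$) and to combine this with the pointwise bounds for $I_A$ from Lemma \ref{Le:I_p}. For $A\notin 1+2\SZ_+$ the two relevant zones are $\mu\approx -1$ (the cone $\rho+r=t-s$) and $\mu\approx 1$ (the cone $\rho-r=-(t-s)$), where \eqref{Eq:I_p_2}--\eqref{Eq:I_p_3} give $|I_A|\lesssim(\text{dist to nearest cone})^{(1-A)/2}$ up to a harmless logarithm; for odd $A$ estimates \eqref{Eq:I_p_5}--\eqref{Eq:I_p_6} are even cleaner. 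Combining the Jacobian, the power of $\rho$ in front, and the factor $\langle\alpha\rangle^{-p(A-1)/2}\beta_k(\beta)^{-p}$ coming from $\omega_k^{-p}$, one sees that the $\alpha$-integral always converges (because $p>p_m=\frac{n+1}{n-1}$ forces $p(A-1)/2>1$ in the relevant range), contributing the prefactor $\langle t+r\rangle^{(1-A)/2}$ that cancels against $r^{(1-A)/2}$ and yields the $\langle t+r\rangle^{(1-A)/2}$ decay asserted in \eqref{Eq:v_e}.

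The remaining task, and this is where the bulk of the work lies, is the $\beta$-integral
\begin{Eq*}
N_k(t-r) \approx \int_{\beta_0}^{t-r} \beta_k(\beta)^{-p}\,\langle\beta\rangle^{(A-3)/2}\, d\beta
\end{Eq*}
(modulo constants and, for $k=2,3$, an extra factor born from the truncation $s+\rho\le t+r$ that appears because $\beta_2,\beta_3$ are not the natural $\langle\beta\rangle^{(A-1)/2}$). Here the entire case analysis of $N_1,N_2,N_3$ is driven by the sign of the exponent $-p\cdot(\text{power in }\beta_k)+\frac{A-3}{2}+1$: it is negative (giving the leading $\langle t-r\rangle^{(1-A)/2}$ behavior) precisely when $p$ is past the appropriate Strauss/Fujita threshold, it is zero (giving the logarithm) at the threshold, and it is positive (producing the growing factors $\langle t-r\rangle^{(1-n)p+n+1}$ etc.) below the threshold. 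The many subcases in the statement of $N_1$ correspond to which of $p_d,p_F,p_t$ is the binding threshold for the weight $\beta_1$, and similarly $N_2,N_3$ are obtained by choosing $\beta_k$ optimally for the ranges $p<p_S$ and $p=p_t$, respectively.

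The main obstacle is therefore the bookkeeping in this case analysis, in particular the borderline cases where two thresholds coincide (e.g. $p=p_d=p_t$, producing a $(\ln\langle t-r\rangle)^2$ factor because both a near-cone singularity and the $\beta$-integral are critical simultaneously). To keep the argument clean I would prove a single master estimate
\begin{Eq*}
\int_{\beta_0}^{t-r}\langle\beta\rangle^{-a}(\ln\langle\beta\rangle)^{b}\,d\beta \lesssim \langle t-r\rangle^{\max(1-a,0)}(\ln\langle t-r\rangle)^{b+\mathbf{1}_{a=1}},
\end{Eq*}
apply it twice when two near-cone singularities compound, and then simply substitute the correct exponents for each case of $(k,p)$. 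The choice $k=2$ versus $k=1$ for $p<p_S$ (and $k=3$ versus $k=1$ for $p=p_t>p_d$) is dictated by the requirement that the $\omega_k$-weighted $L^\infty$ norm of $G$ be the right one to close the bootstrap in the subsequent nonlinear iteration; at the linear estimate level these just amount to choosing a smaller weight $\beta_k$ so that the $\beta$-integral converges better at the expense of a worse constant $N_k$.
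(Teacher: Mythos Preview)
Your overall strategy---start from the $u_F$ formula of Lemma~\ref{Le:u_e}, insert $|G|\le\|\omega_k^pG\|_{L^\infty}\omega_k^{-p}$, bound $I_A(\mu)$ via Lemma~\ref{Le:I_p}, and integrate in characteristic coordinates---is exactly the paper's approach. But the execution you sketch has a real gap.

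Your claim that ``the $\alpha$-integral always converges (because $p>p_m=\frac{n+1}{n-1}$ forces $p(A-1)/2>1$)'' is false: the condition $p(A-1)/2>1$ is $p>p_d=2/(A-1)$, not $p>p_m$, and the lemma explicitly covers ranges with $p_m<p<p_d$ (this is precisely why $N_1$ has separate branches $p<p_d$ versus $p>p_d$). More importantly, the $(\alpha,\beta)$ integrals do not decouple the way you suggest: the integrand still carries the factor $\rho^{\frac{(A-n)p+n-1}{2}}=\big(\tfrac{\alpha-\beta}{2}\big)^{\frac{(A-n)p+n-1}{2}}$ together with $I_A(\mu)$, neither of which is a function of $\alpha$ or $\beta$ alone. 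The paper handles this by a six-fold decomposition $\Lambda=\bigcup_{i=1}^3\bigcup_{j=1}^2\Lambda_{ij}$: three bands in $s+\rho$ (namely $(t-r,t+r)$, $((t-r)/2,t-r)$, $(3,(t-r)/2)$, which separate the regions $\mu>-1$ and $\mu<-1$ and localize the $I_A$ singularity), crossed with $\rho\le s/2$ versus $\rho\ge s/2$ (which forces either $s-\rho\approx s+\rho$ or $\rho\approx s+\rho$, so that in each piece the $\rho$-power and the $\beta_k$-weight collapse onto a single variable). On top of this one must split $r\le t/2$ versus $r\ge t/2$ to convert $r^{(1-A)/2}\langle t+r\rangle^{(A-1)/2}$ into something usable, since the $\langle t+r\rangle$ decay does not simply drop out of an $\alpha$-integral. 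The constraint $p<p_M$ (ensuring $\frac{(A-n)p+n-1}{2}>-1$, hence integrability at $\rho=0$) and the constraint $p>p_m$ (ensuring $\frac{(1-n)p+n-1}{2}<-1$, needed for the large-$\xi$ integral in the region $\Lambda_{12}$ when $r\ge t/2$) enter at specific places in this decomposition, not as a blanket convergence criterion.

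Your master $\beta$-integral and the logic for how the thresholds $p_d,p_F,p_S,p_t$ arise are correct in spirit, but only after the decomposition above has reduced each $J_{ij;k}$ to a one-variable integral; the prefactor $\langle t-r\rangle^{\frac{(1-n)p+n+1}{2}}$ or $\langle t-r\rangle^{\frac{1-A}{2}}$ in front of that integral depends on which $\Lambda_{ij}$ you are in, so a single formula like your $N_k\approx\int\beta_k^{-p}\langle\beta\rangle^{(A-3)/2}d\beta$ does not cover all pieces.
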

\begin{remark}
By the definition of $\beta_k$, we can easily find that for any $\xi, \eta>2$
with $\xi/\eta\in (1/2, 2)$, we have
\begin{Eq*}
\beta_k(\xi)\approx \beta_k(\eta).
\end{Eq*}
Also if $2<\eta_1<\eta_2$, we have
\begin{Eq*}
\beta_k(\eta_1)\lesssim\beta_k(\eta_2).
\end{Eq*}
\end{remark}

\begin{proof}[Proof of \Le{Le:v_e}]
Using \Le{Le:u_e}, we find 
\begin{Eq*}
 v=r^{\frac{1-A}{2}} \int_{\Lambda} I_A(\mu) \rho^{\frac{(A-n)p+n-1}{2}}G(s,\rho)\d\rho\d s,
\end{Eq*}
with $\mu:=\frac{r^2+\rho^2-(t-s)^2}{2 r \rho}$. To reach \eqref{Eq:v_e}, we calculate 
\begin{Eq*}
|\kl<t+r\kr>^\frac{A-1}{2} v|\leq& r^{\frac{1-A}{2}}\kl<t+r\kr>^\frac{A-1}{2} \|\omega_k^p G\|_{L_{s,\rho}^\infty (\Lambda)}\int_{\Lambda} |I_A(\mu)| \rho^{\frac{(A-n)p+n-1}{2}}\omega_k^{-p}\d\rho\d s.
\end{Eq*}
Thus, we only need to show that
\begin{Eq}\label{Eq:Jijk}
J_{ij;k}:=r^{\frac{1-A}{2}}\kl<t+r\kr>^\frac{A-1}{2} \int_{\Lambda_{ij}} |I_A(\mu)|\rho^{\frac{(A-n)p+n-1}{2}}\omega_k^{-p}\d\rho\d s\lesssim N_k(t-r)
\end{Eq}
for $i=1,2,3$, $j=1,2$ with
\begin{Eq*}
\Lambda_{11}:=&\{(s,\rho)\in\Lambda: s+\rho\in(t-r,t+r), \rho\leq s/2\};\\
\Lambda_{12}:=&\{(s,\rho)\in\Lambda: s+\rho\in(t-r,t+r), \rho\geq s/2\};\\
\Lambda_{21}:=&\{(s,\rho)\in\Lambda: s+\rho\in\kl(\frac{t-r}{2},t-r\kr), \rho\leq s/2\};\\
\Lambda_{22}:=&\{(s,\rho)\in\Lambda: s+\rho\in\kl(\frac{t-r}{2},t-r\kr), \rho\geq s/2\};\\
\Lambda_{31}:=&\{(s,\rho)\in\Lambda: s+\rho\in\kl(3,\frac{t-r}{2}\kr), \rho\leq s/2\};\\
\Lambda_{32}:=&\{(s,\rho)\in\Lambda: s+\rho\in\kl(3,\frac{t-r}{2}\kr), \rho\geq s/2\}.
\end{Eq*}

\begin{figure}[H]
\centering
\includegraphics[width=0.99\textwidth]{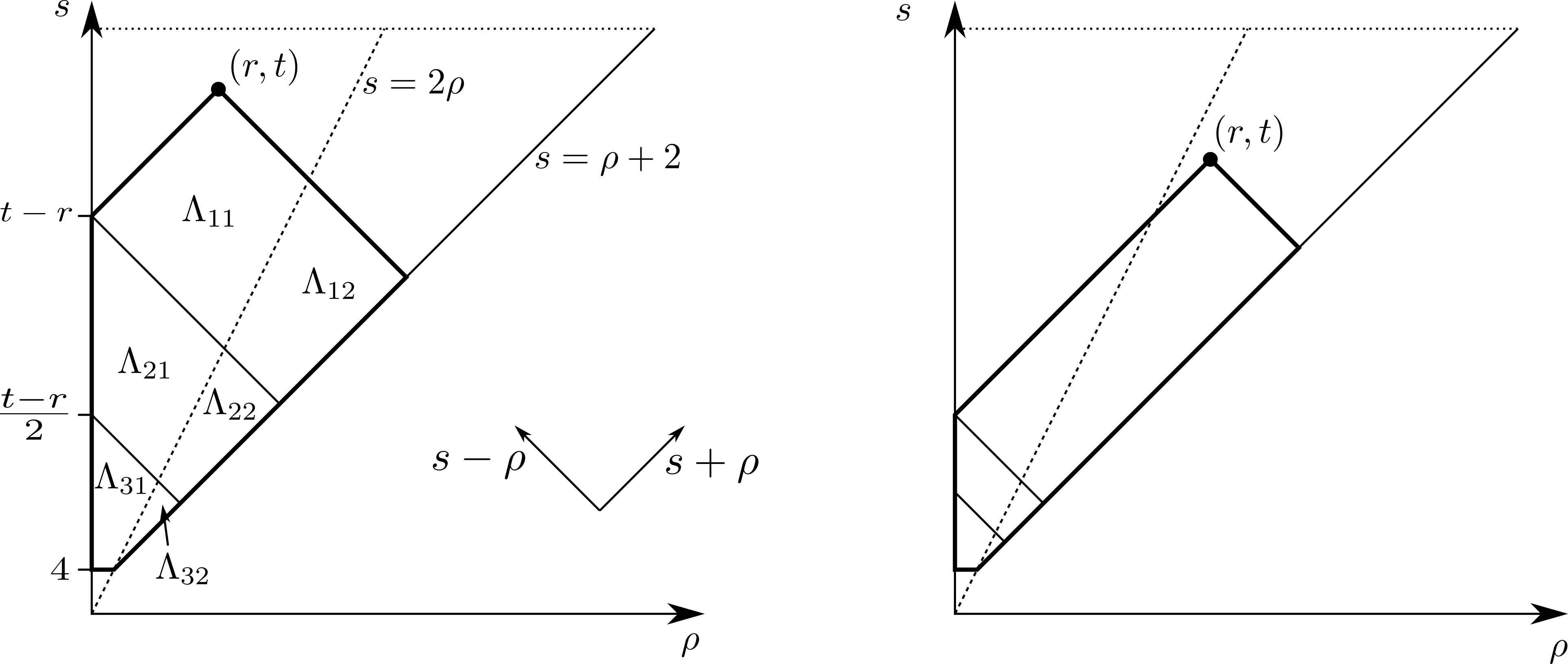}
\end{figure}
It's easy to find that
\begin{Eq}\label{Eq:srho_r}
\begin{cases}
s+\rho\leq 3(s-\rho)\leq 3(s+\rho),&(s,\rho)\in \Lambda_{11}\cup\Lambda_{21}\cup\Lambda_{31},\\
s+\rho\leq 3\rho\leq 3(s+\rho),&(s,\rho)\in \Lambda_{12}\cup\Lambda_{22}\cup\Lambda_{32}.
\end{cases}
\end{Eq}
Then, a quick calculation shows
\begin{Eq}\label{Eq:trsrho_r}
\begin{cases}
t-r\leq s+\rho \leq \min\{t+r,3(t-r)\}, \quad (t-r)/3\leq s-\rho\leq t-r, &(s,\rho)\in \Lambda_{11},\\
t-r\leq s+\rho\leq t+r,\quad 2\leq s-\rho\leq \min\{(t+r)/3,t-r\}, &(s,\rho)\in \Lambda_{12},\\
(t-r)/2\leq s+\rho \leq t-r, \quad (s+\rho)/3\leq s-\rho\leq s+\rho, &(s,\rho)\in \Lambda_{21},\\
(t-r)/2\leq s+\rho\leq t-r,\quad 2\leq s-\rho\leq (t-r)/3, &(s,\rho)\in \Lambda_{22},\\
4\leq s+\rho \leq (t-r)/2, \quad (s+\rho)/3\leq s-\rho\leq s+\rho, &(s,\rho)\in \Lambda_{31},\\
6\leq s+\rho\leq (t-r)/2,\quad 2\leq s-\rho\leq (s+\rho)/3, &(s,\rho)\in \Lambda_{32}.
\end{cases}
\end{Eq}
From now on, we introduce $\xi:=s+\rho$ and $\eta:=s-\rho$. 
We will always adopt \eqref{Eq:srho_r} and \eqref{Eq:trsrho_r} in each region.

\setcounter{part0}{0}
\part[Preparation for $A\in[2,3)$ with $r\leq t/2$]
We will firstly consider $A\in[2,3)$,
notice that $\mu=\frac{r^2+\rho^2-(t-s)^2}{2r\rho}< -1$ when $s+\rho< t-r$, and $\mu> -1$ when $s+\rho> t-r$. 
In this part we have 
\begin{Eq*}
t+r\leq 3(t-r)\leq 3(t+r).
\end{Eq*}
In the region of $\Lambda_{11}$,
using \eqref{Eq:I_p_3} we have
\begin{Eq}\label{Eq:I_e1}
|I_A(\mu)|\lesssim& (1+\mu)^{\frac{A-3}{2}}
=\kl(\frac{2r\rho}{(r+\rho+t-s)(r+\rho-t+s)}\kr)^{\frac{3-A}{2}}
\lesssim \kl(\frac{\rho}{r+\rho-t+s}\kr)^{\frac{3-A}{2}}.
\end{Eq}
Then we find
\begin{Eq*}
J_{11}\lesssim & r^{\frac{1-A}{2}} \kl<t-r\kr>^{\frac{A-1}{2}} \int_{\Lambda_{11}} \rho^{\frac{(A-n)p+n-A+2}{2}}\kl<s+\rho\kr>^{-\frac{A-1}{2}p}\beta(s+\rho)^{-p}\kl({r+\rho-t+s}\kr)^{\frac{A-3}{2}}\d\rho\d s\\
\lesssim &r^{\frac{1-A}{2}}\kl<t-r\kr>^{\frac{(1-A)p+A-1}{2}}\beta(t-r)^{-p}\int_{t-r}^{t+r}\int_{(t-r)/3}^{t-r} (\xi-\eta)^{\frac{(A-n)p+n-A+2}{2}}\kl({\xi-(t-r)}\kr)^{\frac{A-3}{2}}\d \eta\d\xi\\
\lesssim& r^{\frac{1-A}{2}}\kl<t-r\kr>^{\frac{(1-n)p+n+3}{2}}\beta(t-r)^{-p}\int_{t-r}^{t+r}\kl({\xi-(t-r)}\kr)^{\frac{A-3}{2}}\d\xi\\
\lesssim&\kl<t-r\kr>^{\frac{(1-n)p+n+3}{2}}\beta(t-r)^{-p},
\end{Eq*}
where we noticed $\frac{(A-n)p+n-A+2}{2}> -1$ while $p<p_M$.
In the region of $\Lambda_{12}$, we also have \eqref{Eq:I_e1}. Then we find
\begin{Eq*}
J_{12}\lesssim & r^{\frac{1-A}{2}} \kl<t-r\kr>^{\frac{A-1}{2}}\int_{\Lambda_{12}}\kl<s+\rho\kr>^{\frac{(1-n)p+n-A+2}{2}}\beta(s-\rho)^{-p}\kl({r+\rho-t+s}\kr)^{\frac{A-3}{2}}\d\rho\d s\\
\lesssim & r^{\frac{1-A}{2}}\kl<t-r\kr>^{\frac{(1-n)p+n+1}{2}}\int_{t-r}^{t+r}\kl({\xi-(t-r)}\kr)^{\frac{A-3}{2}}\d\xi \int_2^{(t+r)/3} \beta(\eta)^{-p}\d\eta\\
\lesssim &\kl<t-r\kr>^{\frac{(1-n)p+n+1}{2}} \int_2^{t-r} \beta(\eta)^{-p}\d\eta.
\end{Eq*}
In the region of $\Lambda_{21}$,
using \eqref{Eq:I_p_2} and \eqref{Eq:I_p_3}
we have
\begin{Eq}\label{Eq:I_e2}
|I_A(\mu)|\lesssim& (-1-\mu)^{\frac{1-A}{2}}
=\kl(\frac{2r\rho}{(t+r-s+\rho)(t-r-s-\rho)}\kr)^{\frac{A-1}{2}}
\lesssim \kl(\frac{r}{t-r-s-\rho}\kr)^{\frac{A-1}{2}}.
\end{Eq}
Then we find
\begin{Eq*}
J_{21}\lesssim & \kl<t-r\kr>^{\frac{A-1}{2}} \int_{\Lambda_{21}} \rho^{\frac{(A-n)p+n-1}{2}}\kl<s+\rho\kr>^{\frac{(1-A)p}{2}}\beta(s+\rho)^{-p}\kl({t-r-s-\rho}\kr)^{\frac{1-A}{2}}\d\rho\d s\\
\lesssim &\kl<t-r\kr>^{\frac{(1-A)p+A-1}{2}}\beta(t-r)^{-p} \int_{(t-r)/2}^{t-r}\int_{\xi/3}^{\xi} (\xi-\eta)^{\frac{(A-n)p+n-1}{2}}\kl({t-r-\xi}\kr)^{\frac{1-A}{2}}\d \eta\d\xi\\
\lesssim &\kl<t-r\kr>^{\frac{(1-n)p+n+A}{2}}\beta(t-r)^{-p} \int_{(t-r)/2}^{t-r} \kl(t-r-\xi\kr)^{\frac{1-A}{2}}\d\xi\\
\lesssim&\kl<t-r\kr>^{\frac{(1-n)p+n+3}{2}}\beta(t-r)^{-p},
\end{Eq*}
where we require $p<p_M$ to ensure that $\frac{(A-n)+n-1}{2}> -1$.
In the region of $\Lambda_{22}$, we still have 
\eqref{Eq:I_e2}. Then we find
\begin{Eq*}
J_{22}\lesssim & \kl<t-r\kr>^{\frac{A-1}{2}}  \int_{\Lambda_{22}}\kl<s+\rho\kr>^{\frac{(1-n)p+n-1}{2}}\beta(s-\rho)^{-p}\kl({t-r-s-\rho}\kr)^{\frac{1-A}{2}}\d\rho\d s\\
\lesssim & \kl<t-r\kr>^{\frac{(1-n)p+n+A-2}{2}}\int_{(t-r)/2}^{t-r}\kl(t-r-\xi\kr)^{\frac{1-A}{2}}\d\xi \int_2^{(t-r)/3}\beta(\eta)^{-p}\d\eta\\
\lesssim &\kl<t-r\kr>^{\frac{(1-n)p+n+1}{2}} \int_2^{(t-r)/3} \beta(\eta)^{-p}\d\eta.
\end{Eq*}
In the region of $\Lambda_{31}$
we have 
\begin{Eq}\label{Eq:I_e3}
|I_A(\mu)|\lesssim& (-1-\mu)^{\frac{1-A}{2}}
=\kl(\frac{2r\rho}{(t+r-s+\rho)(t-r-s-\rho)}\kr)^{\frac{A-1}{2}}
\lesssim\kl(\frac{r\rho}{(t-r)^2}\kr)^{\frac{A-1}{2}}.
\end{Eq}
Then we find
\begin{Eq*}
J_{31}\lesssim &\kl<t-r\kr>^{\frac{A-1}{2}} \int_{\Lambda_{31}} \rho^{\frac{(A-n)p+n+A-2}{2}}\kl<s+\rho\kr>^{\frac{(1-A)p}{2}}\beta(s+\rho)^{-p}\kl({t-r}\kr)^{-A+1}\d\rho\d s\\
\lesssim &\kl<t-r\kr>^{\frac{1-A}{2}}\int_4^{(t-r)/2}\int_{\xi/3}^{\xi} (\xi-\eta)^{\frac{(A-n)p+n+A-2}{2}}\kl<\xi\kr>^{\frac{(1-A)p}{2}}\beta(\xi)^{-p}\d \eta\d\xi\\
\lesssim &\kl<t-r\kr>^{\frac{1-A}{2}}\int_4^{(t-r)/2}\kl<\xi\kr>^{\frac{(1-n)p+n+A}{2}}\beta(\xi)^{-p}  \d\xi,
\end{Eq*} 
where we noticed $\frac{(A-n)p+n+A-2}{2}>-1$ while $p<p_M$.
In the region of $\Lambda_{32}$ we still have \eqref{Eq:I_e3}.
Then we find
\begin{Eq*}
J_{32}\lesssim &\kl<t-r\kr>^{\frac{A-1}{2}} \int_{\Lambda_{32}}\kl<s+\rho\kr>^{\frac{(1-n)p+n+A-2}{2}}\beta(s-\rho)^{-p}\kl({t-r}\kr)^{-A+1}\d\rho\d s\\
\lesssim &\kl<t-r\kr>^{\frac{1-A}{2}}\int_6^{(t-r)/2}\int_2^{\xi/3}\kl<\xi\kr>^{\frac{(1-n)p+n+A-2}{2}}\beta(\eta)^{-p}\d \eta\d\xi.
\end{Eq*}

\part[Preparation for $A\in[2,3)$ with $r\geq t/2$]
In this part we have
\begin{Eq*}
t+r\leq 3r\leq 3(t+r).
\end{Eq*}
In the region of $\Lambda_{11}$, we have \eqref{Eq:I_e1}.
Then we find
\begin{Eq*}
J_{11}'\lesssim & \int_{\Lambda_{11}} \rho^{\frac{(A-n)p+n-A+2}{2}}\kl<s+\rho\kr>^{\frac{(1-A)p}{2}}\beta(s+\rho)^{-p}\kl({r+\rho-t+s}\kr)^{\frac{A-3}{2}}\d\rho\d s\\
\lesssim &\kl<t-r\kr>^{\frac{(1-A)p}{2}}\beta(t-r)^{-p} \int_{t-r}^{3(t-r)}\int_{(t-r)/3}^{t-r} (\xi-\eta)^{\frac{(A-n)p+n-A+2}{2}}\kl({\xi-(t-r)}\kr)^{\frac{A-3}{2}}\d \eta\d\xi\\
\lesssim &\kl<t-r\kr>^{\frac{(1-n)p+n-A+4}{2}}\beta(t-r)^{-p} \int_{t-r}^{3(t-r)}\kl({\xi-(t-r)}\kr)^{\frac{A-3}{2}}\d\xi\\
\lesssim&\kl<t-r\kr>^{\frac{(1-n)p+n+3}{2}}\beta(t-r)^{-p},
\end{Eq*}
again $\frac{(A-n)p+n-A+2}{2}> -1$ since $p<p_M$.
In the region of $\Lambda_{12}$, we have \eqref{Eq:I_e1}.
Then we find
\begin{Eq*}
J_{12}'\lesssim &   \int_{\Lambda_{12}}\kl<s+\rho\kr>^{\frac{(1-n)p+n-A+2}{2}}\beta(s-\rho)^{-p} \kl({r+\rho-t+s}\kr)^{\frac{A-3}{2}}\d\rho\d s\\
\leq &   \int_{t-r}^{t+r}\kl<\xi\kr>^{\frac{(1-n)p+n-A+2}{2}}\kl({\xi-(t-r)}\kr)^{\frac{A-3}{2}}\d\xi \int_2^{t-r} \beta(\eta)^{-p}\d\eta\\
\lesssim &  
 \kl(\kl<t-r\kr>^{\frac{(1-n)p+n+1}{2}} +\int_{3(t-r)}^{t+r}\kl<\xi\kr>^{\frac{(1-n)p+n-1}{2}}\d\xi\kr)\int_2^{t-r} \beta(\eta)^{-p}\d\eta\\
 \lesssim & \kl<t-r\kr>^{\frac{(1-n)p+n+1}{2}} \int_2^{t-r} \beta(\eta)^{-p}\d\eta,
\end{Eq*}
where we require $p>p_m$ so that $\frac{(1-n)p+n-1}{2}<-1$.
In the region of $\Lambda_{21}$ we have
\begin{Eq}\label{Eq:I_e4}
|I_A(\mu)|\lesssim& (-1-\mu)^{\frac{1-A}{2}}
=\kl(\frac{2r\rho}{(t+r-s+\rho)(t-r-s-\rho)}\kr)^{\frac{A-1}{2}}
\lesssim \kl(\frac{\rho}{t-r-s-\rho}\kr)^{\frac{A-1}{2}}.
\end{Eq}
Then we find
\begin{Eq*}
J_{21}'\lesssim & \int_{\Lambda_{21}} \rho^{\frac{(A-n)p+n+A-2}{2}}\kl<s+\rho\kr>^{\frac{(1-A)p}{2}}\beta(s+\rho)^{-p}\kl({t-r-s-\rho}\kr)^{\frac{1-A}{2}}\d\rho\d s\\
\leq &\kl<t-r\kr>^{\frac{(1-A)p}{2}}\beta(t-r)^{-p}\int_{(t-r)/2}^{t-r}\int_{\xi/3}^{\xi} (\xi-\eta)^{\frac{(A-n)p+n+A-2}{2}}\kl({t-r-\xi}\kr)^{\frac{1-A}{2}}\d \eta\d\xi\\
\lesssim& \kl<t-r\kr>^{\frac{(1-n)p+n+A}{2}}\beta(t-r)^{-p}\int_{(t-r)/2}^{t-r} \kl(t-r-\xi\kr)^{\frac{1-A}{2}}\d\xi\\
\lesssim&  \kl<t-r\kr>^{\frac{(1-n)p+n+3}{2}}\beta(t-r)^{-p},
\end{Eq*}
where $\frac{(A-n)p+n+A-2}{2}>-1$ since $p<p_M$.
In the region of $\Lambda_{22}$, we have \eqref{Eq:I_e4}. 
Then we find
\begin{Eq*}
J_{22}'\lesssim &  \int_{\Lambda_{22}}\kl<s+\rho\kr>^{\frac{(1-n)p+n+A-2}{2}}\beta(s-\rho)^{-p}\kl({t-r-s-\rho}\kr)^{\frac{1-A}{2}}\d\rho\d s\\
\leq &\kl<t-r\kr>^{\frac{(1-n)p+n+A-2}{2}}\int_{(t-r)/2}^{t-r}\kl(t-r-\xi\kr)^{\frac{1-A}{2}}\d\xi \int_2^{(t-r)/3} \beta(\eta)^{-p}\d\eta\\
\lesssim &\kl<t-r\kr>^{\frac{(1-n)p+n+1}{2}} \int_2^{(t-r)/3} \beta(\eta)^{-p}\d\eta.
\end{Eq*}
In the region of $\Lambda_{31}$, we have
\begin{Eq}\label{Eq:I_e5}
|I_A(\mu)|\lesssim& (-1-\mu)^{\frac{1-A}{2}}
=\kl(\frac{2r\rho}{(t+r-s+\rho)(t-r-s-\rho)}\kr)^{\frac{A-1}{2}}
\lesssim \kl(\frac{\rho}{t-r}\kr)^{\frac{A-1}{2}}.
\end{Eq}
Then we find
\begin{Eq*}
J_{31}'\lesssim & \int_{\Lambda_{31}} \rho^{\frac{(A-n)p+n+A-2}{2}}\kl<s+\rho\kr>^{\frac{(1-A)p}{2}}\beta(s+\rho)^{-p}\kl({t-r}\kr)^{\frac{1-A}{2}}\d\rho\d s\\
\lesssim &\kl<t-r\kr>^{\frac{1-A}{2}} \int_4^{(t-r)/2}\int_{\xi/3}^{\xi} (\xi-\eta)^{\frac{(A-n)p+n+A-2}{2}}\kl<\xi\kr>^{\frac{(1-A)p}{2}}\beta(\xi)^{-p}\d \eta\d\xi\\
\lesssim &\kl<t-r\kr>^{\frac{1-A}{2}} \int_4^{(t-r)/2}\kl<\xi\kr>^{\frac{(1-n)p+n+A}{2}}\beta(\xi)^{-p}\d\xi,
\end{Eq*} 
where $\frac{(A-n)p+n+A-2}{2}>-1$ since $p<p_M$.
In the region of $\Lambda_{32}$, we have \eqref{Eq:I_e5}. 
Then we find
\begin{Eq*}
J_{32}'\lesssim&  \int_{\Lambda_{32}}\kl<s+\rho\kr>^{\frac{(1-n)p+n+A-2}{2}}\beta(s-\rho)^{-p}\kl({t-r}\kr)^{\frac{1-A}{2}}\d\rho\d s\\
\lesssim &\kl<t-r\kr>^{\frac{1-A}{2}}\int_6^{(t-r)/2}\int_2^{\xi/3}\kl<\xi\kr>^{\frac{(1-n)p+n+A-2}{2}}\beta(\eta)^{-p} \d \eta\d\xi.
\end{Eq*} 

\part[Estimate for $A\in[2,3)$]
Turning to the proof of \eqref{Eq:Jijk},
we will only present the estimate of $J_{32}$,
and the other terms can be estimated in a similar manner. 
At first,
for $J_{32;1}$ with $p_m<p<p_M$, 
we have
\begin{Eq*}
J_{32;1}\lesssim \kl<t-r\kr>^{\frac{1-A}{2}}\int_6^{(t-r)/2}\int_2^{\xi/3}\kl<\xi\kr>^{\frac{(1-n)p+n+A-2}{2}}\kl<\eta\kr>^{-\frac{A-1}{2}p}\d\eta\d\xi.
\end{Eq*}
When $p>p_d=2/(A-1)$, 
it is easy to see that
\begin{Eq*}
J_{32;1}\lesssim \kl<t-r\kr>^{\frac{1-A}{2}}\int_6^{(t-r)/2}\kl<\xi\kr>^{\frac{(1-n)p+n+A-2}{2}}\d\xi \lesssim N_1(t-r).
\end{Eq*}
Similarly, when $p\le p_d$, 
we have
\begin{Eq*}
J_{32;1}\lesssim
\begin{cases}
\kl<t-r\kr>^{\frac{1-A}{2}}\int_6^{(t-r)/2}\kl<\xi\kr>^{\frac{(1-n)p+n+A-2}{2}}\ln\xi\d\xi, & p=p_d,\\
\kl<t-r\kr>^{\frac{1-A}{2}}\int_6^{(t-r)/2}\kl<\xi\kr>^{\frac{(-n-A+2)p+n+A}{2}}\d\xi,& p<p_d,
\end{cases}
\end{Eq*} which are controlled by $N_1(t-r)$ and
this finishes the proof of \eqref{Eq:v_e} with $k=1$. 
For $J_{32;2}$ with $p_m<p<p_t$, we have
\begin{Eq*}
J_{32;2}\lesssim& \kl<t-r\kr>^{\frac{1-A}{2}}\int_6^{(t-r)/2}\kl<\xi\kr>^{\frac{(1-n)p+n+A-2}{2}}\d\xi\int_2^{(t-r)/6}\kl<\eta\kr>^{\frac{(1-n)p^2+(n+1)p}{2}} \d \eta\\
\lesssim&\kl<t-r\kr>^{\frac{(1-n)p+n+1}{2}}\int_2^{(t-r)/6}\kl<\eta\kr>^{\frac{(1-n)p^2+(n+1)p}{2}} \d \eta\lesssim N_2(t-r).
\end{Eq*}
Finally, for $J_{32;3}$ with $p=p_t>p_d$, we have
\begin{Eq*}
J_{32;3}\lesssim& \kl<t-r\kr>^{\frac{1-A}{2}}\int_6^{(t-r)/2}\int_2^{\xi/3}\kl<\xi\kr>^{-1}\kl<\eta\kr>^{\frac{(1-A)p}{2}} (\ln\eta)^{p}\d \eta\d\xi\\
\lesssim& \kl<t-r\kr>^{\frac{1-A}{2}}\int_6^{(t-r)/2}\kl<\xi\kr>^{-1} \d\xi\lesssim \kl<t-r\kr>^{\frac{1-A}{2}}\ln\kl<t-r\kr>= N_3(t-r).
\end{Eq*}
In conclusion, this completes the proof for $A\in[2,3)$. 

\part[Estimate for $A=3$]
The case $A=3$ is much simpler, thanks to \eqref{Eq:I_p_5},
we only need to consider $\Lambda_{11}$ and $\Lambda_{12}$. 
By \eqref{Eq:I_p_6} and a similar approach as above we get the desired estimate.
\end{proof}

\subsection{Long-time existence}
In this subsection, 
we will construct a Cauchy sequence to approximate the desired solution. 
We set $u_{-1}=0$ and let $u_{j+1}$ be the solution of the equation
\begin{Eq}\label{Eq:uj_o}
\begin{cases}
\partial_t^2 u_{j+1} -\Delta_Au_{j+1}=r^{\frac{(A-n)p+n-A}{2}}|u_{j}|^p,\quad r\in \SR_+,\\
u_{j+1}(0,x)=\varepsilon r^{\frac{n-A}{2}}U_0(r),\quad \partial_t u_{j+1}(0,x)=\varepsilon r^{\frac{n-A}{2}}U_1(r).
\end{cases}
\end{Eq}
By \Le{Le:u0_e} and \Le{Le:v_e}, 
noticing that for any $p>1$, we have
\begin{Eq*}
\kl||a|^p-|b|^p\kr|\lesssim |a-b|\max(|a|,|b|)^{p-1},
\end{Eq*}
then we see
\begin{Eq*}
\kl<t+r\kr>^\frac{A-1}{2}|u_{j+1}|\leq& \varepsilon C_0\kl<t-r\kr>^\frac{1-A}{2}\Psi+C_0N_k(t-r)\|\omega_k u_j\|_{L_{s,\rho}^\infty(\Lambda)}^p,\\
\kl<t+r\kr>^\frac{A-1}{2}|u_{j+1}-u_{j}|\leq& C_0N_k(t-r)\|\omega_k (u_j-u_{j-1})\|_{L_{s,\rho}^\infty(\Lambda)}\max_{l\in\{j,j-1\}}\|\omega_k u_l\|_{L_{s,\rho}^\infty(\Lambda)}^{p-1},
\end{Eq*}
with $k=1,2,3$ and $C_0$ large enough.
Here 
\begin{Eq*}
\Psi:=&\|r^\frac{n-A+2}{2}U_0'(r)\|_{L_r^\infty}+\|r^\frac{n-A}{2}U_0(r)\|_{L_r^\infty}+\|r^\frac{n-A+2}{2}U_1(r)\|_{L_r^\infty},\\
\Lambda(t,r):=&\kl\{(s,\rho)\in\Omega: s+\rho<t+r,s-\rho<t-r\kr\},\\
\Omega:=&\kl\{(t,r)\in \SR_+^2:t>r-1\kr\}.
\end{Eq*}

To prove \Th{Th:M_1}, we need to separate $p\in(p_m,p_M)$ into more parts rather than that in \eqref{Eq:Main_7}, \eqref{Eq:Main_8} or \eqref{Eq:Main_9}. For the reader's convenience, we list them as below. When $(3-A)(A+n+2)<8$, we have $p_d<p_F<p_S<p_t$. Then the proof for $p<p_d$ will be found in \Pt{Pt:4}, $p=p_d$ in \Pt{Pt:6}, $p_d<p<p_S$  in \Pt{Pt:8}, $p=p_S$ in \Pt{Pt:9}, $p_S<p<p_t$ in \Pt{Pt:3}, $p=p_t$ in \Pt{Pt:2} and $p>p_t$ in \Pt{Pt:1}. When $(3-A)(A+n+2)=8$, we have $p_d=p_F=p_S=p_t$. The proof for $p<p_d$ will be found in \Pt{Pt:4}, $p=p_d$ in \Pt{Pt:7} and $p>p_d$ in \Pt{Pt:1}. Finally when $(3-A)(A+n+2)>8$, we have $p_d>p_F>p_S>p_t$. The proof for $p<p_F$ will be found in \Pt{Pt:4}, $p=p_F$ in \Pt{Pt:5} and $p>p_F$ in \Pt{Pt:1}.

Now, we are prepared to give the proofs for each part.

\setcounter{part0}{0}
\part[$\max(p_t,p_F)<p$]\label{Pt:1}
In this part, we choose $k=1$. For $(t,r)\in\Omega$ we find
\begin{Eq*}
\omega_1|u_{j+1}|\leq& \varepsilon C_0\Psi+C_0\|\omega_1 u_j\|_{L_{s,\rho}^\infty(\Lambda)}^p,\\
\omega_1|u_{j+1}-u_j|\leq& C_0\|\omega_1 \kl(u_j-u_{j-1}\kr)\|_{L_{s,\rho}^\infty(\Lambda)}\max_{l\in\{j,j-1\}}\|\omega_1 u_l\|_{L_{s,\rho}^\infty(\Lambda)}^{p-1}.
\end{Eq*}
Taking the $L_{t,r}^\infty(\Omega)$ norm on both sides and we get
\begin{Eq*}
\kl\|\omega_1u_{j+1}\kr\|_{L_{t,r}^\infty(\Omega)}\leq& \varepsilon C_0\Psi+C_0\|\omega_1 u_j\|_{L_{t,r}^\infty(\Omega)}^p,\\
\kl\|\omega_1(u_{j+1}-u_j)\kr\|_{L_{t,r}^\infty(\Omega)}\leq& C_0\|\omega_1 \kl(u_j-u_{j-1}\kr)\|_{L_{t,r}^\infty(\Omega)}\max_{l\in\{j,j-1\}}\|\omega_1 u_l\|_{L_{t,r}^\infty(\Omega)}^{p-1}.
\end{Eq*}
For any $\varepsilon>0$ satisfying $(2\varepsilon C_0\Psi)^p<\varepsilon \Psi$, we find
\begin{Eq*}
\kl\|\omega_1u_{j}\kr\|_{L_{t,r}^\infty(\Omega)}\leq 2\varepsilon C_0\Psi
\end{Eq*}
holds for any $j$ since $u_{-1}=0$. Meanwhile, it also gives us
\begin{Eq*}
\kl\|\omega_1(u_{j+1}-u_j)\kr\|_{L_{t,r}^\infty(\Omega)}\leq& C_0\kl(2\varepsilon C_0\Psi\kr)^{p-1}\|\omega_1 \kl(u_j-u_{j-1}\kr)\|_{L_{t,r}^\infty(\Omega)}\\
\leq &\frac{1}{2}\|\omega_1 \kl(u_j-u_{j-1}\kr)\|_{L_{t,r}^\infty(\Omega)}.
\end{Eq*}
This means $\{u_j\}$ is a Cauchy sequence in weighted $L^\infty$ norm. 
Set the limit as $u$. It is easy to check $u$ and $F=r^{\frac{(A-n)p+n-A}{2}}|u|^p$ satisfy \eqref{Eq:u_i_r} while $p<p_F$. Thus we get the desired global weak solution.

\part[$p_S<p_t=p$]\label{Pt:2}
In this part we take $k=3$. For $(t,r)\in\Omega$ we find
\begin{Eq*}
\omega_3|u_{j+1}|\leq& \varepsilon C_0(\ln\kl<t-r\kr>)^{-1}\Psi+C_0\|\omega_3 u_j\|_{L_{s,\rho}^\infty(\Lambda)}^p,\\
\omega_3|u_{j+1}-u_j|\leq& C_0\|\omega_3 \kl(u_j-u_{j-1}\kr)\|_{L_{s,\rho}^\infty(\Lambda)}\max_{l\in\{j,j-1\}}\|\omega_3 u_l\|_{L_{s,\rho}^\infty(\Lambda)}^{p-1}.
\end{Eq*}
Noticing $\kl<t-r\kr>\geq 2$, by a similar process as above we get the desired global solution.

\part[$p_S<p<p_t$]\label{Pt:3}
In this part we take $k=2$. For $(t,r)\in\Omega$ we find
\begin{Eq*}
\omega_2|u_{j+1}|\leq& \varepsilon C_0\kl<t-r\kr>^{\frac{(n-1)p-n-A}{2}}\Psi+C_0\|\omega_2 u_j\|_{L_{s,\rho}^\infty(\Lambda)}^p,\\
\omega_2|u_{j+1}-u_j|\leq& C_0\|\omega_2 \kl(u_j-u_{j-1}\kr)\|_{L_{s,\rho}^\infty(\Lambda)}\max_{l\in\{j,j-1\}}\|\omega_2 u_l\|_{L_{s,\rho}^\infty(\Lambda)}^{p-1}.
\end{Eq*}
Here $\frac{(n-1)p-n-A}{2}\leq 0$ since $p<p_t$, by a similar process again we get the desired global solution.

\part[$p<\min(p_d,p_F)$]\label{Pt:4}
In this case, we choose $T_*=T_*(\varepsilon)$ which satisfies $\varepsilon^{p-1}T_*^{\frac{(-n-A+2)p+n+A+2}{2}}=a$ with $a$ to be fixed later. Here we define $\Omega_*:=\Omega\cap \{t:t<T_*\}$ and choose $k=1$. Then for $(t,r)\in\Omega_*$  we find

\begin{Eq*}
\omega_1|u_{j+1}|\leq& \varepsilon C_0\Psi+C_0\kl<t-r\kr>^{\frac{(-n-A+2)p+n+A+2}{2}}\|\omega_1 u_j\|_{L_{s,\rho}^\infty(\Lambda)}^p,\\
\omega_1|u_{j+1}-u_j|\leq& C_0\kl<t-r\kr>^{\frac{(-n-A+2)p+n+A+2}{2}}\|\omega_1 \kl(u_j-u_{j-1}\kr)\|_{L_{s,\rho}^\infty(\Lambda)}\\
&\times\max_{l\in\{j,j-1\}}\|\omega_1 u_l\|_{L_{s,\rho}^\infty(\Lambda)}^{p-1}.
\end{Eq*}
Taking the $L_{t,r}^\infty(\Omega_*)$ norm on both sides, 
when $T_*\geq 3$ we get
\begin{Eq*}
\kl\|\omega_1u_{j+1}\kr\|_{L_{t,r}^\infty(\Omega_*)}\leq& \varepsilon C_1\Psi+C_1T_*^{\frac{(-n-A+2)p+n+A+2}{2}}\|\omega_1 u_j\|_{L_{t,r}^\infty(\Omega_*)}^p,\\
\kl\|\omega_1(u_{j+1}-u_j)\kr\|_{L_{t,r}^\infty(\Omega_*)}\leq& C_1T_*^{\frac{(-n-A+2)p+n+A+2}{2}}\|\omega_1 \kl(u_j-u_{j-1}\kr)\|_{L_{t,r}^\infty(\Omega_*)}\\
&\times\max_{l\in\{j,j-1\}}\|\omega_1 u_l\|_{L_{t,r}^\infty(\Omega_*)}^{p-1},
\end{Eq*}
with some $C_1$ large enough. 
Considering $\varepsilon$ such that $T_*(\varepsilon)\geq 3$ 
and defining $a=(2C_1)^{-p}\Psi^{1-p}$,
we conclude that
\begin{Eq*}
\kl\|\omega_1u_{j}\kr\|_{L_{t,r}^\infty(\Omega_*)}\leq& 2\varepsilon C_1\Psi,\\
\kl\|\omega_1(u_{j+1}-u_j)\kr\|_{L_{t,r}^\infty(\Omega_*)}\leq& \frac{1}{2}\|\omega_1 \kl(u_j-u_{j-1}\kr)\|_{L_{t,r}^\infty(\Omega_*)}
\end{Eq*} 
 for any $j$, which are sufficient to  get the desired solution.

\part[$p=p_F<p_d$]\label{Pt:5}
In this case, 
we choose $T_*$ which satisfies $\varepsilon^{p-1}\ln T_*=a$ 
with $a$ to be fixed later, 
$\Omega_*$ as above and $k=1$.
For $(t,r)\in \Omega_*$ we find
\begin{Eq*}
\omega_1|u_{j+1}|\leq& \varepsilon C_0\Psi+C_0\ln\kl<t-r\kr>\|\omega_1 u_j\|_{L_{s,\rho}^\infty(\Lambda)}^p,\\
\omega_1|u_{j+1}-u_j|\leq& C_0\ln\kl<t-r\kr>\|\omega_1 \kl(u_j-u_{j-1}\kr)\|_{L_{s,\rho}^\infty(\Lambda)}\max_{l\in\{j,j-1\}}\|\omega_1 u_l\|_{L_{s,\rho}^\infty(\Lambda)}^{p-1}.
\end{Eq*}
Similar as above, 
taking $\varepsilon$ such that $T_*(\varepsilon)\geq 3$, 
defining $a:=(2C_1)^{-p}\Psi^{1-p}$ with $C_1$ large enough, 
we get the Cauchy sequence $\{u_j\}$ and the desired solution.

\part[$p=p_d<p_F$]\label{Pt:6}
In this case, 
we choose $T_*$ which satisfies
\begin{Eq}\label{Eq:T*_pd}
\varepsilon^{p-1} T_*^\frac{(-n-A+2)p+n+A+2}{2}\ln T_*=a
\end{Eq}
and $\Omega_*$ same as above. 
Taking $k=1$, 
for $(t,r)\in\Omega_*$  we find
\begin{Eq*}
\omega_1|u_{j+1}|\leq& \varepsilon C_0\Psi+C_0\kl<t-r\kr>^{\frac{(-n-A+2)p+n+A+2}{2}}\ln\kl<t-r\kr>\|\omega_1 u_j\|_{L_{s,\rho}^\infty(\Lambda)}^p,\\
\omega_1|u_{j+1}-u_j|\leq& C_0\kl<t-r\kr>^{\frac{(-n-A+2)p+n+A+2}{2}}\ln\kl<t-r\kr>\|\omega_1 \kl(u_j-u_{j-1}\kr)\|_{L_{s,\rho}^\infty(\Lambda)}\\
&\times\max_{l\in\{j,j-1\}}\|\omega_1 u_l\|_{L_{s,\rho}^\infty(\Lambda)}^{p-1}.
\end{Eq*}
Choosing $\varepsilon$ such that $T_*(\varepsilon)\geq3$, 
$a=(2C_1)^{-p}\Psi^{1-p}$ with $C_1$ large enough, 
we get the Cauchy sequence $\{u_j\}$ and the desired solution.
To finish the proof of \Th{Th:M_1} for this part, 
we introduce the following claim and postpone its proof to the end of this section.
\begin{claim}\label{Cl:T*_pd}
Assume that $T_*$ satisfies $\varepsilon^{p-1} T_*^\frac{-h_F(p)}{2}\ln T_*=a$ with some constant $a$, then there exists two constant $c_1$, $c_2$ such that $c_1\varepsilon^{\frac{2(p-1)}{h_F(p)}}|\ln \varepsilon |^{\frac{2}{h_F(p)}}\leq T_*\leq c_2\varepsilon^{\frac{2(p-1)}{h_F(p)}}|\ln \varepsilon |^{\frac{2}{h_F(p)}}$ for $\varepsilon$ small enough.
\end{claim}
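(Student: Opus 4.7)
My plan is to reduce the claim to a routine asymptotic analysis of the implicit equation. Since we are in the regime $p<p_F$, we have $h_F(p)<0$, so $\alpha:=-h_F(p)/2>0$ and the defining equation reads
\begin{Eq*}
\varepsilon^{p-1} T_*^{\alpha} \ln T_* = a.
\end{Eq*}
I would first establish existence and uniqueness of a large root: the function $\phi(T):=T^\alpha \ln T$ has derivative $T^{\alpha-1}(\alpha\ln T + 1)$, so $\phi$ is strictly increasing on $[3,\infty)$ with $\phi(3)>0$ and $\phi(T)\to\infty$. For $\varepsilon$ small enough, $a\varepsilon^{1-p}$ exceeds $\phi(3)$, so there is a unique $T_*>3$ satisfying the equation.

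Next, I would verify the two-sided bounds by direct substitution of the candidate $T_c:=c\,\varepsilon^{2(p-1)/h_F(p)}|\ln\varepsilon|^{2/h_F(p)}=c\,\varepsilon^{-(p-1)/\alpha}|\ln\varepsilon|^{-1/\alpha}$. Taking logarithms,
\begin{Eq*}
\ln T_c = \frac{(p-1)}{\alpha}|\ln\varepsilon| - \frac{1}{\alpha}\ln|\ln\varepsilon| + \ln c.
\end{Eq*}
Hence
\begin{Eq*}
\varepsilon^{p-1}T_c^{\alpha}\ln T_c
= c^\alpha |\ln\varepsilon|^{-1}\!\left(\frac{(p-1)}{\alpha}|\ln\varepsilon| + O(\ln|\ln\varepsilon|)\right)
= \frac{(p-1)c^{\alpha}}{\alpha}\bigl(1+o(1)\bigr)
\end{Eq*}
as $\varepsilon\to 0^+$. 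Choosing $c_2$ large enough so that $(p-1)c_2^\alpha/\alpha > 2a$ and $c_1$ small enough so that $(p-1)c_1^\alpha/\alpha < a/2$ then yields $\phi(T_{c_1})\cdot\varepsilon^{p-1}<a<\phi(T_{c_2})\cdot\varepsilon^{p-1}$ for all sufficiently small $\varepsilon$. The monotonicity from the first step sandwiches $T_*\in[T_{c_1},T_{c_2}]$, which is exactly the claim.

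The only mildly delicate point, and what I would regard as the main (and really only) obstacle, is the asymptotic expansion of $\ln T_c$: one needs to confirm that the correction $-\alpha^{-1}\ln|\ln\varepsilon|+\ln c$ is genuinely negligible against the leading $\alpha^{-1}(p-1)|\ln\varepsilon|$ after multiplication by $|\ln\varepsilon|^{-1}$; this reduces to $\ln|\ln\varepsilon|/|\ln\varepsilon|\to 0$, which is immediate. Everything else is bookkeeping, and the structure of the argument is insensitive to the exact value of the constant $a$ so long as $p<p_F$ guarantees $\alpha>0$.
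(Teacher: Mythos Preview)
Your proof is correct and follows essentially the same approach as the paper: both reduce the claim to an elementary asymptotic analysis of the implicit relation $\varepsilon^{p-1}T_*^{\alpha}\ln T_*=a$ with $\alpha=-h_F(p)/2>0$. The only organizational difference is that the paper first derives the crude bound $\varepsilon^{2(p-1)/h_F(p)+\delta}\lesssim T_*\lesssim \varepsilon^{2(p-1)/h_F(p)}$ and then refines via the substitution $S:=\varepsilon^{-2(p-1)/h_F(p)}T_*$, whereas you bypass that step by directly substituting the candidate $T_c$ and invoking monotonicity of $T\mapsto T^{\alpha}\ln T$ to sandwich $T_*$; the underlying computation---that $\ln T_*\sim \tfrac{p-1}{\alpha}|\ln\varepsilon|$---is identical in both.
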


\part[$p=p_d=p_F$]\label{Pt:7}
In this case, 
we choose $T_*$ which satisfies $\varepsilon^{p-1} (\ln T_*)^2=a$ 
and $\Omega_*$ same as above. 
Taking $k=1$, 
for $(t,r)\in\Omega_*$  we find
\begin{Eq*}
\omega_1|u_{j+1}|\leq& \varepsilon C_0\Psi+C_0(\ln \kl<t-r\kr>)^2\|\omega_1 u_j\|_{L_{s,\rho}^\infty(\Lambda)}^p,\\
\omega_1|u_{j+1}-u_j|\leq& C_0(\ln \kl<t-r\kr>)^2\|\omega_1 \kl(u_j-u_{j-1}\kr)\|_{L_{s,\rho}^\infty(\Lambda)}\max_{l\in\{j,j-1\}}\|\omega_1 u_l\|_{L_{s,\rho}^\infty(\Lambda)}^{p-1}.
\end{Eq*}
Choosing $\varepsilon$ such that $T_*(\varepsilon)\geq 3$, 
$a=(2C_1)^{-p}\Psi^{1-p}$ and $C_1$ large enough, 
we get the Cauchy sequence $\{u_j\}$ and finish the proof.

\part[$p_d<p<p_S$]\label{Pt:8}
In this case, 
we choose $T_*$ which satisfies $\varepsilon^{p(p-1)}T_*^{\frac{(1-n)p^2+(n+1)p+2}{2}}=a$ 
with $a$ to be fixed later  and $\Omega_*$ as above. 
Moreover, 
we separate the region $\Omega_*$ to
\begin{Eq*}
\Omega_{*1}:=&\Omega_*\cap\{(t,r):\kl<t-r\kr>\leq (b \varepsilon)^{\frac{2(p-1)}{(n-1)p-n-A}}\},\\
 \Omega_{*2}:=&\Omega_*\cap\{(t,r):\kl<t-r\kr>\geq (b \varepsilon)^{\frac{2(p-1)}{(n-1)p-n-A}}\},
\end{Eq*} 
with $b$ to be fixed later. Firstly we take $k=1$, 
for $(t,r)\in\Omega_{*1}$  we find
\begin{Eq*}
\omega_1|u_{j+1}|\leq& \varepsilon C_0\Psi+C_0\kl<t-r\kr>^{\frac{(1-n)p+n+A}{2}}\|\omega_1 u_j\|_{L_{s,\rho}^\infty(\Lambda)}^p,\\
\omega_1|u_{j+1}-u_j|\leq& C_0\kl<t-r\kr>^{\frac{(1-n)p+n+A}{2}}\|\omega_1 \kl(u_j-u_{j-1}\kr)\|_{L_{s,\rho}^\infty(\Lambda)}\max_{l\in\{j,j-1\}}\|\omega_1 u_l\|_{L_{s,\rho}^\infty(\Lambda)}^{p-1}.
\end{Eq*}
Taking the $L_{t,r}^\infty(\Omega_{*1})$ norm on both sides we get
\begin{Eq*}
\kl\|\omega_1u_{j+1}\kr\|_{L_{t,r}^\infty(\Omega_{*1})}\leq& \varepsilon C_0\Psi+C_0(b\varepsilon)^{1-p}\|\omega_1 u_j\|_{L_{t,r}^\infty(\Omega_{*1})}^p,\\
\kl\|\omega_1(u_{j+1}-u_j)\kr\|_{L_{t,r}^\infty(\Omega_{*1})}\leq& C_0(b\varepsilon)^{1-p}\|\omega_1 \kl(u_j-u_{j-1}\kr)\|_{L_{t,r}^\infty(\Omega_{*1})}\max_{l\in\{j,j-1\}}\|\omega_1 u_l\|_{L_{t,r}^\infty(\Omega_{*1})}^{p-1}.
\end{Eq*}
Choosing $b$ such that $(2C_0)^pb^{1-p}\Psi^{p-1}=1$, we find
\begin{Eq}\label{Eq:u_i_1}
\kl\|\omega_1u_{j}\kr\|_{L_{t,r}^\infty(\Omega_{*1})}\leq& 2\varepsilon C_0\Psi,\\
\kl\|\omega_1(u_{j+1}-u_j)\kr\|_{L_{t,r}^\infty(\Omega_{*1})}\leq& \frac{1}{2}\|\omega_1 \kl(u_j-u_{j-1}\kr)\|_{L_{t,r}^\infty(\Omega_{*1})}
\end{Eq}
holds for any $j$.
This means $\{u_j\}$ is a Cauchy sequence in weighted $L^\infty$ norm on $\Omega_{*1}$ region.
On the other hand, for $(t,r)\in \Omega_{*2}$, we separate $\Lambda$ into $\Lambda_1:=\Lambda\cap \Omega_{*1}$ and $\Lambda_2:=\Lambda\cap \Omega_{*2}$.
Because of the linearity of function, we can separate $u_{j+1}(t,r)$ to two terms which determined by the nonlinear terms supported in $\Lambda_1$ and $\Lambda_2$ respectively.
Taking $k=1$ in the former region and $k=2$ in the latter region, for $(t,r)\in \Omega_{*2}$ we find 
\begin{Eq*}
\omega_2|u_{j+1}|\leq& \varepsilon C_0\kl<t-r\kr>^\frac{(n-1)p-n-A}{2}\Psi+C_0\|\omega_1 u_j\|_{L_{s,\rho}^\infty(\Lambda_1)}^p\\
&+C_0\kl<t-r\kr>^{\frac{(1-n)p^2+(n+1)p+2}{2}}\|\omega_2 u_j\|_{L_{s,\rho}^\infty(\Lambda_2)}^p\\
\omega_2|u_{j+1}-u_j|\leq& C_0\|\omega_1 \kl(u_j-u_{j-1}\kr)\|_{L_{s,\rho}^\infty(\Lambda_1)} \max_{l\in\{j,j-1\}}\|\omega_1 u_l\|_{L_{s,\rho}^\infty(\Lambda_1)}^{p-1},\\
+C_0\kl<t-r\kr>&{}^{\frac{(1-n)p^2+(n+1)p+2}{2}}\|\omega_2 (u_j-u_{j-1})\|_{L_{s,\rho}^\infty(\Lambda_2)}\max_{l\in\{j,j-1\}}\|\omega_2 u_l\|_{L_{s,\rho}^\infty(\Lambda_2)}^{p-1}.
\end{Eq*}
Taking the $L_{t,r}^\infty(\Omega_{*2})$ norm on both sides, 
noticing $\kl<t-r\kr>\geq (b \varepsilon)^{\frac{2(p-1)}{(n-1)p-n-A}}$ for $(t,r)\in \Omega_{*2}$
and $(2C_0)^pb^{1-p}\Psi^{p-1}=1$,
and using the estimate in $\Omega_{*1}$,
when $T_*\geq 3$ we get
\begin{Eq*}
\kl\|\omega_2u_{j+1}\kr\|_{L_{t,r}^\infty(\Omega_{*2})}\leq&C_1(\varepsilon \Psi)^p+C_1T_*^{\frac{(1-n)p^2+(n+1)p+2}{2}}\|\omega_2 u_j\|_{L_{t,r}^\infty(\Omega_{*2})}^p,\\
\kl\|\omega_2(u_{j+1}-u_j)\kr\|_{L_{t,r}^\infty(\Omega_{*2})}\leq& C_1(\varepsilon \Psi)^{p-1}\|\omega_1 \kl(u_j-u_{j-1}\kr)\|_{L_{t,r}^\infty(\Omega_{*1})}\\
+C_1T_*^{\frac{(1-n)p^2+(n+1)p+2}{2}}&\|\omega_2 (u_j-u_{j-1})\|_{L_{t,r}^\infty(\Omega_{*2})}\max_{l\in\{j,j-1\}}\|\omega_2 u_l\|_{L_{t,r}^\infty(\Omega_{*2})}^{p-1},
\end{Eq*}
with some $C_1$ large enough. 
Choosing $\varepsilon$ such that $T_*(\varepsilon)\geq 3$ and $a=(2C_1)^{-p}\Psi^{p(1-p)}$, we find
\begin{Eq*}
\kl\|\omega_2u_{j+1}\kr\|_{L_{t,r}^\infty(\Omega_{*2})}\leq&2C_1(\varepsilon \Psi)^p,\\
\kl\|\omega_2(u_{j+1}-u_j)\kr\|_{L_{t,r}^\infty(\Omega_{*2})}\leq& C_1(\varepsilon \Psi)^{p-1}\|\omega_1 \kl(u_j-u_{j-1}\kr)\|_{L_{t,r}^\infty(\Omega_{*1})}\\
&+\frac{1}{2}\|\omega_2 (u_j-u_{j-1})\|_{L_{t,r}^\infty(\Omega_{*2})},
\end{Eq*}
holds for any $j$. 
Now, since we already know $\{u_j\}$ is a Cauchy sequence on $\Omega_{*1}$ region, 
we can also know $\{u_j\}$ is a Cauchy sequence in weighted $L^\infty$ norm on $\Omega_{*2}$ region.
In summary, we get the desired solution.

\part[$p_d<p=p_S$]\label{Pt:9}
In this part, 
we choose $T_*$ which satisfies $\varepsilon^{p(p-1)}\ln T_*=a$ 
and $\Omega_{*1}$, $\Omega_{*2}$ as above with $a,b$ to be fixed latter. 
Similarly, 
for $(t,r)\in \Omega_{*1}$ we find 
\begin{Eq*}
\omega_1|u_{j+1}|\leq& \varepsilon C_0\Psi+C_0\kl<t-r\kr>^{\frac{(1-n)p+n+A}{2}}\|\omega_1 u_j\|_{L_{s,\rho}^\infty(\Lambda)}^p,\\
\omega_1|u_{j+1}-u_j|\leq& C_0\kl<t-r\kr>^{\frac{(1-n)p+n+A}{2}}\|\omega_1 \kl(u_j-u_{j-1}\kr)\|_{L_{s,\rho}^\infty(\Lambda)}\max_{l\in\{j,j-1\}}\|\omega_1 u_l\|_{L_{s,\rho}^\infty(\Lambda)}^{p-1}.
\end{Eq*}
For $(t,r)\in \Omega_{*2}$ we find 
\begin{Eq*}
\omega_2|u_{j+1}|\leq& \varepsilon C_0\kl<t-r\kr>^\frac{(n-1)p-n-A}{2}\Psi+C_0\|\omega_1 u_j\|_{L_{s,\rho}^\infty(\Lambda_1)}^p\\
&+C_0\ln\kl<t-r\kr>\|\omega_2 u_j\|_{L_{s,\rho}^\infty(\Lambda_2)}^p\\
\omega_2|u_{j+1}-u_j|\leq& C_0\|\omega_1 \kl(u_j-u_{j-1}\kr)\|_{L_{s,\rho}^\infty(\Lambda_1)} \max_{l\in\{j,j-1\}}\|\omega_1 u_l\|_{L_{s,\rho}^\infty(\Lambda_1)}^{p-1}\\
&+C_0\ln\kl<t-r\kr>\|\omega_2 (u_j-u_{j-1})\|_{L_{s,\rho}^\infty(\Lambda_2)}\max_{l\in\{j,j-1\}}\|\omega_2 u_l\|_{L_{s,\rho}^\infty(\Lambda_2)}^{p-1}.
\end{Eq*}
Taking $b$ satisfying $(2C_0)^pb^{1-p}\Psi^{p-1}=1$, 
choosing $\varepsilon$ such that $T_*(\varepsilon)\geq 3$, 
$a=(2C_1)^{-p}\Psi^{p(1-p)}$ and $C_1$ large enough, we get the Cauchy sequence $\{u_j\}$ and the desired solution.

Before the end of this section, we show the proof of \Cl{Cl:T*_pd}. 
Following \eqref{Eq:T*_pd}, we can easily find
\begin{Eq*}
\varepsilon^{\frac{2(p-1)}{h_F(p)}+\delta}\lesssim T_*\lesssim \varepsilon^{\frac{2(p-1)}{h_F(p)}}
\end{Eq*}
for any $0<\delta\ll1$ and $\varepsilon$ small enough. This suggests us to define
\begin{Eq*}
S(\varepsilon):=\varepsilon^{-\frac{2(p-1)}{h_F(p)}}T_*,\qquad \varepsilon^{\delta}\lesssim S\lesssim 1.
\end{Eq*}
Then, \eqref{Eq:T*_pd} goes to
\begin{Eq*}
a=S^\frac{-h_F(p)}{2}\ln\kl(\varepsilon^{\frac{2(p-1)}{h_F(p)}}S\kr)\approx S^\frac{-h_F(p)}{2}|\ln\varepsilon|,
\end{Eq*}
and then
\begin{Eq*}
S\approx |\ln \varepsilon|^{\frac{2}{h_F(p)}},\qquad T_*\approx \varepsilon^{\frac{2(p-1)}{h_F(p)}}|\ln \varepsilon|^{\frac{2}{h_F(p)}},
\end{Eq*}
which finishes the proof.

\section{Long-time existence for $A\in[3,\infty)$}\label{Se:4}
In this section, we will consider the case $A\in[3,\infty)$, and show the proof of \Th{Th:M_2}. Again, we only need to consider the equation \eqref{Eq:u_o}.

\subsection{Estimate for linear solution}
In this subsection, we will construct some prior estimates of the solution to the linear equation \eqref{Eq:u_l}. Firstly we give the following argument.
\begin{lemma}\label{Le:ul_e1}
Let $u$ be the solution of \eqref{Eq:u_l}. We have
\begin{Eq}\label{Eq:ul_e11}
\kl\|r^{\frac{A-1}{2}}u\kr\|_{L_t^\infty L_r^q}\lesssim& \|r^{\frac{A+1}{2}}g\|_{L_r^q}+ \|r^{\frac{A-1}{2}}f\|_{L_r^q}+\|r^{\frac{A+1}{2}}F\|_{L_t^qL_r^1},
\end{Eq}
for any $1<q<\infty$, and 
\begin{Eq}\label{Eq:ul_e12}
\kl\|r^{\frac{A-1}{2}-\alpha}u\kr\|_{L_t^\sigma L_r^p}\lesssim& \|r^{\frac{A+1}{2}}g\|_{L_r^q}+ \|r^{\frac{A-1}{2}}f\|_{L_r^q}+\|r^{\frac{A+1}{2}}F\|_{L_t^qL_r^1},
\end{Eq}
provided that
\begin{Eq}\label{Eq:sigpq_r}
1<\frac{q}{p}<\frac{\sigma}{p}<\infty,\qquad~\alpha p=1-\frac{p}{q}+\frac{p}{\sigma}.
\end{Eq}
\end{lemma}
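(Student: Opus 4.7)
The plan is to establish \eqref{Eq:ul_e11} from the explicit integral representation in \Le{Le:u_e}, then derive \eqref{Eq:ul_e12} from \eqref{Eq:ul_e11} via a scaling and interpolation argument. By linearity and Duhamel's principle, it suffices to treat $u_f$, $u_g$ and $u_F$ separately. The $f$--piece can be handled analogously to the $g$--piece, using the companion representation $u_f=\partial_t u_{g=f}$ from \Le{Le:u_e}, where the lower weight $r^{(A-1)/2}$ on $f$ reflects the one-derivative-higher structure of the homogeneous data. The $u_F$--piece will follow from the $u_g$--piece by a Minkowski step in the source time $s$, the cone-localization of $u_g$ being responsible for the mixed norm $L_t^qL_r^1$ on the right-hand side.

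For the main $g$--estimate in \eqref{Eq:ul_e11}, I would write
\begin{Eq*}
r^{\frac{A-1}{2}}u_g(t,r)=\int_0^{t+r}\left(\rho^{\frac{A+1}{2}}g(\rho)\right)\rho^{-1}I_A(\mu)\,\d\rho,\qquad \mu=\frac{r^2+\rho^2-t^2}{2r\rho},
\end{Eq*}
and reduce the $L_r^q$ bound to kernel estimates for $K(t,r,\rho):=\rho^{-1}I_A(\mu)$. The key observation is that $I_A$ is supported in $\mu\le 1$ by \Le{Le:I_p}, so for each fixed $\rho$ the $r$--integration is effectively localized to the cone region $r\in[\,|\rho-t|,\rho+t\,]$; moreover, the singularities of $I_A$ at $\mu=\pm1$ (i.e.\ at $r=|\rho\pm t|$) are integrable in $r$ once one accounts for the Jacobian of the change of variables $r\mapsto\mu$, which is $(r^2-\rho^2+t^2)/(2r^2\rho)$. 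Combining the pointwise bounds \eqref{Eq:I_p_2}--\eqref{Eq:I_p_6} with Hölder's inequality in $\rho$ then yields the desired weighted $L_r^q$ bound uniformly in $t$, valid for every $q\in(1,\infty)$.

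For \eqref{Eq:ul_e12}, I would use a scaling and real-interpolation argument. The identity $\alpha p=1-p/q+p/\sigma$ in \eqref{Eq:sigpq_r} is precisely the scaling relation forced by the invariance $(t,r)\mapsto(\lambda t,\lambda r)$ of \eqref{Eq:u_l}, which is the standard signal that \eqref{Eq:ul_e12} should follow from \eqref{Eq:ul_e11} via a dyadic decomposition in $r$ (and $t$ as needed). On each dyadic piece $r\approx 2^j$ one applies \eqref{Eq:ul_e11} at an adjusted index, the weight $r^{-\alpha}$ contributing a geometric factor $2^{-j\alpha p}$; the strict inequalities $1<q/p<\sigma/p<\infty$ are exactly what is needed to provide Hölder summability across scales when reassembling the dyadic contributions.

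The main obstacle will be controlling the kernel $I_A(\mu)$ uniformly for all $A\in[3,\infty)$, and in particular for non-integer $A$, where the singularity structure at $\mu=\pm 1$ is genuinely distributional and more delicate than in the odd-integer case. A secondary subtlety is the mixed-norm structure on the $F$--side: the bound $\|r^{(A+1)/2}F\|_{L_t^qL_r^1}$ pairs $L^q$ in time with $L^1$ in space, so the reduction from $u_F$ to $u_g$ must exchange the orders of integration in a way compatible with the light cone geometry, rather than by a naive Minkowski step in $s$. I expect these points to dictate how the argument is split, and careful tracking of the Jacobian near $\mu=\pm 1$ will be what makes the kernel bound come out with the right scaling for all admissible $q$.
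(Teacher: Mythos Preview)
Your plan has two real gaps.

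First, the claim that \eqref{Eq:ul_e12} follows from \eqref{Eq:ul_e11} by scaling and dyadic interpolation does not work. Scaling only tells you that the index relation $\alpha p=1-p/q+p/\sigma$ is \emph{necessary}; it does not manufacture the finite $L_t^\sigma$ norm from an $L_t^\infty$ bound. A dyadic decomposition in $r$ lets you trade the weight $r^{-\alpha}$ for a change of Lebesgue exponent in $r$, but it cannot produce integrability in $t$ without a second endpoint, and \eqref{Eq:ul_e11} alone does not supply one. In the paper both \eqref{Eq:ul_e11} and \eqref{Eq:ul_e12} are obtained \emph{in parallel} from the same kernel representation: one bounds $r^{(A-1)/2}u$ by integrals with kernels of the form $(1+\mu)^{-\delta}\rho^{-1}$ and $(1-\mu)^{-1}|1+\mu|^{-\delta}\rho^{-1}$, and then invokes the weighted convolution/singular integral propositions of Lindblad--Sogge (their Propositions~2.5, 2.7, 4.4, 4.5), which already contain the $L_t^\sigma L_r^p$ smoothing. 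The $t$--integrability comes from the dispersion encoded in the $\mu$--kernel, not from post-processing \eqref{Eq:ul_e11}.

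Second, ``$u_f$ is analogous to $u_g$'' seriously understates the difficulty. Writing $u_f=\partial_t u_{g=f}$ brings in $I_A'(\mu)$, which by \Le{Le:I_p} has a genuine $(1+\mu)^{-1}$ singularity (not merely logarithmic as for $I_A$). The paper has to isolate a principal-value piece and control it via the $L^q$ boundedness of the Hilbert transform, together with the Hardy--Littlewood maximal inequality and Hardy's inequality for the remaining terms; a H\"older-in-$\rho$ argument of the type you sketch for $u_g$ will not close on this piece. Your caveat about the $u_F$ term is well placed, but the heavier lift is $u_f$, and the route to \eqref{Eq:ul_e12} is through the kernel propositions, not through \eqref{Eq:ul_e11}.
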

We also have a modification result.
\begin{lemma}\label{Le:ul_e2}
Let $u$ be the solution of \eqref{Eq:u_l}. If $1<p\leq (n+1)/(n-1)$, then
\begin{Eq}\label{Eq:ul_e21}
&(T+1)^{\frac{(n-1)p-n-1}{2p}}\kl\|r^{\frac{(A-n)p+n+1}{2p}}u(T,r)\kr\|_{L_r^p(r<T+1)}\\
\lesssim& \|r^{\frac{A+1}{2}}g\|_{L_r^p}+ \|r^{\frac{A-1}{2}}f\|_{L_r^p}+\|r^{\frac{A+1}{2}}F\|_{L_t^pL_r^1(t<T)}.
\end{Eq}
Also, 
if $(n+1)/(n-1)\leq p\leq p_S$, 
then
\begin{Eq}\label{Eq:ul_e22}
&T^{\frac{(n-1)p-n-1}{2p}}\kl\|r^{\frac{(A-n)p+n+1}{2p}}u(T,r)\kr\|_{L_r^p}\\
\lesssim& \|r^{\frac{A+1}{2}+\frac{1}{p}}g\|_{L_r^\infty}+ \|r^{\frac{A-1}{2}+\frac{1}{p}}f\|_{L_r^\infty}+T^{\frac{1}{p}}\|r^{\frac{A+1}{2}}F\|_{L_t^\infty L_r^1(T/4<t<T)}\\
&+\|r^{\frac{A+1}{2}}g\|_{L_r^p}+ \|r^{\frac{A-1}{2}}f\|_{L_r^p}+\|r^{\frac{A+1}{2}}F\|_{L_t^pL_r^1(t<T/4)}.
\end{Eq}
\end{lemma}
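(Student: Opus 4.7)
The plan is to reduce both estimates to the $L^p$ case (taking $q=p$) of \Le{Le:ul_e1}, augmented in the harder range by pointwise interior bounds derived from the representation formula in \Le{Le:u_e}. The starting algebraic observation is the identity
\begin{Eq*}
r^{\frac{(A-n)p+n+1}{2p}}=r^{\frac{A-1}{2}}\cdot r^{-\alpha},\qquad\alpha:=\frac{(n-1)p-n-1}{2p},
\end{Eq*}
which rewrites the left-hand sides of \eqref{Eq:ul_e21} and \eqref{Eq:ul_e22} as $\kl\|(T/r)^{\alpha}r^{(A-1)/2}u(T,r)\kr\|_{L_r^p}$ (with the truncation $r<T+1$ in the first case).

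For Part 1, $p\le(n+1)/(n-1)$ gives $\alpha\le0$, hence $(T/r)^{\alpha}\lesssim1$ on $r<T+1$. The estimate then reduces to $\kl\|r^{(A-1)/2}u(T,r)\kr\|_{L_r^p}\lesssim\cdots$, which is exactly \eqref{Eq:ul_e11} at $q=p$.

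For Part 2, $\alpha\ge0$, and I would split $r$ into the outer region $r\ge T/4$ and the inner region $r<T/4$. On the outer region $(T/r)^{\alpha}\lesssim1$, so the argument of Part 1 applies and produces the $L^p$-type data terms; the forcing norm $\kl\|r^{(A+1)/2}F\kr\|_{L_t^pL_r^1(t<T)}$ supplied by \Le{Le:ul_e1} is further split in time, with the piece $t<T/4$ kept as-is and the piece $T/4<t<T$ bounded by $T^{1/p}\kl\|r^{(A+1)/2}F\kr\|_{L_t^{\infty}L_r^1(T/4<t<T)}$ via H\"older. On the inner region $r<T/4$, I would use the formula of \Le{Le:u_e}: the support property $I_A(\mu)=0$ for $\mu>1$ forces $\rho\in[T-r,T+r]\subset[3T/4,5T/4]$ in the integrals defining $u_f,u_g$, and on this range $\mu$ sweeps through $[-1,1]$ with $d\mu/d\rho\approx r^{-1}$. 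Combining this change of variables with the logarithmic bounds on $I_A$ from \Le{Le:I_p} (so that $\int_{T-r}^{T+r}|I_A(\mu(\rho))|\d\rho\lesssim r$) and the pointwise decay $|g(\rho)|\lesssim\rho^{-(A+1)/2-1/p}\kl\|r^{(A+1)/2+1/p}g\kr\|_{L_r^{\infty}}$ (and similarly for $f$) yields
\begin{Eq*}
r^{\frac{A-1}{2}}|u_f+u_g|(T,r)\lesssim r\cdot T^{-1-\frac{1}{p}}\kl(\kl\|r^{\frac{A-1}{2}+\frac{1}{p}}f\kr\|_{L_r^{\infty}}+\kl\|r^{\frac{A+1}{2}+\frac{1}{p}}g\kr\|_{L_r^{\infty}}\kr).
\end{Eq*}
An analogous pointwise bound, with $T^{-1-1/p}$ replaced by $T^{-1}\kl\|r^{(A+1)/2}F\kr\|_{L_t^{\infty}L_r^1(T/4,T)}$, handles the contribution of $F$ from $s\in(T/4,T)$; the piece $s<T/4$ is handled as in the outer region. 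Raising to the $p$-th power and integrating the weight $(T/r)^{p\alpha}$ over $r\in(0,T/4)$ is then an elementary power integration, finite in the full range $(n+1)/(n-1)\le p\le p_S$, and producing exactly the $T^{-\alpha}$ factor on the right of \eqref{Eq:ul_e22}.

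The main obstacle is the pointwise interior bound for $u_f$: writing $u_f=\partial_t u_{g=f}$ (as in the proof of \Le{Le:u0_e}) enhances the singularity of $I_A$ near $\mu=\pm1$ through \eqref{Eq:I_p_4}, and one must verify that the extra $r^{1/p}$ in $\kl\|r^{(A-1)/2+1/p}f\kr\|_{L_r^{\infty}}$ exactly compensates the resulting boundary loss. The upper threshold $p\le p_S$ will emerge naturally from the requirement that the final power of $r$ in the inner-region integration remain integrable up to $r=0$.
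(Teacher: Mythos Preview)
Your reduction of \eqref{Eq:ul_e21} and of the outer region $r\ge T/4$ in \eqref{Eq:ul_e22} to \eqref{Eq:ul_e11} with $q=p$ is correct and matches the paper's approach exactly.

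The genuine gap is in the inner region $r<T/4$. You assert that ``the support property $I_A(\mu)=0$ for $\mu>1$ forces $\rho\in[T-r,T+r]$''. This is false: the condition $\mu>1$ corresponds to $\rho<r-T$ or $\rho>T+r$, so the support property only gives the upper cutoff $\rho\le T+r$ (and nothing at the bottom, since $r<T$). For $\rho\in(0,T-r)$ one has $\mu<-1$, and by \eqref{Eq:I_p_2}--\eqref{Eq:I_p_3} the kernel $I_A(\mu)$ does \emph{not} vanish there unless $A$ is an odd integer \eqref{Eq:I_p_5}. Since the lemma is stated for general $A\ge 3$, the integrals defining $u_f,u_g$ receive a nontrivial contribution from $\rho\in(0,T-r)$ that your argument ignores entirely. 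Your pointwise bound $r^{(A-1)/2}|u_f+u_g|\lesssim r\,T^{-1-1/p}(\cdots)$ is therefore unjustified.

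The paper handles this by splitting the data, e.g.\ $f=f_0+f_1$ with $f_0=\chi_{[0,T/4]}f$. The piece $u_{f=f_1}$ only sees $\rho\in[T/4,5T/4]$, where your $L^\infty$ extraction works. For $u_{f=f_0}$ one has both $r,\rho<T/4$, hence $|1\pm\mu|\approx T^2/(r\rho)$, and the tail bound $|I_A(\mu)|\lesssim(1-\mu)^{(1-A)/2}\approx(r\rho/T^2)^{(A-1)/2}$ from \eqref{Eq:I_p_2} provides the required smallness; a direct computation then closes the estimate. An analogous trichotomy in $(s,\rho)$ is needed for $u_F$. Finally, your claim that the threshold $p\le p_S$ ``emerges naturally'' from integrability at $r=0$ is not correct: the integrability condition you describe gives a strictly weaker constraint (of order $p<(n+3)/(n-3)$ for $n>3$), and $p\le p_S$ in the lemma is simply the range relevant to the application.
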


\begin{proof}[Proof of \Le{Le:ul_e1}]
When $A\in \SZ_+$, 
this result is exactly the same with the Theorem 4.7 of \cite{MR1408499} 
since we can take $\kappa$ in Theorem 4.7 arbitrary close to $2$. 
So we only deal with the non-integer case.

Here we introduce $\delta< (A-3)/2$ which will be fixed later. 
Firstly we consider $u=u_g$.
Using \Le{Le:u_e} and \Le{Le:I_p}  with $\mu=\frac{r^2+\rho^2-t^2}{2r\rho}$,
we have
\begin{Eq*}
r^{\frac{A-1}{2}}|u_g|\lesssim&
\int_{|t-r|}^{t+r}(1+\mu)^{-\delta}\rho^{\frac{A-1}{2}}|g(\rho)|\d\rho+\int\limits_{0<\rho<t-r\atop -2<\mu<-1}|1+\mu|^{-\delta}\rho^{\frac{A-1}{2}}|g(\rho)|\d\rho\\
&+\int\limits_{0<\rho<t-r\atop \mu<-2}(1-\mu)^{\frac{1-A}{2}}\rho^{\frac{A-1}{2}}|g(\rho)|\d\rho\\
\lesssim&\int_{|t-r|}^{t+r}(1+\mu)^{-\delta}\rho^{-1}|\rho^{\frac{A+1}{2}}g(\rho)|\d\rho\\
&+\int\limits_{0<\rho<t-r}(1-\mu)^{-1}|1+\mu|^{-\delta}\rho^{-1}|\rho^{\frac{A+1}{2}}g(\rho)|\d\rho,
\end{Eq*}
where the second integral does not appear for $t<r$.
Using Proposition 2.7 and Proposition 4.4 in \cite{MR1408499}, we obtain the estimate as we desired.

Next we consider $u=u_f$. Here for simplicity we denote $h(\rho):=\rho^{\frac{A-1}{2}}f(\rho)$.
Using \Le{Le:u_e} and \Le{Le:I_p} again with $\mu=\frac{r^2+\rho^2-t^2}{2r\rho}$,
for $r<t$ we find
\begin{Eq*}
r^{\frac{A-1}{2}}u_f=&\frac{1}{2}h(t+r)-P.V.\int_0^{t+r}\frac{t}{r\rho}I_A'(\mu)h(\rho)\d\rho,\\
r^{\frac{A-1}{2}}|u_f|\lesssim& \frac{1}{2}|h(t+r)|+\kl|P.V.\int\limits_{0\leq\rho\leq t+r\atop -2<\mu<1}\frac{t}{r\rho}(1+\mu)^{-1}h(\rho)\d\rho\kr|\\
&+\int\limits_{0\leq\rho\leq t+r\atop -2<\mu<1}\frac{t}{r\rho}|1+\mu|^{-\delta}|h(\rho)|\d\rho+\int\limits_{0\leq\rho\leq t-r\atop \mu<-2}\frac{t}{r\rho}|1-\mu|^{\frac{-1-A}{2}}|h(\rho)|\d\rho\\
\lesssim&\frac{1}{2}|h(t+r)|+\kl|P.V.\int_{0}^{t+r}\frac{t}{r\rho}(1+\mu)^{-1}h(\rho)\d\rho\kr|\\
&+\int_{0}^{t-r}\frac{t}{r\rho}|1-\mu|^{-1}|h(\rho)|\d\rho+\int_{t-r}^{t+r}\frac{t}{r\rho}|1+\mu|^{-\delta}|h(\rho)|\d\rho\\
&+\int_0^{t-r}\frac{t}{r\rho}(1-\mu)^{-1}|1+\mu|^{-\delta}|h(\rho)|\d\rho\\
\equiv& K_1+K_2+K_3+K_4+K_5.
\end{Eq*}
It's easy to find that
\begin{Eq*}
\|K_1\|_{L_t^\infty L_r^q}\lesssim \|h\|_{L_r^q}.
\end{Eq*}
On the other, we introduce the well known \emph{Hardy-Littlewood} inequality
\begin{Eq*}
\kl\||y|^{-\alpha}f(x\pm y)\kr\|_{L_x^\sigma L_y^p(\SR^2)} \lesssim \|f\|_{L^q(\SR)},
\end{Eq*}
with \eqref{Eq:sigpq_r}. Now, taking $f(x)=|h(x)|\chi_{[0,\infty)}(x)$, we also find
\begin{Eq*}
\|r^{-\alpha}K_1\|_{L_t^\sigma L_r^p}\lesssim \|h\|_{L_r^q},
\end{Eq*}
provided that \eqref{Eq:sigpq_r}.
Meanwhile, adopting Proposition 2.5 in \cite{MR1408499} to deal with $K_4$, and adopting Proposition 4.4 in \cite{MR1408499} for $K_3$ and $K_5$, we find both of them have the same control as $K_1$.  
As for $K_2$, noticing
\begin{Eq*}
\frac{t}{r\rho}(1+\mu)^{-1}=\frac{1}{r+\rho-t}-\frac{1}{t+r+\rho},
\end{Eq*}
we can control $K_2$ by
\begin{Eq*}
K_2\lesssim &\kl|P.V.\int_0^\infty \frac{1}{\rho+r-t}h(\rho)\d\rho\kr|+\int_{t+r}^\infty \frac{1}{\rho+r-t}|h(\rho)|\d\rho\\
&+\frac{1}{t+r}\int_0^{t+r}|h(\rho)|\d\rho\\
\equiv&K_{2,1}(t-r)+K_{2,2}(t,r)+K_{2,3}(t+r).
\end{Eq*}
For $K_{2,1}$, we introduce the estimate of \emph{Hilbert}-transform
\begin{Eq*}
\kl\|P.V.\int \frac{1}{x-y}f(y)\d y\kr\|_{L_x^q}\lesssim \|f\|_{L_x^q}
\end{Eq*}
with $1<q<\infty$. Taking $f(x)=h(x)\chi_{[0,\infty)}(x)$, we find
\begin{Eq*}
\kl\|K_{2,1}(t-r)\kr\|_{L_t^\infty L_r^q}=\kl\|K_{2,1}(r)\kr\|_{L_r^q}\lesssim \|h\|_{L^q}.
\end{Eq*}
Also, using \emph{Hardy-Littlewood} inequality again, we get the dominate of $K_{2,1}(t-r)$ same as that of $K_1$ provided that \eqref{Eq:sigpq_r}. As for $K_{2,3}$, we introduce the \emph{Hardy-Littlewood} maximal inequality
\begin{Eq*}
\kl\|\sup_{y>0}\frac{1}{2y}\int_{x-y}^{x+y} f(z)\d z\kr\|_{L_x^q}\lesssim \|f\|_{L^q}
\end{Eq*}
with $1<q<\infty$. Taking $f(x)=h(x)\chi_{[0,\infty)}(x)$, we find
\begin{Eq*}
\kl\|K_{2,3}(t+r)\kr\|_{L_t^\infty L_r^q}=\kl\|K_{2,3}(r)\kr\|_{L_r^q}\lesssim \|h\|_{L^q}.
\end{Eq*}
Using \emph{Hardy-Littlewood} inequality again, we get the dominate of $K_{2,3}(t+r)$ same as that of $K_1$ provided that \eqref{Eq:sigpq_r}.
Finally, for $K_{2,2}(t,r)$, we have
\begin{Eq*}
K_{2,2}(t,r)=\int_r^\infty \frac{1}{\rho+r}|h(\rho+t)|\d\rho\leq \int_r^\infty \rho^{-1}|h(\rho+t)|\d\rho.
\end{Eq*}
Then, using \emph{Hardy's} inequality we find
\begin{Eq*}
\|K_{2,2}\|_{L_t^\infty L_r^q}\lesssim \|h\|_{L_r^q}.
\end{Eq*}
On the other hand, for any $G(t,r)$ with $\|G\|_{L_t^{\sigma'}L_r^{p'}}\leq 1$ we see
\begin{Eq*}
&\int_0^\infty\int_0^\infty r^{-\alpha}K_{2,2}(t,r)G(t,r)\d r\d t\\
=&\int_0^\infty\int_t^\infty \int_{0}^{\rho-t} \frac{r^{-\alpha}}{\rho+r-t}|h(\rho)|G(t,r)\d r\d\rho\d t\\
\lesssim &\int_0^\infty\int_t^\infty |h(\rho)|\kl\|\frac{r^{-\alpha}}{\rho+r-t} \kr\|_{L_r^p(0,\rho-t)}\|G\|_{L_r^{p'}}\d\rho\d t\\
\approx &\int_0^\infty\int_t^\infty |h(\rho)||\rho-t|^{-\alpha-1+\frac{1}{p}}\|G\|_{L_r^{p'}}\d\rho\d t\\
\lesssim &\kl\|\int_t^\infty |h(\rho)||\rho-t|^{-\alpha-1+\frac{1}{p}}\d\rho\kr\|_{L_t^{\sigma}}\\
\lesssim &\|h\|_{L_r^q},
\end{Eq*}
where in the last step we use the \emph{Hardy-Littlewood} inequality. Now, we find
\begin{Eq*}
\|r^{-\alpha}K_{2,2}\|_{L_t^\sigma L_r^p}\leq\sup\limits_{\|G\|_{L_t^{\sigma'}L_r^{p'}}\leq 1}\kl<r^{-\alpha}K_{2,2},G\kr>\lesssim \|h\|_{L_r^q}.
\end{Eq*}
Mixing these results, we obtain the estimate for $r<t$ part. For $r>t$, we have 
\begin{Eq*}
r^{\frac{A-1}{2}}u_f=&\frac{1}{2}h(t+r)+\frac{1}{2}h(r-t)-\int_{r-t}^{t+r}\frac{t}{r\rho}I_A'(\mu)h(\rho)\d\rho\\
\equiv &K_1'+K_2'+K_3'.
\end{Eq*}
The estimate of $K_1'$ and $K_2'$ is the same as that of $K_1$ in $r<t$ part, and the estimate of $K_3'$ is the same as that of $K_4$. Adding all together, we finish the proof of $u_f$ part.

Finally, we consider $u=u_F$. 
Using \Le{Le:u_e} and \Le{Le:I_p} again with $\mu=\frac{r^2+\rho^2-(t-s)^2}{2r\rho}$,
similarly we have
\begin{Eq*}
r^{\frac{A-1}{2}}u_F=&\int_0^t\int_{0}^{r+t-s}I_A(\mu)\rho^{\frac{A-1}{2}}F(s,\rho)\d\rho\d s\\
r^{\frac{A-1}{2}}|u_F|\lesssim&\int_0^t\int_{|r-t+s|}^{r+t-s}(1+\mu)^{-\delta}\rho^{-1}|G(s,\rho)|\d\rho\d s
\\
&+\iint\limits_{0\leq s\leq t\atop 0\leq \rho\leq t-s-r}(1-\mu)^{-1}|1+\mu|^{-\delta}\rho^{-1}|G(s,\rho)|\d\rho\d s
\end{Eq*}
where $G(s,\rho):=\rho^{\frac{A+1}{2}}F(s,\rho)$ and the second integral does not appear for $t-s<r$. Here we choose $\delta$ small enough. Using Proposition 4.5 in \cite{MR1408499}, we obtain the desired estimate for $u_F$. 
Now, we finish the proof of \Le{Le:ul_e1}.
\end{proof}

\begin{proof}[Proof of \Le{Le:ul_e2}]
The proof of  \Le{Le:ul_e2} is almost the same with that of Theorem 6.4 in \cite{MR1408499}. Thus, we only give a sketch of the proof.  
The estimate \eqref{Eq:ul_e21} and part of estimate \eqref{Eq:ul_e22} are direct consequence of \eqref{Eq:ul_e11} with $q=p$.
We only need to show the estimate of 
$T^{\frac{(n-1)p-n-1}{2p}}\kl\|r^{\frac{(A-n)p+n+1}{2p}}u(T,r)\kr\|_{L_r^p(r<T/4)}$. 
To dominate $u_f$, we separate $f=f_0+f_1$ with $f_0=\chi_{[0,T/4]}f$. 
Then, $u_{f=f_1}$ depends on $f_1(\rho)$ with $T/4<\rho<5T/4$. 
So, we obtain the weight of $T$ by extracting the weight of $\rho$. 
For $u_{f=f_0}$, we find that $\rho,r<T/4$ in the expression of $u_{f=f_0}$ 
where $|1\pm\mu|^{-1}\approx r\rho/T^2$.
Then we get the desired estimate by a direct calculation.

The estimate of $u_g$ is similar to that of $u_f$. Finally for $u_F$, we separate the integral in $u_F$ into three parts: $\{r\geq(T-s)/4\}$, $\{r,\rho\leq(T-s)/4\}$ and $\{r\leq (T-s)/4\leq \rho\}$. Then we get the estimate by a similar discussion.
\end{proof}

\subsection{Long-time existence for $1<p<p_{conf}$}
In this subsection, we will give the proof of \Th{Th:M_2}.
The main process of proof is almost the same as that of \cite[Theorem 5.1, Theorem 6.1 and Theorem 6.3]{MR1408499}.
So we only prove global existence in $p_S<p<p_{conf}$ and long-time existence in $p_m\leq p<p_S$ to show such processes fit our frame.

\setcounter{part0}{0}
\part[Proof of $p_S< p<p_{conf}$]
Similar to the last section, 
we will construct a Cauchy sequence to approach the weak solution. 
We set $u_{-1}=0$ and let $u_{j+1}$ be the solution of the equation \eqref{Eq:uj_o}.

We are going to use the estimate \eqref{Eq:ul_e12}, 
where we set
\begin{Eq*}
q=\frac{2(p-1)}{(n+3)-(n-1)p},\qquad \sigma=pq,\qquad \alpha=\frac{(n-1)p-n-1}{2p}.
\end{Eq*}
Here $p<p_{conf}$ so $0<q<\infty$ and $p>p_S$ so $q>p$. Then, we conclude
\begin{Eq*}
\kl\|r^{\frac{(A-n)p+n+1}{2p}}u_{j+1}\kr\|_{L_t^{pq} L_r^p}\leq& \varepsilon C_0\Psi+C_0\|r^{\frac{(A-n)p+n+1}{2}}|u_{j}|^p\|_{L_t^qL_r^1}\\
\leq& \varepsilon C_0\Psi+C_0\|r^{\frac{(A-n)p+n+1}{2p}}u_{j}\|_{L_t^{pq}L_r^p}^p\\
\Psi:=&\kl\|r^\frac{n+1}{2}U_1\kr\|_{L_r^q}+\|r^{\frac{n-1}{2}}U_0\|_{L_r^q}\\
\end{Eq*}
for some $C_0$ large enough. 
Then, for any $\varepsilon$ satisfies $(2\varepsilon C_0\Psi)^p<\varepsilon \Psi$ we find 
\begin{Eq*}
\kl\|r^{\frac{(A-n)p+n+1}{2p}}u_{j}\kr\|_{L_t^{pq} L_r^p}\leq 2\varepsilon C_0\Psi
\end{Eq*}
holds for any $j\geq 0$. By this result and \eqref{Eq:ul_e12} we also find
\begin{Eq*}
&\kl\|r^{\frac{(A-n)p+n+1}{2p}}\kl(u_{j+1}-u_j\kr)\kr\|_{L_t^{pq} L_r^p}\\
\leq& C_0\|r^{\frac{(A-n)p+n+1}{2}}\kl(|u_{j}|^p-|u_{j-1}|^p\kr)\|_{L_t^qL_r^1}\\
\leq& C_1\|r^{\frac{(A-n)p+n+1}{2p}}(u_{j}-u_{j-1})\|_{L_t^{pq}L_r^p}\max_{k\in\{j,j-1\}}\|r^{\frac{(A-n)p+n+1}{2p}}u_k\|_{L_t^{pq}L_r^p}^{p-1}\\
\leq& C_1(2\varepsilon C_0\Psi)^{p-1}\|r^{\frac{(A-n)p+n+1}{2p}}(u_{j}-u_{j-1})\|_{L_t^{pq}L_r^p}
\end{Eq*}
with some $C_1$ large enough. 
Now, for any $0<\varepsilon$ with $C_1(2\varepsilon C_0\Psi)^{p-1}<1/2$, 
we find $\{u_j\}$ is a Cauchy sequence in its space. 
Set the limit as $u$.
Using \eqref{Eq:ul_e11} we can also find 
$\|r^{\frac{A-1}{2}}u\|_{L_t^\infty L_r^q}\leq 2\varepsilon C_2\Psi$ with some $C_2$. 
To check it is the weak solution of \eqref{Eq:u_o} indeed, 
we need to show \eqref{Eq:u_i_r}.
For any compact set $K_t\times K_r\subset \SR_+^2$, we find
\begin{Eq*}
\kl\|r^{\frac{(A-n)p+n+A}{2}-1}|u|^p\kr\|_{L_{t,r}^1(K_t\times K_r)}\leq& C(K_t)\kl\|r^{\frac{(A-n)p+n+1}{2p}}u\kr\|_{L_t^{pq} L_r^p}^p\kl\|r^{\frac{A-3}{2}}\kr\|_{L_r^{\infty}(K_r)}\\
\leq& (2\varepsilon C_0\Psi)^pC(K_t,K_r)<\infty\\
\|r^{A-1}u\|_{L_{t,r}^1(K_t\times K_r)}\leq &C(K_t)\kl\|r^{\frac{A-1}{2}}u\kr\|_{L_t^\infty L_r^q}\kl\|r^{\frac{A-1}{2}}\kr\|_{L_r^{q'}(K_r)}\\
\leq& (2\varepsilon C_1\Psi)C(K_t,K_r)<\infty.
\end{Eq*}
This finishes the proof.

\part[Proof of $p_m\leq p<p_S$]
Next we consider $(n+1)/(n-1)\leq p< p_S$. Set
\begin{Eq*}
A_j(T):=T^{\frac{(n-1)p-n-1}{2p}}\kl\|r^{\frac{(A-n)p+n+1}{2p}}u_{j}(T,r)\kr\|_{L_r^p},
\end{Eq*}
using \eqref{Eq:ul_e22} we see
\begin{Eq*}
A_{j+1}(T)\leq& \varepsilon C_0\Psi+ C_0T^{\frac{1}{p}}\|r^{\frac{(A-n)p+n+1}{2}}|u_j|^p\|_{L_t^\infty L_r^1(T/4<t<T)}
\\
&+C_0\|r^{\frac{(A-n)p+n+1}{2}}|u_j|^p\|_{L_t^pL_r^1(t<T/4)}\\
\leq& \varepsilon C_0\Psi+ C_0T^{\frac{1}{p}}\|t^{\frac{(1-n)p+n+1}{2p}}A_j(t)\|_{L_t^\infty(T/4<t<T)}^p
\\
&+C_0\|t^{\frac{(1-n)p+n+1}{2p}}A_j(t)\|_{L_t^{p^2}(t<T/4)}^p
\end{Eq*}
for some $C_0$ large enough and 
\begin{Eq*}
\Psi:=\|r^{\frac{n+1}{2}+\frac{1}{p}}U_1\|_{L_r^\infty}+ \|r^{\frac{n-1}{2}+\frac{1}{p}}U_0\|_{L_r^\infty}
&+\|r^{\frac{u+1}{2}}U_1\|_{L_r^p}+ \|r^{\frac{n-1}{2}}U_0\|_{L_r^p}.
\end{Eq*}
When $\sup_{0\leq T\leq T_*}A_j(T)\leq 2\varepsilon C_0\Psi$ holds for some $j$ and $T_*$ defined in \eqref{Eq:Main_6} with $c$ to be fixed later, we find
\begin{Eq*}
\sup_{0\leq T\leq T_*} A_{j+1}(T)\leq&\varepsilon C_0\Psi+ C_1(2\varepsilon C_0 \Psi)^pT_*^{\frac{(1-n)p^2+(n+1)p+2}{2p}}\\
\leq&\varepsilon C_0\Psi+ \varepsilon C_1(2 C_0 \Psi)^p c^{\frac{-h_S}{2p}}
\end{Eq*}
with some $C_1$. Choosing $c$ small enough such that $C_1(2 C_0 \Psi)^p c^{\frac{-h_S}{2p}}\leq C_0\Psi$ and noticing $A_{-1}(T)\equiv 0$, we find such estimate holds for any $j$. A similar manner also shows 
\begin{Eq*}
\sup_{0\leq T\leq T_*} B_{j+1}(T)\leq& \frac{1}{2}\sup_{0\leq T\leq T_*} B_{j}(T),\\
B_{j}(T):=&T^{\frac{(n-1)p-n-1}{2p}}\kl\|r^{\frac{(A-n)p+n+1}{2p}}\kl(u_{j}-u_{j-1}\kr)(T,r)\kr\|_{L_r^p}.
\end{Eq*}
Then, we get the desired solution $u$ as the limit of $\{u_j\}$.
Now we check \eqref{Eq:u_i_r}. For any compact set $K_t\times K_r\subset \SR_+^2$, we find
\begin{Eq*}
\kl\|r^{\frac{(A-n)p+n+A}{2}-1}|u|^p\kr\|_{L_{t,r}^1(K_t\times K_r)}\leq& \kl\|t^{\frac{(n-1)p-n-1}{2p}}r^{\frac{(A-n)p+n+1}{2p}}u\kr\|_{L_t^\infty L_r^p}^p\\
&\times \kl\|t^{\frac{(1-n)p+n+1}{2}}r^{\frac{A-3}{2}}\kr\|_{L_t^1 L_r^\infty(K_t\times K_r)},\\
\|r^{A-1}u\|_{L_{t,r}^1(K_t\times K_r)}\leq &\kl\|t^{\frac{(n-1)p-n-1}{2p}}r^{\frac{(A-n)p+n+1}{2p}}u\kr\|_{L_t^\infty L_r^p}\\
&\times\kl\|t^{\frac{(1-n)p+n+1}{2p}}r^{\frac{(n+A-2)p-n-1}{2p}}\kr\|_{L_t^1 L_r^{p'}(K_t\times K_r)}.
\end{Eq*}
Noticing $\frac{(n+A-2)p-n-1}{2p}>\frac{A-3}{2p}\geq 0$ and $\frac{(1-n)p+n+1}{2}>-1$ due to $p<p_S<p_{conf}$, we find both of the above two terms are finite.
This finishes the proof.

\subsection*{Acknowledgments}
The authors would like to thank the anonymous referee for the careful reading and valuable comments.
The authors were supported by NSFC 11671353 and NSFC 11971428.

\end{document}